\numberwithin{equation}{section}
\newtheorem{theorem}{Theorem}[section]
\newtheorem{lemma}[theorem]{Lemma}
\newtheorem{proposition}[theorem]{Proposition}
\theoremstyle{definition}
\newtheorem{definition}[theorem]{Definition}
\theoremstyle{remark}
\newtheorem{remark}[theorem]{Remark}
\numberwithin{equation}{section}
\begin{document}


\title[Degenerations of Negative K\"ahler-Einstein Surfaces]{Degenerations of Negative K\"ahler-Einstein Surfaces}  

\author[Holly Mandel]{Holly Mandel}
\address[HM]{Department of Mathematics, University of California, Berkeley, \linebreak Berkeley, CA 94720}
\email{holly.mandel@berkeley.edu}

\date{\today}


\keywords{K\"ahler geometry, K\"ahler-Einstein metrics}

\begin{abstract}
Every compact K\"ahler manifold with negative first Chern class admits a unique metric $g$ such that $\text{Ric}(g) = -g$. Understanding how families of these metrics degenerate gives insight into their geometry and is important for understanding the compactification of the moduli space of negative K\"ahler-Einstein metrics. I study a special class of such families in complex dimension two. Following the work of Sun and Zhang (2019) in the Calabi-Yau case, I construct a K\"ahler-Einstein neck region interpolating between canonical metrics on components of the central fiber. This  provides a model for the limiting geometry of metrics in the family.
\end{abstract}

\maketitle

\section{Introduction}
\subsection{K\"ahler-Einstein metrics}
K\"ahler-Einstein metrics sit at the intersection of physics, differential geometry, and algebraic geometry. In physics, they form a class of solutions to the Einstein field equations.  In differential geometry, they are higher-dimensional analogues of constant curvature metrics on Riemann surfaces that are more rigid than constant scalar curvature metrics and less rigid than constant curvature metrics. In algebraic geometry, they are canonical objects associated to certain complex varieties whose properties reflect the underlying algebraic structure.

The existence theory of K\"ahler-Einstein metrics on compact manifolds dates back to Yau's theorem \cite{Yau} but was partially open until 2014. Since the Ricci curvature of a manifold $X$ represents its first Chern class, the existence of a K\"ahler-Einstein metric on $X$ implies that $c_1(X)$ has a representative that is either positive definite, negative definite, or identically zero. Conversely, if $c_1(X) = 0$, the existence of a Ricci flat metric in any K\"ahler class of $X$ follows from Yau's theorem, while if $c_1(X)$ has a negative representative, the existence of a unique (up to rescaling) K\"ahler-Einstein metric with negative scalar curvature was proved by Aubin and Yau in 1978 \cite{Aubin, Yau}. If $c_1(X)$ is positive, however, there are nontrivial obstructions to existence \cite{Fut,Lich,Mat}. A full understanding was not achieved until 2014, when Chen-Donaldson-Sun proved that existence is equivalent to the algebraic condition of K-stability \cite{CDS1,CDS2,CDS3}.

In this paper I study the case $c_1(X) < 0$. I call a K\"ahler-Einstein metric on such a space a \textit{negative K\"ahler-Einstein metric}, since the constant of proportionality between the metric and its Ricci curvature is negative. Though existence and uniqueness in this case have long been established, the proof is implicit and provides little geometric information. It remains a challenge in K\"ahler geometry to characterize these metrics.

One goal is to describe the compactification of the moduli space of negative K\"ahler-Einstein metrics. Since limits in this space will not always be smooth, a key question is how families of such metrics degenerate. For instance, given a manifold $X$, we can vary the complex structure on $X$ to produce a family of K\"ahler-Einstein metrics. If the complex manifold develops a singularity, the limit space does not have a K\"ahler-Einstein metric in the usual sense, but we can try to define a generalization to compactify the family.

A first step in this direction is to characterize the convergence of the family outside a singular set. Previous work has explicitly demonstrated convergence to known metrics. If $X = \bar{X} \setminus D$ for a projective manifold $\bar{X}$ and smooth divisor $D$ such that $c_1(X) < 0$, there exists a unique complete, finite-volume negative K\"ahler-Einstein metric on $X$ \cite{Bando, CY, Kobayashi, TY2}. Tian \cite{Tian} showed that a degenerating family of K\"ahler-Einstein metrics will Gromov-Hausdorff converge to this metric on the smooth locus of the central fiber under the assumptions that the total space of the degeneration is smooth, the central fiber has only normal crossing  singularities, and its components intersect only pairwise. The pairwise intersection assumption was later removed by Ruan \cite{Ruan}. Greater generality was achieved by Song \cite{Song}, who used results from birational geometry to obtain convergence without loss of volume for a general algebraic degeneration and to further characterize the structure of the central fiber. 


I aim to investigate the geometry that collapses to the singular set. This perspective will be necessary for understanding what types of spaces are needed to compactify the moduli space of negative K\"ahler-Einstein metrics. In addition, the techniques I use give an explicit description of the nonsingular metrics close to the central fiber. This allows us to ``see'' the negative K\"ahler-Einstein metrics whose existence has long been established but whose geometry is mostly unknown. 

\subsection{Degenerations of negative K\"ahler-Einstein metrics}
A degeneration of negative K\"ahler-Einstein metrics is defined as a flat family $\pi: \mathcal{X} \rightarrow \Delta$ of algebraic varities over the complex disc $\Delta$ such that $X_t = \pi^{-1}(t)$ is smooth for $t \neq 0$ and $K_{\mathcal{X}/\Delta}$ is positive. For generic $t \in \Delta$, $K_{\mathcal{X}/\Delta}\vert_{X_t} \simeq K_{X_t}$, so there is a unique K\"ahler-Einstein metric $\omega_t$ in $2\pi \, c_1(K_{\mathcal{X}/\Delta}\vert_{X_t})$. 

In this paper we investigate a specific family of the above type. For $i = 1, 2, 3$, let $f_i$ be a homogeneous polynomial in $4$ variables of degree $d_i$ such that $d_1 + d_2 = d_3 > 4$. Let $\mathcal{X} \subseteq \mathbb{C}P^3 \times \Delta$ be the variety \[ X_t = V(f_1 f_2 - t f_3),\] where the $f_i$ are interpreted as polynomials on $\mathbb{C}P^3$ and $t$ is the coordinate on $\Delta$. Say that $Y_i = V(f_i)$ is smooth for $i = 1,2$ and $X_t$ is smooth for $t \neq 0$. Finally, assume that $D = V(f_1) \cap V(f_2)$ and $D \cap V(f_3)$ are complete intersections. By the adjunction formula, $K_{X_t}$ is ample for generic $t$. 

Sun and Zhang \cite{sz19} have characterized the Calabi-Yau case ($d_1 + d_2 = 4$). They found that after rescaling to unit diameter, $X_t$ converges in the Gromov-Hausdorff topology to an interval in $\mathbb{R}$. The interior of the interval reflects the geometry of an infinitesimal neighborhood of the singular point but contains all of the rescaled volume of the space. Meanwhile, rescaled limits at the end points converge to the complete Calabi-Yau metrics on $Y_i \setminus D$ constructed by Tian and Yau \cite{TY}.  An interesting corollary is that these Tian-Yau metrics, though not known to be unique as solutions to a prescribed Ricci curvature problem, are canonical in the sense they arise from degenerating families. 
 
In our case, $Y_i \setminus D$ admits a unique complete K\"ahler-Einstein metric \cite{Bando, CY, Kobayashi, TY2}. We hypothesize that as in \cite{sz19}, $(X_t,\omega_t)$ will degenerate to a space with three parts: one component for $Y_i$, $i=1,2$, equipped with the K\"ahler-Einstein metric on $Y_i \setminus D$, plus a neck region gluing these spaces near $D$. Like the Tian-Yau metric, the K\"ahler-Einstein metric on  $Y_i \setminus D$ resembles a Calabi model space over the normal bundle of $D$ in $Y_i$ near $D$, so a suitable neck region would be a K\"ahler-Einstein interpolation between these two Calabi model spaces. Indeed the bulk of \cite{sz19} is the construction of an analogous neck region in the Calabi-Yau case. 

The main result of this paper is the succesful construction of this neck region. We will give a precise definition of $(\mathcal{C}_{\pm},g_{\mathcal{C}_{\pm}})$ in Section \ref{calabimodel}.

\begin{theorem}\label{mainthm}
Fix a complex curve $D$ with $c_1(D) < 0$ and integers $k_- \geq 0$, $k_+ \leq 0$. Let $(\mathcal{C}_{\pm},g_{\mathcal{C}_{\pm}})$ be the Calabi model space over $k_- \mathcal{N}_D$ and $-k_+ \mathcal{N}_D$, respectively. Then there exists $\alpha_0 \in (0,1)$, a manifold $\mathcal{M}$ with boundary components $\partial \mathcal{M}_\pm$ that gives a singular $S^1$ fibration over $D \times I$ for some interval $I \subset \mathbb{R}$, and a family of $S^1$-invariant negative K\"ahler-Einstein metrics $\omega_{\text{KE},T}$ on $\mathcal{M}$, such that the following holds: for any $\alpha \in (0,\alpha_0)$, $\epsilon > 0$, $k \in \mathbb{Z}_{\geq 0}$, and $R > 0$, $B_R(\partial \mathcal{M}_{-})$ is $\epsilon$-close in $C^{k,\alpha}$ to a ball in $\mathcal{C}_-$, and similarly for $\partial \mathcal{M}_{+}$, for $T \gg 0$. 
\end{theorem}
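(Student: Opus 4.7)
The plan is to adapt the construction of Sun and Zhang \cite{sz19} from the Calabi-Yau case to the negative K\"ahler-Einstein equation $\operatorname{Ric}(\omega) = -\omega$. The $S^1$-symmetry reduces the problem to a scalar nonlinear PDE on the three-dimensional base $D \times I$, which I first solve approximately by gluing the two Calabi model potentials and then correct to an exact solution via a quantitative implicit function theorem in weighted H\"older spaces.

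Concretely, an $S^1$-invariant K\"ahler metric on $\mathcal{M}$ is encoded, via the moment map $s : \mathcal{M} \to I$, by a single real potential $\phi$ on $D \times I$. The Calabi model spaces $(\mathcal{C}_\pm, g_{\mathcal{C}_\pm})$ correspond to explicit potentials $\phi_{\mathcal{C}_\pm}$ that are asymptotic to a common cylindrical profile $\phi_{\mathrm{mid}}$ as one moves away from the singular fiber. In these coordinates the equation $\operatorname{Ric}(\omega) = -\omega$ becomes a nonlinear elliptic PDE of Monge-Amp\`ere type for $\phi$ on the interior of $D \times I$, degenerate at the endpoints of $I$ where the $S^1$-orbits collapse. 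For each large $T > 0$, I set $I = [-T, T]$ and form
\[
\phi_T^{\mathrm{app}} := \chi_-(s)\,\phi_{\mathcal{C}_-} + \chi_+(s)\,\phi_{\mathcal{C}_+} + (1 - \chi_- - \chi_+)\,\phi_{\mathrm{mid}},
\]
where $\chi_\pm$ are cutoffs supported near $s = \mp T$. Because the interpolation is performed in the region where both $\phi_{\mathcal{C}_\pm}$ are already close to $\phi_{\mathrm{mid}}$, the K\"ahler-Einstein error $E_T := F(\phi_T^{\mathrm{app}})$ is exponentially small in $T$.

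To produce an exact solution, I study the linearization $L_T$ of $F$ at $\phi_T^{\mathrm{app}}$. On the model infinite cylinder $D \times \mathbb{R}$ a separation of variables using the eigendecomposition of $-\Delta_D + 1$ (whose spectrum avoids zero because $c_1(D) < 0$) produces a discrete family of indicial roots; choosing an exponential weight $\delta$ strictly between two consecutive roots, the model operator is invertible between weighted H\"older spaces. A Schauder-type patching argument then yields uniform bounds $\|L_T^{-1}\|_{C^{k,\alpha}_\delta \to C^{k+2,\alpha}_\delta} \leq C$ independent of $T$, and the equation $F(\phi_T^{\mathrm{app}} + \psi) = 0$ becomes a fixed-point problem $\psi = -L_T^{-1}(E_T + Q(\psi))$, with $Q$ the quadratic-and-higher nonlinearity. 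A contraction mapping on a small ball in the weighted $C^{2,\alpha}_\delta$-norm produces $\psi_T$ whose norm is controlled by $\|E_T\|$ and hence tends to zero as $T \to \infty$. Consequently $\omega_{\mathrm{KE},T}$ agrees with $g_{\mathcal{C}_\pm}$ up to a $C^{k,\alpha}$-error on any fixed ball $B_R(\partial \mathcal{M}_\pm)$ that vanishes in the limit, giving the conclusion.

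I expect the main obstacle to lie in the uniform linear theory on the long neck. Because $\mathcal{M} \to D \times I$ is a \emph{singular} $S^1$-fibration, the weighted H\"older spaces must simultaneously accommodate the cylindrical degeneration as $T \to \infty$ and the non-product edge behavior near the fixed locus of the $S^1$-action at the endpoints of $I$. Verifying that no indicial root of the limiting operator coincides with the chosen decay rate, and that the $+\phi$ term in the Monge-Amp\`ere equation suffices to rule out a slowly decaying zero mode coming from the constant functions on $D$, are the delicate analytic points that must be checked carefully; these are the analogues of the hardest estimates in \cite{sz19}, and in the negative-curvature setting the spectral gap from $c_1(D) < 0$ should make them tractable.
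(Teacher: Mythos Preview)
Your proposal misses the central topological and geometric mechanism of the construction, and as written the gluing step cannot succeed.

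The two Calabi model spaces $\mathcal{C}_\pm$ are $S^1$-bundles over $D$ of \emph{different} degrees $k_-$ and $-k_+$. You therefore cannot interpolate between them by patching potentials $\phi_{\mathcal{C}_\pm}$ with cutoffs on a fixed manifold: there is no common underlying $\mathcal{M}$ on which both potentials live, and the formula $\phi_T^{\mathrm{app}} = \chi_-\phi_{\mathcal{C}_-} + \chi_+\phi_{\mathcal{C}_+} + \cdots$ has no meaning until the topology change is accounted for. The paper handles this by inserting $k_- - k_+$ isolated singular fibers in the \emph{interior} of the neck (at $z=0$), each modeled on the Taub-NUT space, where the $S^1$-action acquires a fixed point and the bundle degree jumps by one. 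Concretely this is implemented by adding a delta-function source $2\pi\delta_p$ to the linearized equation for the fiber-size function $\delta h$ (Equation~\ref{deltah_eqn}); the resulting Green's-function singularity is exactly what, after compactification via the Hopf fibration, produces a smooth point of $\mathcal{M}$ where the circle collapses.

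Relatedly, you have the location of the degeneration backwards: the $S^1$-orbits do \emph{not} collapse at the endpoints of $I$ (there the metric matches the tubular boundary of the Calabi model), but at the interior singular points just described. This forces the weighted analysis to handle \emph{four} regimes rather than two---Taub-NUT near the singular fibers, $\mathbb{C}\times\mathbb{R}$ and $D\times\mathbb{R}$ in intermediate annuli, and the Calabi model near $\partial\mathcal{M}$---and the weight function must carry both a $(1+w)^{-\delta}$ factor for the long cylindrical direction and a $W^{\nu}$ factor encoding distance to the singular fibers. Your one-parameter exponential weight on a model cylinder is not enough. Finally, the paper does not build $\phi_T^{\mathrm{app}}$ by cutoffs at all: it takes the ansatz $h = (z^2+T^{-2})^{-1} + \delta h$, solves the linearized equation globally (via hypergeometric ODEs after separation of variables), and only then corrects the resulting K\"ahler metric to Einstein by the implicit function theorem with linearization $\Delta_{\omega_T}-1$.
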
  
Our proof yields a detailed description of the geometry of $(\mathcal{M},\omega_{\text{KE},T})$ for large $T$.  The diameter of $(\mathcal{M},\omega_{\text{KE},T})$ grows without bound as $T \rightarrow \infty$, and if we rescale the diameter to a constant, the resulting spaces collapse to an interval in $\mathbb{R}$. We can say more, however, about the pointed convergence of $(\mathcal{M},\omega_{\text{KE},T})$. 
\begin{theorem}\label{submainthm}
Under the assumptions of Theorem \ref{mainthm}, there exists a family of functions $W_T: \mathcal{M} \rightarrow \mathbb{R}_{>0}$ with the following property: Let $(x_j)_{j=1}^{\infty}$ be a sequence of points in $\mathcal{M}$ and choose a sequence $T_j \rightarrow \infty$. Then there exists a subsequence of $(\mathcal{M},W_{T_j}(x_j)^{-2}\omega_{\text{KE},T_j},x_j)$ that converges in the pointed Gromov-Hausdorff topology to one of the following:
\begin{enumerate}
\item the Taub-NUT space $(\mathbb{C}^2_{TN},g_{TN})$,
\item the Riemannian product $\mathbb{C} \times \mathbb{R}$,
\item the Riemannian cylinder $D \times \mathbb{R}$,
\item the Calabi model space $(\mathcal{C}_{\pm},g_{\mathcal{C}_{\pm}})$. 
\end{enumerate}
In cases 1 and 4, convergence is smooth. In cases 2 and 3, there is collapsing with bounded curvature away from finitely many points.
\end{theorem}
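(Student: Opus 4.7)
My plan is to build $W_T$ as a regularity-scale function for $\omega_{\text{KE},T}$, read off directly from the construction in Theorem \ref{mainthm}. Concretely, I will take $W_T(x)$ to be (up to a uniform constant) the minimum of the local transverse injectivity radius, the curvature scale $|\mathrm{Rm}(\omega_{\text{KE},T})|(x)^{-1/2}$, and a cutoff that controls the growth of $W_T$ as $x$ approaches $\partial \mathcal{M}_\pm$. The rescaled metric $W_{T_j}(x_j)^{-2}\omega_{\text{KE},T_j}$ then has bounded curvature on balls of bounded radius about $x_j$, and its Einstein constant equals $-W_{T_j}(x_j)^2$, which tends to $0$ unless $W_{T_j}(x_j)$ stays bounded below. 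Hence any pointed Gromov-Hausdorff subsequential limit is either a Ricci-flat K\"ahler $4$-manifold or a Cheeger-Fukaya-Gromov collapse of one with bounded curvature off a finite set.

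The first step is to record three scale ratios along the sequence $(x_j,T_j)$: (i) the $S^1$-orbit length $\ell_j$ through $x_j$ divided by $W_{T_j}(x_j)$; (ii) the rescaled distance of $x_j$ to $\partial \mathcal{M}_\pm$; and (iii) the rescaled distance of $x_j$ to the finite singular locus of the $S^1$-fibration over $D\times I$ produced by Theorem \ref{mainthm}. After passing to a subsequence so each ratio has a limit in $[0,\infty]$, I match to one of the four model geometries. If $W_{T_j}(x_j)$ is bounded below and (ii) is bounded, Theorem \ref{mainthm} yields case $(4)$ directly on exhausting $R$. If (iii) is bounded and $\ell_j/W_{T_j}(x_j)$ stays bounded below, equivariant Cheeger-Gromov compactness produces an $S^1$-invariant Ricci-flat K\"ahler limit with a single fixed point; the Gibbons-Hawking ansatz together with the uniqueness of ALF hyperk\"ahler metrics having a single NUT identifies this limit with $(\mathbb{C}^2_{TN},g_{TN})$, which is case $(1)$. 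When $\ell_j/W_{T_j}(x_j)\to 0$, the $S^1$-fibers collapse: if the rescaled diameter of $D$ stays bounded, equivariant collapsing theory recovers case $(3)$; if it diverges, $D$ looks flat at the relevant scale and one obtains $\mathbb{C}\times\mathbb{R}$, i.e.\ case $(2)$.

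The main obstacle is case $(1)$: ruling out that the Ricci-flat limit at a singular fiber is some other ALE or ALF gravitational instanton (flat $\mathbb{C}^2/\Gamma$, multi-Taub-NUT with several centers, Euclidean Schwarzschild type, etc.). This is exactly where the explicit construction in Theorem \ref{mainthm} is needed: it guarantees that every singular fiber is isolated with the standard $S^1$-fixed-point model, that the rescaled K\"ahler-Einstein metric at fixed scale is $C^{k,\alpha}$-close to a single-NUT local model, and that the large-radius asymptotics match a Calabi model space over $\mathcal{N}_D$. These inputs determine both the ALF group and the fundamental group at infinity, after which the classification of ALF hyperk\"ahler $4$-manifolds with a single NUT forces the limit to be Taub-NUT. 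The bounded-curvature statement away from finitely many points in cases $(2)$ and $(3)$ follows the collapsing analysis in \cite{sz19} applied to the present setting, combined with the concentration of curvature at singular fibers established in the proof of Theorem \ref{mainthm}.
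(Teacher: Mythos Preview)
Your outline takes a genuinely different route from the paper, and while it could likely be completed, it is heavier and in one place redundant. The paper does not define $W_T$ as an abstract regularity scale; it is the explicit function of Equation \eqref{Wdef}, essentially $W=r_w$ near the singular fiber, capped below by $T^{-1}$ and above by $1$. The four limits are then obtained for the \emph{approximate} metric $\omega_T$ by direct computation in coordinates (Proposition \ref{rescaledGeo}): in each regime one writes out $\omega_T$ using the expansions \eqref{deltah_expansion_singularpt}, \eqref{chi_expansion}, \eqref{h_expression_6}, \eqref{chi_expression_6} and reads off the limiting metric as an explicit formula. No equivariant Cheeger--Gromov compactness, no Cheeger--Fukaya--Gromov collapsing theory, and no classification of ALF instantons is used. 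The proof of Theorem \ref{submainthm} is then very short: the K\"ahler--Einstein metric is $\omega_T+i\partial\bar\partial u$ with $\|u\|_{S_1}$ controlled by \eqref{solutionbound}, so $W(x_j)^{-2}i\partial\bar\partial u=o(1)$ pointwise and the Gromov--Hausdorff limits of $\omega_T$ persist for $\omega_{\text{KE},T}$; smooth convergence in the noncollapsed cases is then Anderson--Colding (Theorem \ref{einsteinRegularity}), and bounded curvature in the collapsed cases comes from the $C^\infty$ convergence on local universal covers already established in Proposition \ref{rescaledGeo}.

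Your treatment of Case 1 is where the redundancy shows. You invoke the explicit construction to guarantee that the rescaled metric is $C^{k,\alpha}$-close to a single-NUT local model, and then appeal to the classification of ALF hyperk\"ahler metrics to name the limit. But the first statement already identifies the limit as Taub--NUT; the classification adds nothing. Similarly, in Cases 2 and 3 the paper replaces the abstract collapsing machinery by the concrete observation that $h^{-1}\to 0$ while the base metric converges smoothly on local universal covers. Your approach would work, but it imports deep theorems to recover information that the explicit approximate solution already hands you for free; the paper's route is both shorter and more elementary.
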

The rescaling factor $W_T$ is related to the local regularity scales of $\omega_{\text{KE},T}$. In cases $2$ and $3$, there is smooth convergence without collapsing on local universal covers. Thus the theorem provides an explicit pointwise description of $\omega_{\text{KE},T}$ up to error terms that decay as $T \rightarrow \infty$. 

To construct $(\mathcal{M},\omega_{\text{KE},T})$, we guess that the desired metric can be approximated by a K\"ahler metric with $S^1$ symmetry on a singular $S^1$ fibration over $D \times I$ for some interval $I \subset \mathbb{R}$. The constraints on the end behavior of the metric determine the topology of $\mathcal{M}$. The K\"ahler-Einstein equation on $\mathcal{M}$ can then be expressed in terms of the fiber size $h^{-1}$ and a scaling $\chi$ of a fixed metric on $D$ (Section \ref{kahler_reduction}). We solve the linearization of this reduced equation to construct a family of approximately K\"ahler-Einstein metrics $\omega_T$ (Sections \ref{solution} and \ref{m_construction}). By adding an inhomogeneous delta function term to the linearized equation, we change the degree of the restriction of the $S^1$ bundle to $D$ to match the Calabi model spaces at either end of $\mathcal{M}$. Once we have solved the inhomogeneous linearized equation, we investigate the local rescaled geometry of $\omega_T$ as $T \rightarrow \infty$ (Section \ref{schauderestimatesection}).  The resulting description, reflected in Theorem \ref{submainthm}, allows us to derive weighted Schauder estimates that are independent of $T$. Finally, we use these estimates to correct $\omega_T$ to a K\"ahler-Einstein metric $\omega_{\text{KE},T}$ by the implicit function theorem (Section \ref{perturbation}). 

Our techniques follow \cite{sz19}, but major differences from the Calabi-Yau case appear in Sections \ref{solution}, \ref{schauderestimatesection}, and \ref{perturbation}. These differences result from the fact that the linearization of the negative K\"ahler-Einstein equation is $\Delta-1$ rather than $\Delta$. In Section \ref{solution}, this implies that the linearization of the K\"ahler-Einstein equation results (after separation of variables) in an ordinary differential equation whose solutions are qualitatively different from the exponential functions in \cite{sz19}. In Section \ref{perturbation}, this requires us to adopt a different framework for our use of the implicit function theorem. As a result, we must derive different Schauder estimates in Section \ref{schauderestimatesection}.

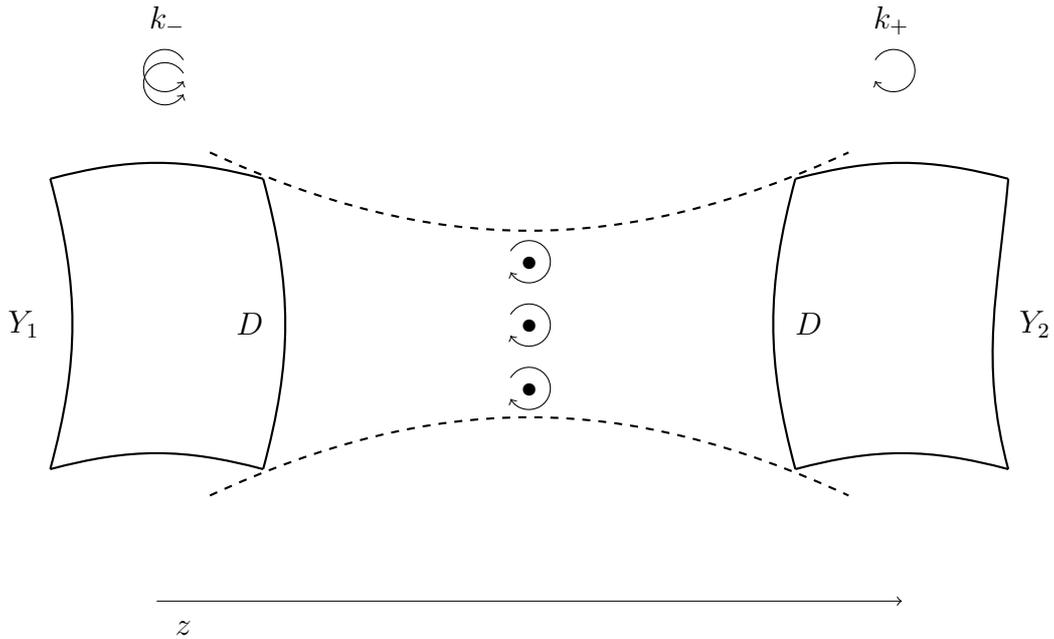
\begin{figure}[h]
\begin{center}
\begin{tikzpicture}[scale=.7]

\draw[thick] (-9, 5.5) to [out = 15, in = 165](-5, 5.5); 
\draw[thick] (-9, 0) to [out = 15, in = 165](-5, 0);
\draw[thick] (-9, 5.5) to [out = 285, in = 75](-9,0);  
\draw[black,  thick] (-5, 5.5) to [out = 285, in = 75](-5, 0); 
\node[black] at (-5.25,2.75) {$D$};
\node at (-9.5,2.75) {$Y_1$};

\draw[thick, dashed] (-6,6)  to [out = 335, in = 205] (6,6);  
\draw[thick, dashed] (-6,-0.5)  to [out = 25, in = 155] (6,-0.5); 

\draw[thick] (9, 5.5) to [out = 165, in = 15](5, 5.5); 
\draw[thick] (9, 0) to [out = 165, in = 15](5, 0);
\draw[thick] (9, 5.5) to [out = 265, in = 105](9, 0);  
\draw[black,  thick] (5, 5.5) to [out = 255, in = 105](5, 0); 
\node[black] at (5.25,2.75) {$D$};
\node at (9.5,2.75) {$Y_2$};

\node[black] at (0, 3.9) {$\bullet$};
\draw[black, <-] (-.35,3.73) arc (-150:150:.4);

\node[black] at (0, 2.7) {$\bullet$};
\draw[black, <-] (-.35,2.53) arc (-150:150:.4);

\node[black] at (0, 1.5) {$\bullet$};
\draw[black, <-] (-.35,1.33) arc (-150:150:.4);

\draw[->] (-6.5,7.75) arc (30:330:.4);
\draw[->] (-6.5,7.5) arc (30:330:.4);

\draw[<-] (6.5,7.35) arc (-150:150:.4);


%

\node at (-6.8,8.5) {$k_-$};
\node at (6.8,8.5) {$k_+$};

\draw[->] (-7,-2.5) to (7,-2.5);
\node at (-6.5,-3) {$z$};

\end{tikzpicture}
\end{center}
\caption[The construction of $(\mathcal{M},\omega_{\text{KE},T})$]{The construction of $(\mathcal{M},\omega_{\text{KE},T})$. The base space is the manifold $D \times I_z$. The diameter of $D$ is bounded below independently of $T$ and blows up near the singular points. $\mathcal{M}$ is a singular $S^1$ fibration over $D \times I$. The size of the $S^1$ fiber, given by $h^{-1}$, decreases to $0$ near the singular points. The degree of the restriction of the $S^1$ fibration to $D$ is given by $k_-$ for $z < 0$ and $-k_+$ for $z > 0$ and changes at $z = 0$ because there are $k_--k_+$ singular points.}
 \label{f: construction}
\end{figure}

\section{K\"ahler Reduction}\label{kahler_reduction}
\subsection{K\"ahler metrics with Hamiltonian symmetry} Our first step toward the construction of $(\mathcal{M},\omega_{\text{KE},T})$ is to create an approximately K\"ahler-Einstein space with the desired end behavior. We attempt to build such a space under the added assumption of $S^1$ symmetry. In the Calabi-Yau case this is the Gibbons-Hawkings ansatz. The discussion in this section is based on Section 2.1 in \cite{sz19} .

First, let's assume we already have such a space and see how our calculations are simplified. Let $(X,\omega,J)$ be a K\"ahler manifold of complex dimension $n$. We say that $X$ has a holomorphic $S^1$ symmetry if there is an action $\varphi: S^1 \times X \rightarrow X$ such that  $\varphi_\theta$ is holomorphic and $\varphi_\theta^{\ast}\omega = \omega$ for each $\theta \in S^1$, where $\varphi_{\theta}(x) = \varphi(\theta,x)$. In this case let $\xi$ be the vector field generating the action, so 
\[ \frac{\partial}{\partial \theta}\varphi(\theta,x) \bigg\vert_{\theta = \theta_0} = \xi(x).\] We say that $X$ is Hamiltonian if there exists a function $z$ such that 
\[dz = i_{\xi} \omega.\] 

If the holomorphic Hamiltonian $S^1$ action on $X$ is also free, we can simplify the description of $\omega$ and $J$ by dividing out the $S^1$ symmetry. Locally we can quotient by the orbits of the complexified action, generated by $\xi^{1,0} = \xi - i J \xi$, to form an $(n-1)$ dimension complex manifold $D$. Then we can identify $X$ with an $S^1$ bundle over $D \times I$ for some interval $I \subset \mathbb{R}$ with coordinate $z$. Since $\omega$ is $S^1$ invariant it can be parameterized by the base space $D \times I$. 

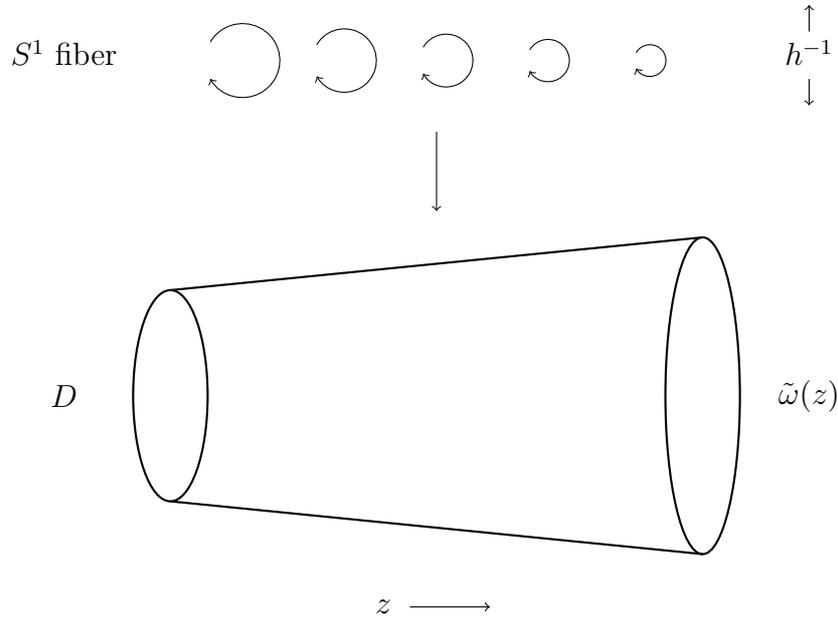
\begin{figure}[h]
\begin{center}
\begin{tikzpicture}[scale=0.7]
\draw[thick] (-5,0) ellipse (.7cm and 2cm);
\draw[thick] (5,0) ellipse (.7cm and 3cm);
\draw[thick] (-5,2) to (5,3);
\draw[thick] (-5,-2) to (5,-3);

\draw[<-] (-4.25,6) arc (-150:150:.7);
\draw[<-] (-2.25,6.05) arc (-150:150:.6);
\draw[<-] (-0.25,6.1) arc (-150:150:.5);
\draw[<-] (1.75,6.15) arc (-150:150:.4);
\draw[<-] (3.75,6.2) arc (-150:150:.3);

\draw[->] (0,5) to (0,3.5);

\node[black] at (-7,0){$D$};
\node[black] at (-7,6.5){$S^1$ fiber};

\node[black] at (7,6.5){$h^{-1}$};
\draw[->] (7,6.9) to (7,7.4);
\draw[->] (7,6.0) to (7,5.5);

\node[black] at (7,0){$\tilde{\omega}(z)$};

\node[black] at (-1,-4){$z$};
\draw[->] (-.5,-4) to (1,-4);
\end{tikzpicture}
\end{center}
\caption[K\"ahler reduction]{K\"ahler reduction. Locally, the total space is decomposed into an $S^1$ fibration over $D \times I_z$. The $S^1$-invariant metric on $X$ is specified by the fiber size, given by $h^{-1}$, and the $z$-family $\tilde{\omega}(z)$ of metrics on $D$.}
 \label{f: construction}
\end{figure}

Let $y$ be a local holomorphic coordinate on $D$ with the convention that $J dy = -i \, dy$, $J d\bar{y} = i \, d\bar{y}$. Let $t$ be a function such that $\xi(t) = 1$. Then $y,\overline{y},t,z$ form a local coordinate system for $X$, but the coordinates $z$ and $t$ are not holomorphic. We write
\[ J dz = h^{-1} (-dt + \theta)\]
where $h$ is a function and $\theta$ is an $S^1$-invariant one-form without a $dt$ component. We can determine $h$ by observing that by our choice of $z$,
\[ J dz(\partial_t) = - dz(J \partial_t) = - \omega(\xi, J \xi) = - \Vert \xi \Vert^{2}.\] 
Therefore $h =  \Vert \xi \Vert^{-2}$. 

Write $\Theta = -dt + \theta$ and define \begin{equation}\label{omega_defn}
\tilde{\omega} = \omega - dz \wedge \Theta.
\end{equation} The form $\tilde{\omega}$ does not have components in $dz$ or $dt$.  In addition,
\[ \mathcal{L}_{\xi} \tilde{\omega} = \mathcal{L}_{\xi} \omega - \mathcal{L}_{\xi} (dz \wedge \Theta). \]
The first term on the right vanishes due to $S^1$ invariance, and Cartan's formula can be used to show that $\mathcal{L}_{\xi} (dz \wedge \Theta) =  0$ as well. Thus $\tilde{\omega}$ can be thought of as a $z$-family of $(1,1)$-forms on $D$.

The integrability of $J$ and the K\"ahler condition on $\omega$ imply (see \cite{sz19} 2.1) that $h$ and $\tilde{\omega}$ satisfy the system
\begin{equation}\label{kahlerCondition} \bigg\lbrace \begin{array}{l} \partial_z^2\tilde{\omega} + d_D d_D^c h  =  0 \\ \partial_z \tilde{\omega} - dz \wedge d_D^c h  =  d\Theta.  \end{array} 
\end{equation}

\subsection{The reduced K\"ahler-Einstein equation}\label{derivation_of_system}
\subsubsection{From $X$ to $(\chi,h)$} We now assume that $n = 2$ and $D$ is a Riemann surface of genus at least two. Let $\omega_D$ be a K\"ahler-Einstein metric on $D$ normalized so that $\text{Ric} \, \omega_D = -\omega_D$. For the remainder of this paper, we assume that all K\"ahler-Einsten metrics are negative and normalized this way.  Because $D$ has complex dimension one, we can write $\tilde{\omega} = \chi \omega_D$ for some function $\chi$ on $D \times I$. Our goal is to reduce the equation $\text{Ric }\omega = -\omega$ to a simpler collection of equations on $\chi$ and $h$.

Let $\Omega$ be a local $S^1$-invariant holomorphic volume form on $X$ and let $\kappa = i(h dz + i\Theta + \kappa')$ be the $(1,0)$ form dual to $\xi^{1,0}$, where $\kappa'$ does not have a $dz$ or $dt$ component. We can write
\[ \Omega = \kappa \wedge \tilde{\Omega}\]
for $\tilde{\Omega} = i_{\xi^{1,0}}\Omega$ also $S^1$ invariant and compute that
\[ \Omega \wedge \bar{\Omega} = -2 i h \, dz \wedge dt \wedge \tilde{\Omega} \wedge \bar{\tilde{\Omega}}.\]
There are functions $\chi$ and $\sigma$ such that
\[ \omega_D = \sigma \, i \tilde{\Omega} \wedge \bar{\tilde{\Omega}} \ \ \ \ \ \ \ \tilde{\omega} = \chi \, \omega_D,\]
so
\begin{equation}\label{hexpression} \frac{\omega^2}{\Omega \wedge \bar{\Omega}} =  \frac{\tilde{\omega}}{ih \tilde{\Omega} \wedge \tilde{\bar{\Omega}}} = \frac{\chi \sigma}{h}, 
\end{equation}
and so
\[ \text{Ric}\, \omega = -i \partial \bar{\partial} \log \text{det}\frac{\omega^2}{\Omega \wedge \bar{\Omega}} = -i\partial \bar{\partial} (\log(\chi) + \log(\sigma) - \log(h)).\]
Thus the assumption that $\omega$ is K\"ahler-Einstein gives that
\begin{equation}\label{omegaExpansion}
-\omega =  -\frac{1}{2}dd^c \log \chi - i \partial_D \bar{\partial}_D \log \sigma +  \frac{1}{2}d d^c \log h.
\end{equation} 
For clarity we have replaced $i\partial \bar{\partial}$ with $\frac{1}{2} dd^c$ for functions that vary in $z$ and $i\partial_D \bar{\partial}_D$ for functions that do not.

We expand out this relation and separate into components to derive four equations relating $\chi$ and $h$. On the one hand, 
\[ - \omega = -(dz \wedge (-dt + \theta) + \tilde{\omega}) .\]
On the other hand, for any $t$-invariant function $F$ on $X$ we have
\begin{align*}
dd^cF &= dJ(F_z dz + d_D F) \\
&=  d(F_z h^{-1}(-dt + \theta) + d_D^c F) \\
&=  (F_z h^{-1})_z dz \wedge (-dt + \theta)  + d_D(F_z h^{-1}) \wedge (-dt + \theta) \\ &\ \ \ \ \ \ \ \ + (F_z h^{-1}) (dz \wedge \partial_z\theta_z + d_D \theta)  + dz\wedge d_D^c (F_z) + d_D d_D^c F \\ 
&=  (F_z h^{-1})_z dz \wedge (-dt + \theta)  + d_D(F_z h^{-1}) \wedge (-dt + \theta) \\ &\ \ \ \ \ \ \ \ + (F_z h^{-1}) (dz \wedge -d_D^c h + \partial_z \tilde{\omega})  + dz\wedge d_D^c (F_z) + d_D d_D^c F.
\end{align*}
Let $F = \log h - \log \chi$. Then Equation \ref{omegaExpansion} becomes
\begin{align*}-2(dz \wedge (-dt + \theta) + \tilde{\omega}) &= ((\log h - \log \chi)_z h^{-1})_z dz \wedge (-dt + \theta)   \\ &\ \ \ \ \ \ \ \ + d_D((\log h - \log \chi)_z h^{-1})\wedge (-dt + \theta) \\ &\ \ \ \ \ \ \ \  + (\log h - \log \chi)_z h^{-1} (dz \wedge -d_D^c h + \partial_z \tilde{\omega}) \\ &\ \ \ \ \ \ \ \ + dz\wedge d_D^c ((\log h - \log \chi)_z) \\ &\ \ \ \ \ \ \ \ + d_D d_D^c (\log h-\log \chi)  - 2 i \partial_D \bar{\partial}_D \log \sigma. 
\end{align*}
The covector fields $dz, \ dt, \ dy$ and $d\bar{y}$ form a local basis for forms on $X$. Collecting components involving only $dz \wedge dt$ yields
\begin{equation}\label{dzdt} 
((\log h-\log \chi)_z h^{-1})_z  = -2.
\end{equation}
Since $\theta$ does not have a $dt$ component, collecting components involving only $dt$ wedged with either $dy$ or $d\bar{y}$ yields
\begin{equation}\label{dtdw} 
d_D((\log h-\log \chi)_z h^{-1}) = 0.
\end{equation}
Collecting terms in only $dy$ and $d\bar{y}$ yields 
\begin{equation}\label{dwdwbar}
(\log h-\log \chi)_z h^{-1} \partial_z \tilde{\omega} + d_D d_D^c (\log h-\log \chi) - 2i \partial_D \bar{\partial}_D \sigma   =  - 2\tilde{\omega}
\end{equation}
Finally, cancelling all of these terms and ``dividing'' by $dz$, we have
\begin{equation}\label{leftover}
-(\log h-\log \chi)_z h^{-1} \,  d_D^c h + d_D^c (\log h-\log \chi)_z = 0,
\end{equation}
but this is trivial by Equation \ref{dtdw}.

\subsubsection{From $(\chi,h)$ to $X$}\label{backwards} The key point is that we can also work backwards from these equations. Fix $(D, \omega_D)$ a complex curve with $\text{Ric } \omega_D = -\omega_D$. For the remainder of this paper we will assume that $k_- = 0$ and $k_+ = -1$. We will discuss this simplification further in Section \ref{hopf_identification}. 

Fix a point $p_D \in D$ and let $\delta_p$ be a delta function at $(p_D,0) \in D \times [-1,1/2]$, i.e. 
\begin{equation}\label{delta_function_defn}
\int_{D \times [-1,1/2]} f\, \delta_p \, dz \wedge \omega_D = f(p_D,0).
\end{equation}
Since we are free to add a constant to the moment map $z$, it is sufficient to find a pair $(\chi, h)$ solving the following equations: 
\begin{equation}\label{maineqn1}
(\log h - \log \chi)_z = -2hz
\end{equation} 
\begin{equation}\label{maineqn2}
\partial_z^2 \chi + \Delta_D  h = 2 \pi \delta_p
\end{equation} 
\begin{equation}\label{maineqn3}
\tilde{\omega}-(\omega_D + z \partial_z \tilde{\omega}) =  -\frac{1}{2} d_D d_D^c (\log h - \log \chi).
\end{equation} 
Equation \ref{maineqn1} comes from integrating Equations \ref{dzdt} and \ref{dtdw}. Equation \ref{maineqn2} is part of Equation \ref{kahlerCondition}, except that we have added a delta function at $z = 0$. We will explain this choice in Section \ref{m_construction}. Equation \ref{maineqn3} is the result of substituting Equation \ref{maineqn1} into Equation \ref{dwdwbar}. 

The second line of Equation \ref{kahlerCondition} gives an additional constraint. Define \[\Gamma = \partial_z \tilde{\omega} - dz \wedge d_D^c h.\] If $\frac{1}{2\pi} \Gamma$ is an integral $(1,1)$-form and $z$ is defined on $[-1,1/2]$, there exists an $S^1$ bundle $\pi: \mathcal{M} \rightarrow D \times [-1,1/2]$ with connection form $-i\Theta$ and curvature $-i \Gamma$. Choices of such $\Theta$ modulo gauge equivalence are parameterized by $\text{Hom}(H^1(\mathcal{M}),S^1)$ and yield distinct complex structures on $\mathcal{M}$. 

Given such a $\chi$, $h$, and $\Gamma$, and fixing a choice of $\Theta$, the metric on $\mathcal{M}$ is given by $\omega = \tilde{\omega} + dz \wedge \Theta$. The computation above shows that $\mathcal{L}_{\xi} \omega = 0$. Equations \ref{maineqn2} and the second line in Equation \ref{kahlerCondition} give that the complex structure defined by the complex structure on $D$ and the condition that $J dz = h \Theta$ is integrable and also that $\tilde{\omega}$ is K\"ahler. Finally, Equations \ref{maineqn1} and \ref{maineqn3} ensure that $\tilde{\omega}$ is K\"ahler-Einstein.

Note that by our addition of a delta function to Equation \ref{maineqn2}, $\chi$ and $h$ will be singular at a fixed $p = (p_D,0) \in D \times [-1,1/2]$. Since all of our computations have been pointwise, our construction goes through without change on $D \times [-1,1/2] \setminus \lbrace p \rbrace$. Though \textit{a priori} the $S^1$ bundle $\mathcal{M}$ is defined only over $D \times [-1,1/2] \setminus \lbrace p \rbrace$, we will see in Section \ref{compactification} that it can be completed over the singular point. The singularity in Equation \ref{maineqn2} causes a change in the degree of the $S^1$ bundle restricted to $D$ as we pass through $z = 0$. This is necessary to match the Calabi model spaces at either end. 

\subsection{Deriving a linearized equation for $\delta h$.}
We may ask whether there is a solution such that $h$ and $\chi$ are constant over $D$ for each $z$. In this case Equation \ref{maineqn2} tells us that away from the singularity, \[\chi = az + b \] for constants $a$ and $b$. In fact, Equation \ref{maineqn3} tells us that $b = 1$. Now if we write $u = \frac{h}{az+1}$, then we can rewrite Equation \ref{maineqn1} as
\[ (\log u)_z = -2u(az+1)z.\]
We find that
\[ h = \frac{a z+1}{\frac{2}{3}az^3 + z^2 +c}\]
for some constant $c$.  

Motivated by this observation, we guess that a nontrivial approximate solution takes the form
\begin{equation}\label{ansatz} 
h = h_0 + \delta h, \ \ \ \ \ \ \ \ \ \ \ \ \chi = 1 + \delta \chi 
\end{equation}
for $h_0 = \frac{1}{z^2+T^{-2}}$, functions $\delta h$ and $\delta \chi$ that are small in some sense, and $T$ a large constant. Then Equation \ref{maineqn1} becomes
\[ \partial_z \bigg( \log(h_0) + \log\left(1+\frac{\delta h}{h_0}\right) - \log(1+\delta\chi)\bigg) = -2(h_0 + \delta h)z.\] 
Since $h_0$ solves Equation \ref{maineqn1} with $\chi \equiv 1$, it follows that
\[ \partial_z \bigg( \log\left(1+\frac{\delta h}{h_0}\right) - \log(1+\delta\chi)\bigg) = -2z \, \delta h.\] 
Expanding the logarithmic terms into a power series and keeping only terms linear in $\delta h$ and $\delta \chi$, we derive that
\begin{equation}\label{linearized_before_deriv} \partial_z\bigg( \frac{\delta h}{h_0} \bigg)-\delta\chi_z = -2z\,\delta h.
\end{equation}
Taking another derivative yields that
\[ (z^2+T^{-2})\delta h_{zz} + 4z \, \delta h_z + 2\delta h-\delta\chi_{zz} = -2\delta h-2z\,\delta h_z.\]
By Equation \ref{maineqn2}, this becomes
\begin{equation}\label{deltah_eqn} (z^2+T^{-2}) \delta h_{zz} + 6z \, \delta h_z + (4 + \Delta_D) \delta h = 2 \pi \delta_p.
\end{equation}
Equation \ref{deltah_eqn} is the linearization of the K\"ahler-Einstein equation under $S^1$ symmetry.

\section{Constructing an Approximate Solution}\label{solution}
In this section we solve Equation \ref{deltah_eqn}. In the notation introduced in Section \ref{kahler_reduction}, this gives us a candidate for $h$, and we can define $\chi$ using Equation \ref{maineqn2}. Since the resulting pair $(\chi, h)$ gives an exact solution to Equation \ref{maineqn2}, the resulting space is K\"ahler, but since we have linearized Equation \ref{maineqn1}, it is not Einstein. In Section \ref{perturbation} we will check that the discrepancy between the metric and a negative multiple of its Ricci curvature is not too large and then correct to an honest K\"ahler-Einstein space. 
\subsection{Solving the linearized equation for $\delta h$}
\subsubsection{Separation of variables}
We assume that the solution can be expanded in the eigenfunctions of $\Delta_D$ and write
\begin{equation}\label{deltah_expansion}
\delta h = \sum_{\lambda \geq 0} f_{\lambda} \psi_{\lambda}
\end{equation}
where $\Delta_D \psi_{\lambda} = -\lambda^2 \psi_{\lambda}$, $\psi_{\lambda}(p_D) \geq 0$, and $\int_D \vert \psi_{\lambda} \vert^2 \, \omega_D = 1$. We then have the formal expansion
\[  \delta_p = \sum_{\lambda} \psi_{\lambda}(p_D) \psi_{\lambda} \delta_0, \]
where $\delta_0 = \delta_0(z)$ is a delta function at $0$ on $[-1,1/2]$. Matching terms, Equation \ref{deltah_eqn} then gives an ordinary differential equation 
\begin{equation}\label{mode_ode}
(z^2+T^{-2}) f_{\lambda}''(z) + 6z f_{\lambda}'(z) + (4 -\lambda^2) f_{\lambda}(z)= 2 \pi \psi_{\lambda}(p_D)  \delta_0(z)
\end{equation}
for each eigenvalue $\lambda$. 

Now if $\Vert f_{\lambda} \Vert_{L^2(\mathbb{R})} < \infty$ for some $\lambda > 0$ then for each test function $\chi \in C^{\infty}_0(D \times [-1,1/2])$, 
\[ \langle f_{\lambda} \psi_{\lambda},\chi \rangle = \int_{-1}^{1/2} f_{\lambda}(z)  \bigg( \int_D \psi_{\lambda}(\cdot) \, \chi(\cdot,z)   \, \omega_D \bigg) dz.\] 
But by elliptic regularity,
\begin{equation}\label{psi_reg}
\Vert \psi_{\lambda} \Vert_{C^0} = \mathcal{O}(\sqrt{\lambda}).
\end{equation}
Therefore
\begin{align*} \bigg\vert \int_D \psi_{\lambda}(\cdot) \, \chi(\cdot,z)  \, \omega_D \bigg\vert &= \bigg\vert \frac{1}{\lambda^{2\ell}} \int_D \Delta_D^\ell \psi_{\lambda}(\cdot) \, \chi(\cdot,z)  \, \omega_D \bigg\vert \\ 
&= \bigg\vert \frac{1}{\lambda^{2\ell}} \int_D \psi_{\lambda}(\cdot) \, \Delta_D^\ell \chi(\cdot,z)  \, \omega_D \bigg\vert \\ 
&\leq \frac{C}{\lambda^{2\ell-\frac{1}{2}}} \Vert \chi \Vert_{C^{2\ell}(D \times I)},
\end{align*}
where $C$ does not depend on $\lambda$. Now say that $\Vert f_{\lambda} \Vert_{L^2(\mathbb{R})}$ grows slower than $\lambda^N$ for some $N$. Weyl's law guarantees that 
\begin{equation}\label{Weyl}
\# \lbrace \lambda \text{ an eigenvalue of $\Delta_D$}: \lambda \in [k-1,k) \rbrace \leq Ck 
\end{equation}
for some $C > 0$. Therefore for any $M > 0$, 
\begin{align*}
\left\langle \sum_{1 \leq \lambda \leq M} f_{\lambda} \psi_{\lambda},\chi \right\rangle &\leq  C
\sum_{1 \leq \lambda \leq M} \frac{ \Vert f_{\lambda} \Vert_{L^2(\mathbb{R})}}{\lambda^{2\ell-\frac{1}{2}}} \Vert \chi \Vert_{C^{2\ell}(D \times I)} \\
&\leq C  \sum_{k=2}^{\infty} k^{N+\frac{3}{2}-2\ell} \Vert \chi \Vert_{C^{2\ell}(D \times I)}.
\end{align*}
Since there are only finitely many independent eigenvectors with $\lambda < 1$, choosing $\ell$ large enough proves that $\sum f_{\lambda} \psi_{\lambda}$ gives a well-defined distribution solving Equation \ref{deltah_eqn}. It remains to show that each $f_{\lambda}$ is defined on $[-1,1/2]$ and that $\Vert f_{\lambda} \Vert_{L^2(-1,1/2)}$ has polynomial growth in $\lambda$. 

\subsubsection{ODE solution for each eigenvalue.}\label{ode_each_eval} To do this, we solve Equation \ref{mode_ode} explicitly. The change of variables $x(z) = \frac{1}{2}(1+iTz)$ converts Equation \ref{mode_ode} to the hypergeometric equation
\begin{equation}\label{hypergeo_form}
x(x-1)f''(x) + (6x-3) f'(x) + (4-\lambda^2) f(x) =  T \pi \, \psi_{\lambda}(p_D)  \delta_0(x).
\end{equation}
Note that 
\begin{equation}\label{conj}
1-x(z) = \overline{x(z)}, \ \ \ \ z \in \mathbb{R}.
\end{equation}

For $n \in \mathbb{R}$, define \[ (n)_k = \begin{cases} \prod_{j=0}^{k-1} (n+j) & k \in \mathbb{Z}_{>0} \\   1 & k = 0 \end{cases}.\] Let $F$ be the hypergeometric series
\[ F(\alpha,\beta,\gamma;x) = \sum_{k=0}^\infty \frac{(\alpha)_k (\beta)_k}{k!(\gamma)_k} x^k.\]
For any $\alpha, \beta, \gamma \in \mathbb{R}$ with $\gamma \not \in \mathbb{Z}_{< 0}$, $F$ is absolutely convergent on the open unit disk in $\mathbb{C}$.  Now let
\[ \alpha(\lambda) =  \frac{5+\sqrt{9+4\lambda^2}}{2} \ \ \ \ \ \ \beta(\lambda) =  \frac{5-\sqrt{9+4\lambda^2}}{2} \ \ \ \ \ \ \gamma = 3. \]
Using the recursion in the coefficents of $F$, it is checked that for all $\lambda$, the functions 
\[ v_1^{\lambda} = F(\alpha(\lambda), \beta(\lambda), \gamma(\lambda); x(z)) \ \ \text{ and } \ \ v_2^{\lambda} = F(\alpha(\lambda), \beta(\lambda), \gamma(\lambda);\overline{x(z)})\]
solve the homogeneous version of Equation \ref{mode_ode}. Now it is easily observed from the definition that
\[ \frac{d}{dx} F(\alpha(\lambda),\beta(\lambda),\gamma;x) = \frac{\alpha(\lambda) \beta(\lambda)}{\gamma} F(\alpha(\lambda)+1,\beta(\lambda)+1,\gamma+1;x),\] so the Wronskian $W[v_1^{\lambda},v_2^{\lambda}](0) = {v_1^{\lambda}}(0){v_2^{\lambda}}'(0) - {v_1^{\lambda}}' (0){v_2^{\lambda}}(0)$ is evaluated as
\[ -\frac{iT \alpha(\lambda)\beta(\lambda)}{\gamma} F(\alpha+1,\beta+1,\gamma+1,\frac{1}{2})F(\alpha,\beta,\gamma,\frac{1}{2}). \]
An identity due to Gauss states that
\begin{equation}\label{gaussIdentity} 
F\left(\alpha,\beta,\frac{1+\alpha+\beta}{2};\frac{1}{2}\right) = \frac{\Gamma(\frac{1}{2})\Gamma(\frac{1+\alpha+\beta}{2})}{\Gamma(\frac{1+\alpha}{2})\Gamma(\frac{1+\beta}{2})}
\end{equation}
where $\Gamma$ is the gamma function (\cite{Luke} 3.13.2). Thus for our choices of $\alpha$, $\beta$, and $\gamma$,
\[ W[v_1^{\lambda},v_2^{\lambda}](0) = -\frac{iT\alpha(\lambda) \beta(\lambda)}{\gamma} \frac{\Gamma(\frac{1}{2})^2\Gamma(\frac{1+\alpha(\lambda)+\beta(\lambda)}{2}) \Gamma(\frac{3+\alpha(\lambda)+\beta(\lambda)}{2})}{\Gamma(\frac{1+\alpha(\lambda)}{2}) \Gamma(\frac{2+\alpha(\lambda)}{2}) \Gamma(\frac{1+\beta(\lambda)}{2}) \Gamma(\frac{2+\beta(\lambda)}{2})}.\] 
Now $\Gamma$ has no roots but has a simple pole with residue $\frac{(-1)^k}{k!}$ at $-k$ for $k = 0,1,2...$. The sum $\alpha(\lambda) + \beta(\lambda) = 5$ for all $\lambda$, so the numerator of the second term in the above product is constant in $\lambda$. For all $\lambda$, $\alpha(\lambda)$ is positive, so the $\Gamma$ terms involving $\alpha(\lambda)$ are finite positive numbers. However, we may have that $\frac{1+\beta(\lambda)}{2}$ or $\frac{2+\beta(\lambda)}{2}$ is equal to a negative integer for infinitely many $\lambda$. 

Let
\[ \Sigma = \lbrace \lambda > 0: \ \lambda \ \text{is an eigenvalue of }\Delta_D \text{ and } \sqrt{9+4\lambda^2} \in \mathbb{Z}\rbrace.\] 
Then $\Sigma$ is discrete but possibly infinite. For $\lambda \not \in \Sigma$, $v_1^{\lambda}$ and $v_2^{\lambda}$ give independent solutions to the homogeneous version of Equation \ref{mode_ode}. In this case, we can solve the non-homogeneous equation by variation of parameters. We write
\[ v_{\lambda} = c_1 v_1^{\lambda} + c_2 v_2^{\lambda},\]
where 
\begin{dmath*} c_1(z) = -\int_{-\infty}^z \frac{2 \pi \psi_{\lambda}(p_D) \delta_0}{s^2+T^{-2}}\frac{v_2^{\lambda}(s)}{W[v_1^{\lambda},v_2^{\lambda}](s)} \, ds 
= -2  \pi i \,  \psi_{\lambda}(p_D) \sigma_0(z) \frac{T\gamma}{\alpha(\lambda) \beta(\lambda)}\frac{\Gamma(\frac{2+\alpha(\lambda)}{2})\Gamma(\frac{2+\beta(\lambda)}{2})}{\Gamma(\frac{1}{2})\Gamma(\frac{3+\alpha(\lambda)+\beta(\lambda)}{2})}
\end{dmath*}
and
\begin{dmath*} c_2(z) = \int_{-\infty}^z \frac{2 \pi  \psi_{\lambda}(p_D) \delta_0}{s^2+T^{-2}}\frac{v_1^{\lambda}(s)}{W[v_1^{\lambda},v_2^{\lambda}](s)} \, ds 
= 2 \pi i \,\psi_{\lambda}(p_D)  \sigma_0(z)  \frac{T\gamma}{\alpha(\lambda) \beta(\lambda)}\frac{\Gamma(\frac{2+\alpha(\lambda)}{2})\Gamma(\frac{2+\beta(\lambda)}{2})}{\Gamma(\frac{1}{2})\Gamma(\frac{3+\alpha(\lambda)+\beta(\lambda)}{2})}
= -c_1(z).
\end{dmath*}
Since $v_1(z) = \overline{v_2(z)}$, this gives that
\[ v_{\lambda} = 2i \, c_1(z) \, \text{Im} \, v_1^{\lambda}(z) =   \, 12 \pi \sigma_0(z) \frac{ T  \psi_{\lambda}(p_D)}{\alpha(\lambda) \beta(\lambda)}\frac{\Gamma(\frac{2+\alpha(\lambda)}{2})\Gamma(\frac{2+\beta(\lambda)}{2})}{\Gamma(\frac{1}{2})\Gamma(\frac{3+\alpha(\lambda)+\beta(\lambda)}{2})} \text{Im} \, v_1^{\lambda}(z),\]
where 
\[ \sigma_0(z) = \begin{cases}
0 & \text{if $z < 0$} \\
1 & \text{if $z \geq 0$}
\end{cases}.\]  
It is verified that $v_{\lambda}$ is a solution to Equation \ref{mode_ode} in the distributional sense.  

\subsubsection{Finding a decaying solution for $\lambda \not \in \Sigma$} 
We would like to define $f_{\lambda} = v_{\lambda}$, but for Equation \ref{deltah_expansion} to converge we must modify $v_{\lambda}$ by a linear combination of homogeneous solutions to Equation \ref{mode_ode} so that $f_{\lambda}(z)$ grows at worst like a fixed polynomial in $\lambda$ for all $z$. In this section we write $\alpha$ for $\alpha(\lambda)$ and $\beta$ for $\beta(\lambda)$ for brevity.

Fix a branch of the complex logarithm cut along the negative real axis. We have that for $\lambda \not \in \Sigma$, $F(\alpha,\beta,\gamma;x)$ has the analytic extension 
\begin{align*} F(\alpha,\beta,\gamma;x) =   \frac{\Gamma(\beta-\alpha) \Gamma(\gamma)}{\Gamma(\beta) \Gamma(\gamma-\alpha)} &(-x)^{-\alpha} F(\alpha,\alpha+1-\gamma, \alpha+1-\beta;x^{-1})  \\ &+   \frac{\Gamma(\alpha-\beta) \Gamma(\gamma)}{\Gamma(\beta) \Gamma(\gamma-\beta)} (-x)^{-\beta} F(\beta,\beta+1-\gamma, \beta+1-\alpha;x^{-1}),
\end{align*} 
which is defined for all $x$ such that $-x$ is in the domain of $\log$ (see \cite{Luke} 3.6(28)-(30)).  For clarity, define \[ f_1(x) =  \frac{\Gamma(\beta-\alpha) \Gamma(\gamma)}{\Gamma(\beta) \Gamma(\gamma-\alpha)} \vert x \vert^{-\alpha}e^{-\alpha i (\text{arg}(x) - \pi)}F(\alpha,\alpha+1-\gamma, \alpha+1-\beta;x^{-1})\] and \[ f_2(x) = \frac{\Gamma(\alpha-\beta) \Gamma(\gamma)}{\Gamma(\beta) \Gamma(\gamma-\beta)} \vert x \vert^{-\beta}e^{-\beta i (\text{arg}(x) - \pi)} F(\beta,\beta+1-\gamma, \beta+1-\alpha;x^{-1})\]
on $\mathbb{C} \setminus \mathbb{R}$. Then if $z > 0$,
\begin{equation}\label{upperhalf} F(\alpha,\beta,\gamma;x) = f_1(x) + f_2(x),
\end{equation}
while if $z < 0$,  
\begin{equation}\label{lowerhalf} F(\alpha,\beta,\gamma;x) = e^{-2\pi i\alpha}f_1(x) + e^{-2\pi i\beta}f_2(x).
\end{equation} 
Now because $f_1$ and $f_2$ are linearly independent solutions to the homogeneous version of Equation \ref{mode_ode} (\cite{Luke} 3.7) and $F(\alpha,\beta,\gamma;1-x)$ is defined on the upper half-plane, there exist $\mu_1$ and $\mu_2$ such that 
\begin{equation}\label{combo1} F(\alpha,\beta,\gamma;1-x(z)) = \mu_1 f_1(x(z)) + \mu_2 f_2(x(z))
\end{equation}
for $z > 0$. But by Equations \ref{conj} and \ref{lowerhalf} we have that if $z > 0$, 
\begin{equation}\label{combo2}
 F(\alpha,\beta,\gamma;1-x(z)) = e^{-2\pi i\alpha}f_1(\bar{x}(z)) + e^{-2\pi i\beta}f_2(\bar{x}(z)).
\end{equation} 
Let $z \rightarrow +\infty$ and divide Equation \ref{combo1} by Equation \ref{combo2}. Note that as $\vert z \vert \rightarrow \infty$, $F(\beta,\beta+1-\gamma,\beta+1-\alpha;x^{-1}) \rightarrow 1$ and likewise for the conjugate. Since $\beta < \alpha$, $\vert x \vert^{-\beta}$ dominates $\vert x \vert^{-\alpha}$, while the phase of $x$ goes to $\pi/2$, so we have that
\[ e^{2\pi i \beta} \mu_2 = \lim_{z \rightarrow +\infty} \frac{f_2(\bar{x}(z))}{f_2({x}(z))} = e^{\pi i\beta}.\]
But since $F(\alpha,\beta,\gamma;x)$ is a real power series, $F(\alpha,\beta,\gamma;1-x(z)) = \overline{F(\alpha,\beta,\gamma;x(z))}$. Thus Equations \ref{upperhalf} and \ref{combo1} give that 
\[ \lim \limits_{z \rightarrow \infty} \frac{\text{Im}(F( \alpha,\beta,\gamma;x(z)))}{\text{Re}(F(\alpha,\beta,\gamma;x(z)))} = \tan\left(\frac{\pi}{2}\beta\right).\]
Taking the complex conjugate, we must have that 
\[ \lim \limits_{z \rightarrow -\infty} \frac{\text{Im}(F( \alpha,\beta,\gamma;x(z)))}{\text{Re}(F(\alpha,\beta,\gamma;x(z)))} = -\tan\left(\frac{\pi}{2}\beta\right).\]
Thus if we let $\rho = -\tan(\frac{\pi}{2}\beta )$, we have that
\[ \left(\sigma_0(z)-\frac{1}{2}\right)\text{Im}(v_1^{\lambda})(z) + \frac{\rho}{2}  \text{Re}(v_1^{\lambda})(z) = \mathcal{O}((Tz)^{-\alpha})\]

Now let 
\[ C_1 =  \frac{12 \pi \psi_{\lambda}(p_D) T}{ \alpha \beta}\frac{\Gamma(\frac{2+\alpha}{2})\Gamma(\frac{2+\beta}{2})}{\Gamma(\frac{1}{2})\Gamma(\frac{3+\alpha+\beta}{2})},\]
so that 
\[ v_{\lambda}(z) = C_1 \sigma_0(z) \text{Im}(v_1^{\lambda}(z)).\]  
Define
\begin{equation}\label{flambda_expression} f_{\lambda}(z)=\frac{C_1}{2}\bigg( -\text{Im}(v_1^{\lambda})(z)+\rho\text{Re}(v_1^{\lambda})(z) \bigg) + v_{\lambda}(z). 
\end{equation} 
By our discussion above, 
\begin{equation}\label{soln_decay}
f_{\lambda}(z) =\mathcal{O}((Tz)^{-\alpha}).
\end{equation}
Indeed our choice of $f_{\lambda}$ is the unique decaying solution to the inhomogeneous equation \ref{mode_ode}.

\subsubsection{Extension to $\Sigma$}  
By Equation \ref{conj}, we can replace $\text{Im}(F)$ and $\text{Re}(F)$ in Equation \ref{flambda_expression} with appropriate linear combinations of $F(\alpha, \beta, \gamma, x)$ and $F(\alpha, \beta, \gamma, 1-x)$. It can be checked that both of these functions are locally holomorphic in $x$, $\alpha$ and $\beta$. Checking all other terms in Equation \ref{flambda_expression}, we find that if we ignore the step function term, the function
\[ \frac{f_{\lambda}(x)}{\psi_{\lambda}(p_D)}, \]
extends to a meromorphic function of $\alpha$ (recall that $\beta = 5-\alpha$) and of $x$ away from $x = 1$. Call this function $g(\alpha,x)$. Now in Section \ref{l2_estimate} we will show that away from $\lambda = 0$, $f_{\lambda}(z)$ is uniformly bounded by a polynomial in $\lambda$ for $\lambda \not \in \Sigma$ and $z \in \mathbb{R}$. Therefore for $\lambda^{\ast} \in \Sigma$, $g$ is holomorphic in a neighborhood of \[\lbrace \alpha(\lambda^{\ast}) \rbrace \times \lbrace x(z): z \in [-1,0] \rbrace \subset \mathbb{C}^2,\] and so $g(\alpha(\lambda),x(z))$ converges smoothly to a function of $z \in (-1,0)$ as $\lambda \rightarrow \lambda^{\ast}$. Note that convergence holds up to the boundary, i.e. $\partial_z^k g(\alpha(\lambda),x(0)) \rightarrow \partial_z^k g^{k}(\alpha(\lambda^\ast),x(0))$ for all integers $k \geq 0$. 

By a similar argument, $g(\alpha,x)$  converges smoothly to a function for $z \in [0,1/2]$. We define 
\[ f_{\lambda^\ast}(z) = \psi_{\lambda^{\ast}}(p_D) g(\alpha(\lambda^{\ast}),z).\]
Smooth convergence up to the boundary on $[-1,0]$ and $[0,1/2]$ ensures that $f_{\lambda^{\ast}}$ satisfies Equation \ref{mode_ode}.

\subsubsection{Solution for $\lambda =0$}  This convergence argument does not give the solution at $\lambda = 0$ because the uniform bound on $g$ does not hold as $\lambda \rightarrow 0$. In this case, however, the general solution to the homogeneous version of Equation \ref{mode_ode} is given by
\[ \frac{1}{(1+(Tz)^2)^2}\bigg( \mu_1 + \mu_2 \bigg(z+\frac{T^2z^3}{3} \bigg) \bigg)\]
for $\mu_1, \ \mu_2 \in \mathbb{R}$. 

Variation of parameters yields an inhomogeneous solution
\[ v_0 =  2\pi \sigma_0(z) T^2 \psi_{0}(p_D) \frac{z+\frac{T^2z^3}{3}}{(1+(Tz)^2)^2}  .\]
We add a homogeneous solution to define the even function 
\[ f_0(z) = (2\sigma_0(z)-1)\pi T^2\psi_{0}(p_D) \frac{z+\frac{T^2z^3}{3}}{(1+(Tz)^2)^2}.\]

\subsubsection{$L^2$ estimates of the ODE solutions}\label{l2_estimate}
To show the convergence of Equation \ref{deltah_expansion}, we first control the growth of the solutions to Equation \ref{mode_ode} at the origin. Then we use the maximum principle to show that the value at the origin bounds the value everywhere. 
\begin{proposition}\label{zero_bound}For all $\lambda > 2$ and $T > 0$, let $f_{\lambda}^T$ be the unique decaying solution to Equation \ref{mode_ode}. Then $f_{\lambda}^T(0) \leq 0$. Further, there exists $N > 0$ independent of $T$ and $\lambda$ such that 
\[ \vert f_{\lambda}^T(0) \vert = T \mathcal{O}(\lambda^N)\]
as $\lambda \rightarrow \infty$. 
\end{proposition}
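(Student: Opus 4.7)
The plan is to compute $f_\lambda^T(0)$ in closed form and then read off both the sign and the size from the resulting gamma-function expression. Recall the formula
\[ f_\lambda(z) = \frac{C_1}{2}\bigl(-\mathrm{Im}\,v_1^\lambda(z) + \rho\,\mathrm{Re}\,v_1^\lambda(z)\bigr) + v_\lambda(z), \qquad v_\lambda(z) = C_1\,\sigma_0(z)\,\mathrm{Im}\,v_1^\lambda(z). \]
At $z = 0$ we have $x(0) = 1/2 \in \mathbb{R}$, and since $F(\alpha,\beta,\gamma;\cdot)$ has real Taylor coefficients for the real parameters $\alpha,\beta,\gamma$ under consideration, $v_1^\lambda(0) = F(\alpha,\beta,3;1/2)$ is real. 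In particular $\mathrm{Im}\,v_1^\lambda(0) = 0$, so the $v_\lambda$ and $\mathrm{Im}\,v_1^\lambda$ terms vanish and the jump at $z = 0$ drops out, leaving
\[ f_\lambda^T(0) = \frac{C_1 \rho}{2}\,F(\alpha,\beta,3;\tfrac{1}{2}). \]

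Since $\alpha + \beta = 5$ we have $\gamma = 3 = (1+\alpha+\beta)/2$, so Gauss's identity \eqref{gaussIdentity} applies and gives $F(\alpha,\beta,3;1/2) = 2\Gamma(1/2)/[\Gamma(\tfrac{1+\alpha}{2})\Gamma(\tfrac{1+\beta}{2})]$. Substituting the definition of $C_1$ and simplifying with $\Gamma(\tfrac{3+\alpha+\beta}{2}) = \Gamma(4) = 6$ yields
\[ f_\lambda^T(0) = \frac{2\pi\,\psi_\lambda(p_D)\,T}{\alpha\beta}\cdot\rho\cdot\frac{\Gamma(\tfrac{2+\alpha}{2})\Gamma(\tfrac{2+\beta}{2})}{\Gamma(\tfrac{1+\alpha}{2})\Gamma(\tfrac{1+\beta}{2})}. \]
For $\lambda > 2$ we have $\alpha > 5$ and $\beta = 5-\alpha < 0$, so the gamma functions involving $\beta$ can take either sign. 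The next step is to remove these sign ambiguities using the reflection formula $\Gamma(w)\Gamma(1-w) = \pi/\sin(\pi w)$ applied to $w = (6-\alpha)/2$ and $w = (7-\alpha)/2$. A short calculation gives
\[ \frac{\Gamma(\tfrac{2+\beta}{2})}{\Gamma(\tfrac{1+\beta}{2})} = -\tan\!\bigl(\tfrac{\pi\alpha}{2}\bigr)\cdot\frac{\Gamma(\tfrac{\alpha-4}{2})}{\Gamma(\tfrac{\alpha-5}{2})}, \]
and since $\beta = 5-\alpha$, the periodicity of $\tan$ yields $\rho = -\tan(\tfrac{\pi\beta}{2}) = -\cot(\tfrac{\pi\alpha}{2})$. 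The factors $\rho$ and $-\tan(\tfrac{\pi\alpha}{2})$ multiply to $1$, and all trigonometric quantities cancel. I arrive at the sign-definite closed form
\[ f_\lambda^T(0) = \frac{2\pi\,\psi_\lambda(p_D)\,T}{\alpha\beta}\cdot\frac{\Gamma(\tfrac{2+\alpha}{2})\Gamma(\tfrac{\alpha-4}{2})}{\Gamma(\tfrac{1+\alpha}{2})\Gamma(\tfrac{\alpha-5}{2})}. \]

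Since for $\lambda > 2$ all four gamma function arguments on the right are positive, each gamma value is positive; the convention $\psi_\lambda(p_D) \geq 0$ and $T > 0$ make the numerator non-negative, while $\alpha\beta < 0$ makes the overall quantity non-positive, proving $f_\lambda^T(0) \leq 0$. For the polynomial bound, Stirling's formula shows $\Gamma(\tfrac{2+\alpha}{2})/\Gamma(\tfrac{1+\alpha}{2})$ and $\Gamma(\tfrac{\alpha-4}{2})/\Gamma(\tfrac{\alpha-5}{2})$ are each $O(\alpha^{1/2}) = O(\lambda^{1/2})$, while $|\alpha\beta|^{-1} = O(\lambda^{-2})$ and $\psi_\lambda(p_D) = O(\lambda^{1/2})$ by \eqref{psi_reg}, giving $|f_\lambda^T(0)| = T\cdot O(\lambda^{-1/2})$, well within $T\cdot O(\lambda^N)$ for any $N \geq 0$.

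The main obstacle is really bookkeeping: one must track phase factors from the reflection identity carefully so that the cancellation between $\rho$ and the signed ratio of $\beta$-gammas is clean, and one must handle $\lambda \in \Sigma$ (where individual factors have poles or zeros and $\rho$ is singular) by invoking the smooth meromorphic extension $g(\alpha,x)$ established in the extension-to-$\Sigma$ subsection, so that both statements follow by continuity in $\alpha$. No further analytic input is needed, since the result is an exact evaluation at a single point rather than an estimate requiring a maximum principle.
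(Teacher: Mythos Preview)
Your proof is correct and follows essentially the same approach as the paper: evaluate $f_\lambda^T(0)$ explicitly via Gauss's identity, then use the gamma reflection formula to convert the $\beta$-gamma ratio into one with positive arguments, cancelling the trigonometric factor against $\rho$ and reading off the sign from $\alpha\beta<0$. The only cosmetic differences are that the paper writes the reflection in terms of $\beta$ rather than $\alpha$ (yielding $\Gamma(\tfrac{1-\beta}{2})/\Gamma(\tfrac{-\beta}{2})$, which equals your $\Gamma(\tfrac{\alpha-4}{2})/\Gamma(\tfrac{\alpha-5}{2})$) and bounds the resulting gamma ratio by crude factorial inequalities instead of Stirling.
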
  
\begin{proof} 
Using Equation \ref{gaussIdentity} to evaluate $v_1^{\lambda}(0)$ and noticing that $v_1^{\lambda}(0)$ is a real power series with a real argument, we see that if $\lambda \not \in \Sigma$ then
\begin{dmath}\label{G}
f_{\lambda}^T(0) = -\frac{12 \pi T \psi_{\lambda}(p_D)}{2 \alpha\beta} \frac{\Gamma(\frac{2+\alpha}{2})\Gamma(\frac{2+\beta}{2})}{\Gamma(\frac{1}{2})\Gamma(4)} \frac{\Gamma(\frac{1}{2})\Gamma(3)} {\Gamma(\frac{1+\alpha}{2})\Gamma(\frac{1+\beta}{2})} \tan\left(\frac{\pi}{2} \beta\right)
= -C\frac{T\psi_{\lambda}(p_D)}{\alpha\beta} \frac{\Gamma(\frac{2+\alpha}{2})\Gamma(\frac{2+\beta}{2})}{\Gamma(\frac{1+\alpha}{2})\Gamma(\frac{1+\beta}{2})} \tan\left(\frac{\pi}{2} \beta\right)
\end{dmath}
for some $C > 0$ independent of $\lambda$ and $T$. First we note that by elliptic regularity,\[\psi_{\lambda}(p_D) = \mathcal{O}(\sqrt{\lambda})\] as $\lambda \rightarrow \infty$.
To understand the gamma function terms, we note that for $x \geq 3$, 
\begin{equation}\label{gamma_squeeze}
\lfloor x-1 \rfloor ! \leq \Gamma(x)  \leq \lceil x-1 \rceil !.
\end{equation}
In addition, we have the following identities (see \cite{Luke}  2.2).
\begin{lemma}  \label{gamma_identities}
For $x \in \mathbb{R}$ and $N \in \mathbb{Z}_{> 0}$, 
\begin{equation}\label{gamma_intshift}
\Gamma(x+N) = (x)_N \Gamma(x),
\end{equation}
\begin{equation}\label{gamma_one_reflect}
\Gamma(x)\Gamma(1-x) = \pi \csc \pi x,
\end{equation}
and 
\begin{equation}\label{gamma_half_reflect}
\Gamma(\frac{1}{2}+x)\Gamma(\frac{1}{2}-x) = \pi \sec \pi x.
\end{equation}
\end{lemma}

Equation \ref{gamma_one_reflect} and \ref{gamma_half_reflect} give that
\begin{dmath*} \frac{\Gamma(\frac{2+\alpha}{2})\Gamma(\frac{2+\beta}{2})}{\Gamma(\frac{1+\alpha}{2})\Gamma(\frac{1+\beta}{2})} = 
-\frac{\Gamma(\frac{2+\alpha}{2})\Gamma(\frac{1-\beta}{2})}{\Gamma(\frac{1+\alpha}{2})\Gamma(\frac{-\beta}{2})} \cot\left(\frac{\pi \beta}{2}\right).
\end{dmath*}  
Cancelling the tangent terms and noting that $\beta < 0$ as long as $\lambda > 2$, this proves that $f_{\lambda}^T(0) \leq 0$. Further, since $a \rightarrow \infty$ as $\lambda \rightarrow \infty$ and $\beta = 5-\alpha$, Equation \ref{gamma_squeeze} gives that for large enough $\lambda$, 
\[  \frac{\Gamma(\frac{2+\alpha}{2})\Gamma(\frac{1-\beta}{2})}{\Gamma(\frac{1+\alpha}{2})\Gamma(\frac{-\beta}{2})} \leq \frac{\lceil \frac{\alpha}{2} \rceil! \lceil \frac{\alpha-6}{2} \rceil!}{ \lfloor \frac{\alpha-1}{2} \rfloor ! \lfloor \frac{\alpha-7}{2} \rfloor!},\]
which by Equation \ref{gamma_intshift} is bounded by a polynomial in $\alpha$ as $\alpha \rightarrow \infty$. But $\alpha  = \mathcal{O}(\lambda)$ as $\lambda \rightarrow \infty$.   
\end{proof}

\begin{proposition}\label{monotonicity} Take $f_{\lambda}^T$ as in Proposition \ref{zero_bound}. Then $f_{\lambda}^T(z)$ is  nonincreasing on $(-\infty,0]$ and nondecreasing on $[0,\infty)$.
\end{proposition}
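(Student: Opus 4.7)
My plan is to exploit the negativity of the zeroth-order coefficient $4 - \lambda^2$ in Equation \ref{mode_ode} via an elementary interior-extremum test, combined with the boundary data $f_{\lambda}^T(0) \leq 0$ from Proposition \ref{zero_bound} and the decay $f_{\lambda}^T(z) \to 0$ as $|z| \to \infty$ from Equation \ref{soln_decay}. On each half-line $(-\infty,0)$ and $(0,\infty)$, $f := f_{\lambda}^T$ is smooth and satisfies the homogeneous version of Equation \ref{mode_ode}, so the two monotonicity statements are symmetric and I focus on $[0,\infty)$.

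The key pointwise observation is that at any interior critical point $z_0 > 0$, the equation collapses to
\[ (z_0^2 + T^{-2})\, f''(z_0) = (\lambda^2 - 4)\, f(z_0), \]
and for $\lambda > 2$ both coefficients are strictly positive. Hence $f''(z_0)$ and $f(z_0)$ share a common sign, so at any interior local maximum $f \leq 0$, and at any interior local minimum $f \geq 0$.

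From this I first deduce that $f \leq 0$ on $[0,\infty)$: a positive supremum would have to be attained at an interior point, since $f(0) \leq 0$ and $f \to 0$ at infinity, giving a local maximum at which $f > 0$ and contradicting the test. For monotonicity, I argue by contradiction. Suppose $0 \leq z_1 < z_2$ satisfy $f(z_1) > f(z_2)$. Then $f(z_2) < f(z_1) \leq 0$, and the continuous function $f$ attains its infimum on $[z_1,\infty)$. That infimum cannot occur at $z_1$ (which would force $f(z_1) \leq f(z_2)$), nor can it be the limit at $+\infty$ (since $f(z_2) < 0 = \lim_{z \to \infty} f(z)$), so it is attained at some interior local minimum $z_3 > z_1$ with $f(z_3) < 0$, once again contradicting the sign test. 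The argument on $(-\infty,0]$, replacing decay at $+\infty$ with decay at $-\infty$, is identical.

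I do not anticipate substantive obstacles. The one mild subtlety is that $f'$ has a jump discontinuity at $z = 0$ imposed by the delta source in Equation \ref{mode_ode}, but only continuity of $f$ at $0$ enters the argument, so the jump plays no role in reducing the problem to the two half-line analyses above.
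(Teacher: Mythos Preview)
Your proof is correct and follows essentially the same route as the paper's: both arguments use the sign test at an interior critical point (from $4-\lambda^2<0$ in Equation~\ref{mode_ode}) together with $f_\lambda^T(0)\le 0$ and the decay in Equation~\ref{soln_decay} to first force $f_\lambda^T\le 0$, and then rule out nonmonotonicity by observing that a violation would produce a negative interior local minimum, which the sign test forbids. Your write-up is slightly more explicit about why the extremum must be an interior critical point, but the underlying idea is identical.
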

\begin{proof} 
We first note that $f_{\lambda}^T(z) \leq 0$ for all $z$ and $\lambda > 2$. For away from $z = 0$, $f_{\lambda}^T$ is a smooth solution to the homogeneous version of Equation \ref{mode_ode}. Therefore by Proposition \ref{zero_bound} and Equation \ref{soln_decay}, if $f_{\lambda}^T(z) > 0$ for any $z < 0$ then $f_{\lambda}^T$ has a positive local maximum at some $z^{\ast} \in (-\infty,0)$. But then $(f_{\lambda}^T)''(z^{\ast}) \leq 0$, $(f_{\lambda}^T)'(z^{\ast}) = 0$, and $f_{\lambda}^T(z^{\ast}) > 0$, contradicting Equation \ref{mode_ode}. An identical argument shows that $f_{\lambda}^T$ is nonpositive on $(0,\infty)$. 

Now we show that for $\lambda> 2$, $f_{\lambda}^T(z)$ is nonincreasing on $(-\infty,0]$. An argument as in the previous paragraph shows that $f_{\lambda}^T(z)$ cannot have a negative local minimum on $(-\infty,0)$. But if $f_{\lambda}(z_1) < f_{\lambda}(z_2)$ for $z_1 < z_2 < 0$, there must be a negative local minimum on $(-\infty,z_2)$. 

A similar argument gives that $f_{\lambda}^T$ is nondecreasing from $0$ to $\infty$. 
\end{proof} 
Propositions \ref{zero_bound} and \ref{monotonicity} give the desired $L^2$ growth control.  
\begin{proposition} Fix $T>0$. Then \[ \Vert f_{\lambda}^T(z) \Vert_{L^2(-1,1)} = \mathcal{O}(\lambda^N)\] for some $N > 0$.
\end{proposition}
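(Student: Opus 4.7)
The proof follows almost immediately from combining the preceding two propositions. The plan is to use the monotonicity statement to reduce the $L^2$ norm to a single pointwise value at $z=0$, and then invoke the explicit polynomial-in-$\lambda$ bound for $|f_\lambda^T(0)|$.

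First I would handle the generic case $\lambda > 2$. By Proposition \ref{monotonicity}, $f_\lambda^T$ is nonincreasing on $(-\infty,0]$ and nondecreasing on $[0,\infty)$, and (as established in the proof of that proposition) $f_\lambda^T(z) \leq 0$ everywhere. Together these give the pointwise inequality
\[
|f_\lambda^T(z)| \;\leq\; |f_\lambda^T(0)| \qquad \text{for all } z \in [-1,1],
\]
since the function attains its minimum (hence, being nonpositive, its maximum modulus) at $z=0$. Integrating,
\[
\|f_\lambda^T\|_{L^2(-1,1)}^2 \;\leq\; 2\,|f_\lambda^T(0)|^2.
\]
Proposition \ref{zero_bound} then gives $|f_\lambda^T(0)| = T\,\mathcal{O}(\lambda^N)$ for some $N$ independent of $\lambda$. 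Since $T$ is fixed in the present statement, this yields $\|f_\lambda^T\|_{L^2(-1,1)} = \mathcal{O}(\lambda^N)$.

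Two minor loose ends remain. First, there are only finitely many eigenvalues $\lambda \in [0,2]$ (including $\lambda=0$, handled separately by the explicit formula in the earlier subsection on $\lambda=0$); each individual $f_\lambda^T$ is a smooth function on $[-1,1]$, so these contribute an absolute constant to the bound and may be absorbed into $\mathcal{O}(\lambda^N)$. Second, for $\lambda \in \Sigma$ one must confirm the bound since $f_\lambda^T$ was defined by taking a limit $\lambda \to \lambda^\ast$ of the generic formula; but by the smooth convergence established in the subsection on extension to $\Sigma$, both $f_{\lambda^\ast}^T(0)$ and the monotonicity structure pass to the limit, so the same estimate applies.

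I do not expect any substantial obstacle here: the hard analytic work was already done in Proposition \ref{zero_bound} (the gamma function manipulations yielding the polynomial bound) and Proposition \ref{monotonicity} (the maximum principle argument ruling out interior extrema). The present statement is really a corollary packaging those two results into the form needed to justify convergence of the eigenfunction expansion \ref{deltah_expansion}.
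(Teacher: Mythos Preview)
Your proposal is correct and matches the paper's approach exactly: the paper simply states that ``Propositions \ref{zero_bound} and \ref{monotonicity} give the desired $L^2$ growth control'' and offers no further proof, so you have filled in precisely the intended argument (pointwise bound by $|f_\lambda^T(0)|$ via monotonicity, then polynomial control of $|f_\lambda^T(0)|$).
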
 
\begin{remark}\label{soln_extension}
The solutions $f_{\lambda}^T$ are defined on all of $\mathbb{R}$, and the convergence arguments of this section apply to any compact interval. Therefore it is justified to speak of $\delta h$ as a function on $\mathbb{R}$.
\end{remark}

\subsubsection{Decay of $\delta h$}
Away from the singular point, the behavior of $\delta h$ is controlled by the zero mode which is constant over $D$.
\begin{proposition}\label{deltah_decay_prop}
Let $\delta h^T = \sum_{\lambda \geq 0} f_{\lambda}^T \psi_{\lambda}$. Then there exists $\rho > 0$ such that
\begin{equation}\label{deltah_decay}
\delta h^T= f_0^T \psi_0 + T \,\mathcal{O}((Tz)^{-4})
\end{equation}
and
\begin{equation}\label{deltah_decay_laplac}
\Delta_D \delta h^T = T \,\mathcal{O}((Tz)^{-4})
\end{equation}
on $\vert z \vert \geq \rho T^{-1}$ as $T \rightarrow \infty$ . 
\end{proposition}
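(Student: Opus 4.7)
The plan is to separate the $\lambda = 0$ mode from the eigenfunction expansion $\delta h^T = \sum_\lambda f_\lambda^T \psi_\lambda$ and show that the remainder is uniformly $T \,\mathcal{O}((Tz)^{-4})$ in $C^0(D)$ for each $z$ with $|Tz|$ sufficiently large. Since $\alpha(\lambda) = (5+\sqrt{9+4\lambda^2})/2$ equals $4$ at $\lambda = 0$ and is strictly larger for $\lambda > 0$, each positive mode $f_\lambda^T$ decays strictly faster than $(Tz)^{-4}$, and the work is to make this uniform in $\lambda$ and prove summability.

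First I would upgrade the pointwise decay $f_\lambda^T(z) = \mathcal{O}((Tz)^{-\alpha(\lambda)})$ from \eqref{soln_decay} to a quantitative estimate
\[ \vert f_\lambda^T(z) \vert \leq T P(\lambda)\, (Tz)^{-\alpha(\lambda)} \qquad \text{for all } |Tz| \geq M, \ \lambda > 0, \]
with $P$ a polynomial independent of $T$ and $M$ a large constant. To do this, I would substitute the analytic continuation formulas \eqref{upperhalf}--\eqref{lowerhalf} into the definition \eqref{flambda_expression}. By construction, the combination defining $f_\lambda^T$ cancels the slower $(Tz)^{-\beta}$ contribution, so only the $(Tz)^{-\alpha}$ tail survives. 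The prefactor $C_1$ and the coefficients of the tail are controlled by polynomials in $\lambda$ via the same $\Gamma$-function manipulations used in the proof of Proposition \ref{zero_bound}, together with the Stirling-type bound \eqref{gamma_squeeze}.

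Next I would sum this mode-wise estimate using $\Vert \psi_\lambda \Vert_{C^0} = \mathcal{O}(\sqrt{\lambda})$ from \eqref{psi_reg} and Weyl's law \eqref{Weyl}:
\[ \Vert \delta h^T - f_0^T \psi_0 \Vert_{C^0(D)}(z) \leq T(Tz)^{-4} \sum_{\lambda > 0} C P(\lambda) \sqrt{\lambda} \, M^{-(\alpha(\lambda)-4)}. \]
Since $\alpha(\lambda) \sim \lambda$ as $\lambda \to \infty$, the factor $M^{-(\alpha(\lambda)-4)}$ decays exponentially for $M > 1$, easily dominating both $P(\lambda)\sqrt{\lambda}$ and the polynomial eigenvalue multiplicities from Weyl; taking $M$ large enough guarantees convergence of the remaining small-$\lambda$ contributions. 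This yields \eqref{deltah_decay} with $\rho = M$. For \eqref{deltah_decay_laplac}, I would observe that $\Delta_D(f_0^T\psi_0) = 0$ since $\psi_0$ is constant, while $\Delta_D(f_\lambda^T\psi_\lambda) = -\lambda^2 f_\lambda^T \psi_\lambda$ for $\lambda > 0$, and then rerun the same summation with an extra factor of $\lambda^2$ absorbed into $P$.

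The main obstacle is precisely the uniformity in $\lambda$ in the first step. The pointwise-in-$\lambda$ asymptotic $f_\lambda^T = \mathcal{O}((Tz)^{-\alpha(\lambda)})$ from Section \ref{ode_each_eval} has an implicit constant that depends on $\lambda$, and tracking this dependence requires care with the hypergeometric asymptotic as $\alpha,\beta$ grow (and in particular with what happens near the exceptional set $\Sigma$, which should be absorbed using the smooth extension in Section \ref{ode_each_eval}). If the direct tracking proves cumbersome, an alternative route is a maximum-principle comparison for the Euler-type operator $z^2\partial_z^2 + 6z\partial_z + (4-\lambda^2)$ using $|z|^{-\alpha(\lambda)}$ as a barrier, bootstrapped from the boundary bound $\vert f_\lambda^T(0)\vert = T\,\mathcal{O}(\lambda^N)$ of Proposition \ref{zero_bound} and the monotonicity from Proposition \ref{monotonicity}.
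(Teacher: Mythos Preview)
Your primary strategy---tracking the constants in the hypergeometric tail expansion directly---is plausible but is not what the paper does, and it carries exactly the uniformity burden you flag: the coefficients in \eqref{upperhalf}--\eqref{lowerhalf} involve $\Gamma(\beta-\alpha)$, which blows up on $\Sigma$, and the region where the asymptotic kicks in could in principle depend on $\lambda$. These are probably surmountable but would make the argument heavier.

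The paper instead implements a version of your alternative route, with two specific choices you did not quite anticipate. First, it reduces to $T=1$ via the scaling identity $T^{-1}\delta h^T(z) = \delta h^1(Tz)$, so one only needs $\delta h^1(z) = f_0^1\psi_0 + \mathcal{O}(z^{-4})$ for $|z|\ge\rho$. Second, rather than using $|z|^{-\alpha(\lambda)}$ as a barrier (which is an exact solution of the leading Euler operator and therefore marginal), it sets
\[ g_\lambda(z) = z^{\epsilon\lambda+4}\,\max_D|\psi_\lambda|\, f_\lambda(z), \qquad \epsilon\in(0,1)\ \text{close to }1, \]
and derives the transformed ODE for $g_\lambda$. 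The point of the exponent $\epsilon\lambda+4$ (strictly below $\alpha(\lambda)$ for large $\lambda$) is that the zeroth-order coefficient in the new equation is strictly positive on $(0,1)$, so the maximum principle forbids a negative interior maximum of $g_\lambda$. Combined with $g_\lambda\to 0$ at both $z=0$ and $z=\infty$, this forces $g_\lambda$ to be monotone beyond some $\eta>0$ independent of $\lambda$. One then compares at two fixed radii $\eta<\rho_1<\rho$:
\[ |\rho^4 f_\lambda(\rho)|\,\max_D|\psi_\lambda| \le (\rho_1/\rho)^{\epsilon\lambda}\,|\rho_1^4 f_\lambda(\rho_1)|\,\max_D|\psi_\lambda| \le C(\rho_1/\rho)^{\epsilon\lambda}\lambda^k, \]
the last inequality coming from Propositions \ref{zero_bound}--\ref{monotonicity} and \eqref{psi_reg}. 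The geometric factor $(\rho_1/\rho)^{\epsilon\lambda}$ gives summability via Weyl. This two-radius comparison is the mechanism that converts the qualitative monotonicity into a quantitative, $\lambda$-uniform decay constant, and it sidesteps the $\Sigma$ issue entirely since it never touches the hypergeometric continuation formulas.

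Your treatment of \eqref{deltah_decay_laplac} (insert $\lambda^2$ and resum) matches the paper.
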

\begin{proof}
We see from Equation \ref{mode_ode} that $\frac{1}{T}\delta h^T(z) = \delta h^1(Tz)$. Therefore it is sufficient to show that 
\[ \delta h^1(z) = f_0^1 \psi_0 + \mathcal{O}(z^{-4})\]
for $\vert z \vert \geq \rho$. By Remark \ref{soln_extension} we  may treat $\delta h^1$ as a function on $\mathbb{R}$. If $\lambda > 0$ then $\alpha(\lambda) > 4$, so by Equation \ref{soln_decay} each  $\psi_{\lambda} f_{\lambda}(z)$ is dominated by $z^{-4} C_{\lambda}$ for some  $C_{\lambda} > 0$. It remains to show that the series $\sum_{\lambda} C_{\lambda}$ converges.

Now for $z \in (0,\infty)$ and $\epsilon \in (0,1)$ let \[g_{\lambda}(z) = z^{\epsilon\lambda+4} \max_D \vert \psi_{\lambda} \vert \, f_{\lambda}(z).\] Then Equation \ref{mode_ode} gives that
\begin{equation}\label{geqn} 
z^2(1+z^2)g_{\lambda}''(z) \ \ + \ \  ...  
\ \  + \ \ 
(\lambda^2(\epsilon^2 + (\epsilon^2-1)z^2) + \lambda(3\epsilon z^2 + 9 \epsilon) + 20) g_{\lambda}(z) \equiv 0,
\end{equation}
on $D$, where we have omitted the $g'_{\lambda}(z)$ term. The coefficient of $g_{\lambda}''(z)$ and the coefficient of $g_{\lambda}(z)$ are positive for all $\vert z \vert < 1$ for $\epsilon$ close enough to $1$. 

Assume that $\lambda \gg 0$. By Proposition \ref{zero_bound}, $g_{\lambda}(z) < 0$ for all $z > 0$. Equation \ref{soln_decay} gives that  $\lim \limits_{z \rightarrow \pm \infty} \vert g_{\lambda}(z) \vert = 0$, since \[\alpha(\lambda) > 4 + \epsilon\lambda, \ \ \lambda \gg 0.\] Also, since $f_{\lambda}$ is continuous, $\lim \limits_{z \rightarrow 0} g_{\lambda}(z) = 0$. But by Equation \ref{geqn} and the arguments from Proposition \ref{monotonicity}, $g_{\lambda}(z)$ cannot have a negative local maximum in $(0,\infty)$. Therefore there exists $\eta > 0$ such that $g_{\lambda}(z)$ is nondecreasing on $(\eta,\infty)$. 

Fix $\rho$ and $\rho_1$ such that $ \eta < \rho_1 < \rho$. By Proposition \ref{zero_bound}, Proposition \ref{monotonicity}, and Equation \ref{psi_reg}, \[\vert \max_D \psi_{\lambda}\, f_{\lambda}(\rho_1) \vert = \mathcal{O}(\lambda^k)\] for some $k < \infty$. Then 
\[
\vert \rho^4 \, \max_D \psi_{\lambda}\, f_{\lambda}(\rho) \vert \leq \bigg\vert \bigg(\frac{\rho_1}{\rho}\bigg)^{\epsilon\lambda} \rho_1^4  \max_D \psi_{\lambda}\, f_{\lambda}(\rho_1)   \bigg\vert
\leq C \bigg( \frac{\rho_1}{\rho}\bigg)^{\epsilon\lambda} \lambda^k,
\]
where $C$ depends on $\rho_1$ but not $\lambda$. Using Equation \ref{Weyl}, the right hand side is summable, so we can take $C_{\lambda} = C (\frac{\rho_1}{\rho})^{\epsilon\lambda} \lambda^k$ with $C$ as above. By nondecreasingness, this bound holds for all $z > \rho$. Meanwhile, because $f_{\lambda}$ is even-symmetric, the bound holds for $z < -\rho$ as well.


Since $\Delta_D \delta h = \sum_{\lambda > 0} -\lambda^2 \psi_{\lambda} f_{\lambda}$, the proof for $\Delta_D \delta h^T(z)$ follows with $C_{\lambda} = C (\frac{\rho_1}{\rho})^{\epsilon\lambda} \lambda^{k+2}$. 
\end{proof}

\subsection{Correction to $h$}\label{hcorrection} Given the solution $h = h_0 + \delta h$ to the linearization of Equation \ref{maineqn1} that we have just constructed, we can define $\chi$ by integrating Equation \ref{maineqn2} and choosing $a = k_- = 0$. Having done so, we see that $(\chi,h + q(z))$ also solves Equation \ref{maineqn2} for $q$ a smooth function of $z$. We make use of this freedom to construct another solution $\tilde{h}$ that has the desired behavior near the ends of the $z$-interval $[-1,1/2]$. 

Fix a constant $C_2 > 0$ as in Proposition \ref{deltah_decay_prop}. Then we have that for $\vert z \vert \geq \frac{C_2}{2T}$,
\[ h = T^2 \bigg( \frac{1}{1+(Tz)^2} + (2\sigma_0(z)-1) \pi \psi_0(p_D)  \frac{z+\frac{T^2z^3}{3}}{(1+(Tz)^2)^2}+ \mathcal{O}(T^{-1}) \bigg).\]
 Now for each $T$ let $\tilde{h}$ be a smooth function satisfying
\[ T^{-2} \tilde{h} = \begin{cases} T^{-2} h  &  \vert z \vert  < \frac{C_2}{2T}\\
T^{-2} h + \mathcal{O}(T^{-1})  &  \frac{C_2}{2T} \leq \vert z \vert < \frac{C_2}{T} \\
\frac{k_\pm z+1}{\frac{2}{3}T^2k_\pm z^3 + T^2 z^2 + 1} + \mathcal{O}(T^{-5}z^{-4}) & \frac{C_2}{T} \leq \vert z \vert < 1
\end{cases}\]
with $k_\pm$ interpreted as $k_-$ for $z< 0 $ and $k_+$ for $z > 0$.  This is possible because for $\frac{C_2}{2T} \leq \vert z \vert \leq \frac{C_2}{T}$, 
\[  \frac{k_\pm z+1}{\frac{2}{3} T^2k_\pm z^3 + T^2 z^2 + 1} = \frac{1}{1+(Tz)^2}  \pm \pi  \psi_0(p_D) \frac{z+\frac{T^2z^3}{3}}{(1+(Tz)^2)^2}+ \mathcal{O}(T^{-1}) .\]
 Therefore a smooth interpolation of $T^{-2}h$ and $ \frac{k_\pm z+1}{\frac{2}{3} T^2k_\pm z^3 + T^2 z^2 + 1}$ in this region differs from $T^{-2}h$ by a function that is $\mathcal{O}(T^{-1})$. But the zero eigenfunction $\psi_0$ is constant, so by Equation \ref{deltah_decay}, the $D$-dependent terms of $\delta h$ are $T\mathcal{O}((Tz)^{-4})$ for $\vert z \vert > \frac{C_2}{2T}$. Therefore after a $T^{-1} \mathcal{O}((Tz)^{-4})$ correction, we can take $T^{-2}(\tilde{h}-h)$ to be a function of $z$ only.

For simplicity of notation, the symbol $h$ will refer to the function  $\tilde{h}$ for the remainder of this paper.

\subsection{Expansion near the singular point}\label{expansionsNearSing}
Equation \ref{deltah_eqn} and the linearization of Equation \ref{maineqn2} yield a useful expansion of the metric near the singular point. 

First, we introduce a notation for describing the regularity of functions defined near a singularity. For $\ell \in  \mathbb{N}$ and $U \subseteq \mathbb{R}_{a,b,w}^3$ containing $0$, let \[r_w(a,b,w) = \sqrt{a^2 + b^2 + w^2}\] and 
\begin{equation}\label{Wspacedef}
\mathcal{W}^\ell(U) = \lbrace f \in C^{\infty}(U \setminus \lbrace 0 \rbrace): \ \nabla^j f = \mathcal{O}(r_w^{\ell-j} \log(r_w)) \ \text{as} \ r_w \rightarrow 0, \ \forall j  \geq 0\rbrace.
\end{equation}
Now for any $\ell \in \mathbb{N}$ and $U \subset \mathbb{R}_{a,b,w}^3$ containing $0$ we take $w^{\ell}$ to be a function in $W^{\ell}(U)$. Similarly, for any $k \in \mathbb{Z}_{\geq 0}$ and $\alpha \in (0,1)$, we take $g^{k,\alpha}$ to be a function in $C^{\infty}(U \setminus \lbrace p \rbrace) \cap C^{k,\alpha}(U)$. We allow $g^{k,\alpha}$, $w^{\ell}$, and the corresponding domains $U$ to change in each invocation (as is convention for the constant $C$), but it will always be assumed that they have no dependence on $T$. 

The following elementary result will provide the desired regularity for an expansion around the singular point.
\begin{lemma}\label{llemma}
Let $f(a,b,w)$ be a homogeneous polynomial of degree $k$ and $\mathcal{L}'$ a differential operator such that $\mathcal{L}'(\mathcal{W}^1) \subseteq \mathcal{W}^1$. Then there exist $u, \, v' \in \mathcal{W}^1$ such that 
\[ (\Delta_{\mathbb{R}^3}+\mathcal{L}') \, u = \frac{f}{r_w^{k+1}} + v'.  \] 
\end{lemma}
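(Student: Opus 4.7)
The plan is to invert the Euclidean Laplacian exactly on the inhomogeneity $f/r_w^{k+1}$ using a spherical-harmonic decomposition, and then absorb $\mathcal{L}' u$ into the error term $v'$ using the hypothesis on $\mathcal{L}'$.

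First I would decompose $f$ via the classical identity $f(a,b,w) = \sum_{0 \le j \le k/2} r_w^{2j}\, h_{k-2j}$, where each $h_{k-2j}$ is a harmonic homogeneous polynomial of degree $k-2j$. Writing $h_\ell = r_w^\ell Y_\ell(\theta,\phi)$ for a spherical harmonic $Y_\ell$ of degree $\ell$ on $S^2$, the inhomogeneity becomes
\[ \frac{f}{r_w^{k+1}} = \sum_{\ell \in S} r_w^{-1} Y_\ell, \qquad S = \{k,\,k-2,\,k-4,\ldots\} \cap \mathbb{Z}_{\ge 0}. \]
This reduces the problem to inverting $\Delta_{\mathbb{R}^3}$ mode by mode on the right-hand side $r_w^{-1} Y_\ell$.

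For each $\ell \in S$, I would solve $\Delta_{\mathbb{R}^3} u_\ell = r_w^{-1} Y_\ell$ by the separable ansatz $u_\ell = \phi_\ell(r_w) Y_\ell$. Since $r_w^{-1} Y_\ell$ is homogeneous of degree $-1$, the natural guess is $\phi_\ell(r_w) = c_\ell r_w$, which via $\Delta_{\mathbb{R}^3}(r_w Y_\ell) = (1-\ell)(\ell+2) r_w^{-1} Y_\ell$ gives $c_\ell = 1/[(1-\ell)(\ell+2)]$ whenever $\ell \neq 1$. The exceptional case $\ell = 1$ is the main obstacle: the indicial exponent hits a resonance and the pure power solution collapses. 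I would handle it by promoting the ansatz to $\phi_1(r_w) = c_1 r_w \log r_w$; a direct computation of the radial Laplacian yields $\Delta_{\mathbb{R}^3}(r_w \log r_w \cdot Y_1) = 3 r_w^{-1} Y_1$, so $c_1 = 1/3$ works. In either case $u_\ell \in \mathcal{W}^1$, because the defining estimates $\nabla^j u = \mathcal{O}(r_w^{1-j}\log r_w)$ for $\mathcal{W}^1$ are tailored precisely to allow both the linear and the log-corrected linear behaviour at the origin.

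Finally I would set $u := \sum_{\ell \in S} u_\ell$, which is a finite sum and hence lies in $\mathcal{W}^1$, and satisfies $\Delta_{\mathbb{R}^3} u = f/r_w^{k+1}$ exactly. Defining $v' := \mathcal{L}' u$, the hypothesis $\mathcal{L}'(\mathcal{W}^1) \subseteq \mathcal{W}^1$ immediately gives $v' \in \mathcal{W}^1$, and by construction
\[ (\Delta_{\mathbb{R}^3} + \mathcal{L}') u = \frac{f}{r_w^{k+1}} + v', \]
as required. The only nontrivial point in the whole argument is the resonance at $\ell = 1$, which is exactly the reason the target space $\mathcal{W}^1$ was defined to absorb logarithmic terms.
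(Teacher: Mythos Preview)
Your proof is correct and very close in spirit to the paper's argument. Both proofs reduce to $\mathcal{L}'=0$ by absorbing $\mathcal{L}'u$ into $v'$, and both identify the same explicit building blocks: $r_w/2$ for the constant mode, $\frac{1}{3}a\log r_w$ (and its siblings) for the resonant linear mode, and functions of the form $c\,r_w Y_\ell$ otherwise. The only structural difference is that the paper does not decompose $f$ into harmonics upfront; instead it applies the identity
\[
\Delta_{\mathbb{R}^3}\frac{p^{(k)}}{r_w^{k-1}} = -(k-1)(k+2)\frac{p^{(k)}}{r_w^{k+1}} + \frac{\Delta_{\mathbb{R}^3} p^{(k)}}{r_w^{k-1}}
\]
to the full polynomial $f$, producing a remainder $\Delta f/r_w^{k-1}$ of the same shape but degree $k-2$, and then inducts on $k$ with base cases $k=0,1$. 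Your spherical-harmonic decomposition is effectively the diagonalized version of this induction: since $\Delta h_\ell = 0$, the remainder term vanishes mode by mode and the solution is written down in closed form with no induction needed. The content is identical, and in particular the logarithmic correction at $\ell=1$ is the same key point in both arguments and is precisely what the $\log r_w$ in the definition of $\mathcal{W}^1$ is there to absorb.
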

\begin{proof} Since $\mathcal{L}'(\mathcal{W}^1) \subseteq \mathcal{W}^1$, we may assume $\mathcal{L}' = 0$. Now let $p^{(k)}$ be a homogeneous polynomial of degree $k$. Then
\[ \Delta_{\mathbb{R}^3}\frac{p^{(k)}}{r_w^{k-1}} = -(k-1)(k+2)\frac{p^{(k)}}{r_w^{k+1}} +  \frac{\Delta_{\mathbb{R}^3} p^{(k)}}{r_w^{k-1}}.\]
But $\nabla (\frac{p^{(k)}}{r_w^{j}}) = \frac{p^{(k+1)}}{r_w^{j+2}}$ for $p^{(k+1)}$ a homogeneous polynomial of degree $k+1$, so $\frac{p^{(k)}}{r_w^{k-1}}  \in \mathcal{W}^1$. In particular, $\frac{f}{r_w^{k-1}} \in \mathcal{W}^1$, so taking $u = \frac{-1}{(k-1)(k+2)} \frac{f}{r_w^{k-1}}$, we see that if $k > 1$ and the result holds for $k' < k$, it holds for $k$ as well. 

To establish the base cases $k=0,1$, we note that 
\[ \Delta_{\mathbb{R}^3} r_w = \frac{2}{r_w} \]
and 
\[ \Delta_{\mathbb{R}^3}(a \log(r_w)) = \frac{3a}{r_w^2}.\] 
\end{proof}

Now return to the space constructed above and assume that in the notation of Section \ref{hcorrection}, $\vert z \vert < \frac{C_2}{2T}$. If we let
\[ \mathcal{L}_{T,z}^h = (\Delta_D + T^{-2} \partial_z^2) + z^2 \partial_z^2 + 6z \, \partial_z + 4\]
then Equation \ref{deltah_eqn} gives that 
\[   \mathcal{L}_{T,z}^h (\delta h) = 2\pi \delta_p.\]
 Define a new coordinate 
\begin{equation}\label{w_def}
w = Tz.
\end{equation}
Changing coordinates, we have that 
\[ \mathcal{L}_{T,z}^h =   (\Delta_D + \partial_w^2) + w^2 \, \partial_w^2 + 6w \, \partial_w + 4 = \Delta_{\text{cyl}} + w^2 \, \partial_w^2 + 6w \, \partial_w + 4 = \mathcal{L}_{1,w}^h,\]
where $\Delta_{\text{cyl}}$ is the Laplacian on the Riemannian product $D \times \mathbb{R}_w$,
and 
\begin{equation}\label{h_local_eqn}
\mathcal{L}_{1,w}^h (\delta h) = T \, 2\pi \delta_p,
\end{equation}
where $\delta_p$ now refers to a delta function with respect to the coordinate $w$. 

Now let $y = a+ib$ be  a K\"ahler normal coordinate on $D$ in a neighborhood $U$ of the singular point $p_D$ and define 
\begin{equation}\label{rw_def}
r_w = \sqrt{\vert y \vert^2 + w^2} = \sqrt{a^2 + b^2 + w^2}
\end{equation}
on $U$. Then if $\omega_0$ is the standard metric and $\Delta_0 = (\partial_a^2+\partial_b^2)$ is the standard Laplacian in these coordinates, \[\Delta_D - \Delta_0 = \mathcal{O}(r_w^2)\mathcal{D},\] where $\mathcal{D}$ is a second-order differential operator with continuous coefficients with respect to $y$ and $w$. Let $\mathcal{R} =   w^2 \partial_w^2 + 6 w \, \partial_w + 4$.  Since \[  \Delta_{\text{cyl}} \, (r_w^{-1}) = 4\pi  \delta_p \]  in the sense of distributions, we find that
\begin{dmath*} \mathcal{L}^h_{1,w} \bigg(\delta h -  \frac{T}{2r_w} \bigg)   =  \mathcal{L}^h_{1,w}(\delta h) - (\Delta_0 + \partial_w^2) \bigg(\frac{T}{2r_w}\bigg) - (\Delta_D - \Delta_0) \bigg(\frac{T}{2 r_w}\bigg) - \mathcal{R} \bigg(\frac{T}{2  r_w}\bigg) 
=  T g^{0,\alpha}(a,b,w)  + \frac{T p^3(a,b,w)}{r_w^4}  + \frac{T p^4(a,b,w)}{r_w^5}, 
\end{dmath*}
where $p^k(a,b,w)$ is a homogeneous polynomial of degree $k$ in $a,b, w$ with coefficients independent of $T$. Lemma \ref{llemma} plus Schauder regularity give that
\begin{equation}\label{deltah_expansion_singularpt} 
\delta h = \frac{ T}{2 r_w} + Tg^{2,\alpha} + T w^1.
\end{equation}

Equation \ref{deltah_expansion_singularpt} yields a similar expression for $\delta \chi$. By Equation  \ref{linearized_before_deriv}, we have that for some smooth function $g^{\infty}$ on $D$,
\begin{align*}
\delta \chi &= h_0^{-1} \delta h + 2 \int z \delta h \, dz + z g^{\infty}\\
&= \frac{w^2+1}{T^2}\bigg( \frac{ T}{2 r_w} + Tg^{2,\alpha} + T w^1 \bigg) + \frac{2}{T} \int \bigg( \frac{w}{2r_w} + w \, g^{2,\alpha} +  w\, w^1 \bigg) \, dw + z g^{\infty} \\
&= \frac{1}{2T r_w}  + \frac{r_w}{T} +  T^{-1} g^{2,\alpha} + T^{-1} w^1.
\end{align*}
Since $r_w \in \mathcal{W}^1$, this implies that 
\begin{equation}\label{chi_expansion} \delta \chi =  \frac{1}{2 T \, r_w} + T^{-1}g^{2,\alpha} + T^{-1} w^1.
\end{equation}

\begin{remark}\label{deltachi_D} We can repeat the above analysis on an arbitrary coordinate path $U$ on $D$. If $U$ does not contain the singular point, we find that 
\[ \delta h\vert_U = Tg^{2,\alpha} + T w^1.\]
Since $D$ is compact, this implies that 
\[ \delta h =  \frac{T}{2r_w} + T g^{2,\alpha} + T w^1 \ \ \text{ on }D \times [-C_2,C_2]_w.\]
Similarly, 
\[ \delta \chi =  \frac{1}{2Tr_w} + T^{-1}g^{2,\alpha} + T^{-1} w^1 \ \ \text{ on }D \times [-C_2,C_2]_w.\]

 \end{remark}

\section{Construction of the Singular $S^1$ Fibration Over $D \times [-1,1/2]$}\label{m_construction}
In Section \ref{kahler_reduction}, we derived a system of equations sufficient to define a K\"ahler-Einstein metric on an $S^1$ fibration over $D \times [-1,1/2] \setminus \lbrace p \rbrace$. In Section \ref{solution}, we constructed the data $(\chi,h) = (\chi^T,h^T)$ of a family of approximate solutions to this system parameterized by $T \gg 0$. As noted in Section \ref{derivation_of_system}, there is a gauge freedom in the choice of connection on the resulting $S^1$ fibration. Now we will make a choice of connection such that the $S^1$ fibration can be completed over the singular point to a $C^{2,\alpha}$ fibration $\mathcal{M}$, allowing us to define the family $\omega_T$ of approximately K\"ahler-Einstein metrics on $\mathcal{M}$. To motivate this choice, we first consider two spaces on which we aim to model the geometry of $\mathcal{M}$: the Taub-NUT space, to describe the behavior near $p$, and the Calabi model space, to describe the behavior as $z$ approahces the ends of the interval $[-1,1/2]$. Our techniques in this section follow \cite{sz19} Section 4.1.

\subsection{Two model spaces}
\subsubsection{The Taub-NUT space}\label{taubNUT}
Let $u_1, u_2$ be coordinates on $\mathbb{C}^2$ and $y, \, \bar{y}, \,  w$ coordinates on $\mathbb{R}^3 = \mathbb{C} \oplus \mathbb{R}$. The Hopf fibration $\pi_{H}: \mathbb{C}^2 \setminus  \lbrace (0,0)\rbrace \rightarrow \mathbb{R}^3 \setminus \lbrace (0,0,0) \rbrace$ is an $S^1$ fibration given by 
\[ \pi_H(u_1,u_2) = (u_1u_2, \frac{1}{2}(\vert u_1 \vert - \vert u_2 \vert^2)) = (y,w).\] It is checked that if $r = \sqrt{\vert y \vert^2 + w^2}$, then 
\[ g_{\mathbb{C}^2} = \frac{1}{2r} \pi_H^{\ast} g_{\mathbb{R}^3} + 2r\, \Theta_0^2,\]
where \[ \Theta_0 = \frac{1}{2r} J \pi^{\ast}_H dw,\]
and $J$ is the standard complex structure on $\mathbb{C}^2$ with the convention $Jdy = -i \, dy$. The Taub-NUT space is constructed by changing $\frac{1}{2r} \rightarrow \frac{1}{2r} + a$ for some constant $a > 0$. The resulting metric is still complete and Ricci flat. 
\begin{definition} Fix $a > 0$. The Taub-NUT space with parameter $a$ on $\mathbb{C}^2$ is
\[ g_{TN,a} = \bigg(\frac{1}{2r} + a\bigg) \pi_H^{\ast} g_{\mathbb{R}^3} +\bigg( \frac{1}{2r} + a\bigg)^{-1} \Theta^2, \]
where 
\begin{equation}\label{theta0_def}
\Theta = \bigg(\frac{1}{2r} + a\bigg) J \pi^{\ast}_H dw.
\end{equation} 
\end{definition} 
The Taub-NUT space is invariant under the $S^1$ action on $\mathbb{C}^2$ that the rotates the fibers of $\pi_H$, so as in the discussion in Section \ref{derivation_of_system}, the K\"ahler form of the Taub-NUT space $g_{TN,a}$ is
\begin{equation}\label{taubNutCoords} \omega_{TN,a} = \bigg(\frac{1}{2r_w} + a \bigg)\pi_H^{\ast}\omega_{\mathbb{C}} + dw \wedge \Theta,
\end{equation}
and
\begin{equation}\label{taubNUT_curvature} d\Theta = \partial_w \pi^{\ast} \omega_{0} - dw \wedge d^c \bigg(\frac{1}{2r}+ a\bigg),
\end{equation}
where $\omega_{0} = (\frac{1}{2r}+ a) \frac{i}{2}dy \wedge d\bar{y}$ and $d^c$ is computed with respect to the complex structure on $\mathbb{C} \subset \mathbb{C} \oplus \mathbb{R}$ (see \cite{sz19} 2.3). 

Note that if we make the change of coordinates $\underline{y} = \frac{a}{b} y$,  $\underline{w} = \frac{a}{b} w$, and $r_{\underline{w}} = \sqrt{\vert \underline{y} \vert^2 + \vert \underline{w} \vert^2}$, then we have
\begin{equation}\label{rescale_TN} a \, g_{TN,a} = b \, \underline{g}_{TN,b},
\end{equation}
where $\underline{g}_{TN,b}$ is the Taub-NUT metric with parameter $b$ with respect to the new coordinates.

\subsubsection{The Calabi model space}\label{calabimodel}Let $(D,\omega_D)$ be a compact curve with $K_D$ ample and $\omega_D$ negative K\"ahler-Einstein normalized such that $\omega_D \in 2\pi  \, c_1(K_D)$. We can choose a Hermitian metric $\Vert \cdot \Vert_D$ on $K_D$ such that 
\begin{equation}\label{d_ke}
- i\partial \bar{\partial} \log \Vert \cdot \Vert_D^2 = \omega_D.
\end{equation}  Our goal is to construct a negative K\"ahler-Einstein metric with constant $-1$ on a subset of the total space of $nK_D$ for some $n \in \mathbb{Q}$. We hypothesize that there exists such a metric of the form 
\begin{equation}\label{calabiansatz} \omega =  i \partial \bar{\partial} F(-\log \Vert \cdot \Vert_D^{2n})
\end{equation}
for some function $F$. We write $x = -\log \Vert \cdot \Vert_D^{2n}$ and view $F$ as a function of $x$. 

\begin{figure}[h]
\begin{tikzpicture}[scale=0.6]
\draw[thick] (0,0) ellipse (5cm and 1cm);
\draw[thick] (-7.5,7.5) to [out=310, in=90](-5,0);
\draw (-4.5,7) to [out=300, in=95](-3,2);
\draw (-1.75,7) to [out=290, in=90](-1,2);
\draw (1.75,7) to [out=250, in=90](1,2);
\draw (4.5,7) to [out=240, in=85](3,2);
\draw[thick] (7.5,7.5) to [out=230, in=90](5,0);

\draw[dashed] (-5.7,4.5) to [out=12, in=168](5.7,4.5);

\node[black] at (-9,0) {$(D,\omega_D)$};
\draw[->] (-9,3) to (-9,1);
\node[black] at (-9,4) {$(nK_D,\Vert \cdot \Vert_D^n)$};
\node[black] at (10,2.25) {$(T_D, i \partial \bar{\partial} F(-\Vert \cdot \Vert_D^{2n}))$};
\node at (-13,2.25){};
\node at (0,-1.5){};
\node at (0,9){};

\end{tikzpicture}
\caption[The Calabi model space]{The Calabi model space. Let $(D,\omega_D)$ be a compact negative K\"ahler-Einstein curve with canonical bundle $K_D$ and $\Vert \cdot \Vert_D$ a Hermitian metric on $K_D$ with curvature $\omega_D$. The Calabi model space is the tubular neighborhood $T_D = \lbrace -1/2n < z < 0 \rbrace \subset nK_D$ equipped with the metric $\omega = i \partial \bar{\partial} F(-\Vert \cdot \Vert_D^{2n})$. }
 \label{f: construction}
\end{figure}
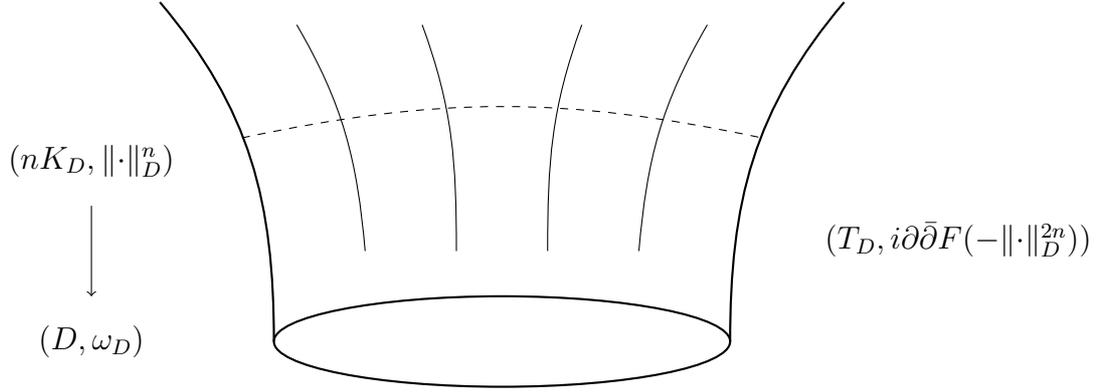

If locally $\Vert \cdot \Vert_D^2 = h\vert u \vert^2$ for a real function $h(y)$ of a local $D$ coordinate $y$ and fiber coordinate $u$, then by Equation \ref{d_ke},
\[ -i\partial \bar{\partial}\log \left(\frac{-i\partial \bar{\partial} \log(h)}{\frac{i}{2} dy \wedge d\bar{y}} \right)=  i \partial\bar{\partial}\log(h),\] which will be satisfied if
\[ -\frac{h\partial \bar{\partial}h -  \partial h \wedge \bar{\partial}h}{h^2} =  \frac{i}{2h} dy \wedge d\bar{y},\]
or equivalently
\begin{equation}\label{hidentity} i \partial \bar{\partial} x = n \frac{i}{2h} dy \wedge d\bar{y}.
\end{equation} 
Taking the Ricci curvature of $\omega$ and applying Equation \ref{hidentity}, we find that it is sufficient to solve the ordinary differential equation
\begin{equation}\label{calabiode} \log(F' F'')+\frac{x}{n}- F = C
\end{equation}
for any constant $C$. 

The generator of the natural $S^1$ action on $nK_D$ is
\[ \xi = i(u \, \partial_u - \bar{u} \, \partial_{\bar{u}}),\] 
so if a metric is defined by Equation \ref{calabiansatz}, 
\[ h^{-1} = \Vert \xi \Vert_{\omega}^{2} = 2\frac{d^2F}{d x^2}.\] 
In addition, by Cartan's formula, $\iota_{\xi}\omega = i \, d  \, \xi \lrcorner \bar{\partial} F(x),$
so
\[ z(x) =  \frac{dF}{dx}-\frac{1}{n}\]
is a moment map for such a metric. Let \[\mu(z) = \frac{nz(x)^3}{3}+\frac{z(x)^2}{2}\] and define \[F(x) = \log(\mu(z(x)))+\frac{x}{n}.\] Then
\[ z(x) =  \bigg(\frac{\mu'}{\mu} \bigg) \frac{dz}{dx}, \]
while 
\[ h^{-1} = 2 \frac{dz}{dx} = 2\bigg(\frac{z \mu}{\mu'}\bigg),\]
and it is easily verified that $F$ satifies Equation \ref{calabiode}.

By the discussion in Section \ref{kahler_reduction}, $\omega$ satisfies Equation \ref{omega_defn}. A computation with Equations \ref{omega_defn} and  \ref{calabiansatz} then gives that $\tilde{\omega} = (1+nz) \omega_D$.

Note that \[ x = \int^z \frac{ns + 1}{\frac{ns^3}{3} + \frac{s^2}{2}} \, ds + C.\]
The integrand is positive for $-1/n < z < 0$. Restricting $z$ to this interval, we see that since $x \rightarrow \infty$ near the zero section, $z$ increases towards the zero section as well. We define the Calabi model space as the subset \[ T_D = \lbrace -1/2n < z < 0 \rbrace \subset nK_D\] equipped with the metric $\omega$. This space is punctured along the zero section and has a ``tubular'' boundary. 

For convenience of notation we consider the $n = 0$ case as part of this family. In this case the metric takes the $D$-invariant form described in Equation \ref{ansatz} and we define $T_D = \lbrace -1 < z < 0 \rbrace$.

%

\subsection{Construction of $\mathcal{M}^{\ast}$}\label{hopf_identification}
Our neck region $\mathcal{M}$ should resemble a Calabi model space at each end, but the two Calabi model spaces have different degrees and are oriented in opposite directions. Concretely, if $\Theta = h J dz$ as in Equation \ref{kahlerCondition} and taking $k_\pm$ as in the statement of Theorem \ref{mainthm}, near $\partial \mathcal{M}_-$ we must have that \[h = \frac{k_- z + 1}{\frac{2}{3}k_- z^3 + z^2 + T^{-2}}\] while near $\partial \mathcal{M}_+$, we must have that \[h = \frac{k_+ z + 1}{\frac{2}{3}k_+ z^3 + z^2 + T^{-2}}.\] Note that if we interpret the negative sign of $k_+$ as part of the $z$ coordinate, $\Theta$ behaves as if $z$ increases towards the zero section, as in our discussion of the Calabi model space. This explains why $k_- \geq 0$ and $k_+ \leq 0$. 

Recall the $2$-form $\Gamma = \partial_z \tilde{\omega} - dz \wedge d^c h$ on $D \times [-1,1/2] \setminus \lbrace p \rbrace$. By Equation \ref{kahlerCondition}, $d\Theta = \Gamma$, so $\Gamma$ is closed. By Lemma \ref{gammaIntCoho}, $\frac{1}{2\pi}\Gamma$ is the curvature of a connection form of a singular $S^1$ bundle over $D \times [-1,1/2]$. This $S^1$ bundle will become the manifold $\mathcal{M}$ underlying our family of metrics, and the change from $k_-$ to $k_+$ will correspond to a change in the degree of its restriction to $D$. 

In Section \ref{backwards} we made the choice to add a singularity of the form $2\pi\delta_p$ to Equation \ref{maineqn2}. We can now see that this singularity increments the degree of the circle bundle corresponding to $\Gamma$ by one, corresponding to the assumption that $k_- -k_+   = 1$. Larger increments between $k_-$ and $-k_+$ are achieved by adding multiple separate singular points to the neck region, for instance by choosing several points $p_1,...,p_n$ on $D \times \lbrace 0 \rbrace$ and changing the inhomogeneous term to $2 \pi \sum_{i=1}^n \delta_{p_i}$. We then add the linearized solutions constructed in Section \ref{solution} and the analysis goes through without change.

\begin{lemma}\label{gammaIntCoho}
For all $T > 0$, $\frac{1}{2 \pi} \Gamma \in H^2(D \times [-1,1/2] \setminus \lbrace p \rbrace;\mathbb{R})$ is integral.
\end{lemma}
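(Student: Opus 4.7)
The plan is to compute $H^2(D \times [-1, 1/2] \setminus \{p\}; \mathbb{Z})$, exhibit generators, and verify that $\frac{1}{2\pi}\Gamma$ pairs integrally with each. Since $D \times [-1, 1/2]$ deformation retracts to $D$, and removing a single interior point of a $3$-manifold-with-boundary introduces exactly one new $H^2$ class (the local linking sphere), the long exact sequence of the pair $(D \times I, D \times I \setminus \{p\})$ together with excision yields
\[ H^2(D \times [-1, 1/2] \setminus \{p\}; \mathbb{Z}) \cong \mathbb{Z}\langle[D \times \{z_0\}]\rangle \oplus \mathbb{Z}\langle[S_\epsilon]\rangle, \]
where $z_0 \in (-1,0) \cup (0,1/2)$ and $S_\epsilon$ is a small linking $2$-sphere around $p$. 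It suffices to check integrality on each.

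For the sphere integral, I would apply Stokes to the $3$-chain $D \times [-\delta, \delta] \setminus B_\epsilon(p)$, on which $\Gamma$ is closed, obtaining $\int_{S_\epsilon}\Gamma = \int_{D \times \{\delta\}}\Gamma - \int_{D \times \{-\delta\}}\Gamma$. Since $dz$ pulls back trivially to a horizontal slice, $\Gamma|_{D \times \{z\}} = \partial_z\tilde\omega = (\partial_z\chi)\omega_D$. Integrating Equation \ref{maineqn2} against the volume form $dz \wedge \omega_D$ over $D \times [-\delta, \delta]$, the term $\int_D \Delta_D h \cdot \omega_D$ vanishes by the divergence theorem on the closed surface $D$, leaving only the $2\pi$ contribution from the delta. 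Hence $\int_{S_\epsilon}\Gamma = 2\pi \in 2\pi\mathbb{Z}$, with the obvious generalization to multiple delta insertions.

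The slice integral $\int_{D \times \{z_0\}}\Gamma$ is locally constant in $z_0 \in (-1,0)$ by closedness of $\Gamma$, so I would evaluate it by taking $z_0$ near the left end. The construction in Section \ref{hcorrection} was engineered so that $(h,\chi)$ matches the $k_-$-Calabi model with $k_- = 0$ there: $h$ is $D$-independent and the integration constants in the $z$-antiderivative defining $\chi$ are pinned so that $\chi \equiv 1$, forcing $\partial_z \chi \equiv 0$. Although the approximate solution differs from the model by $\mathcal{O}(T^{-1})$ and $T\mathcal{O}((Tz)^{-4})$ error terms (Proposition \ref{deltah_decay_prop}), the $D$-average $z \mapsto \int_D (\partial_z \chi)\,\omega_D$ is \emph{exactly} piecewise constant in $z \neq 0$ by the same divergence-theorem argument used for $S_\epsilon$, so the $z$-dependent corrections contribute nothing. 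This gives $\int_{D \times \{z_0\}}\Gamma = 0$ for $z_0 < 0$, and combined with the jump in the previous paragraph, $\int_{D \times \{z_0\}}\Gamma = 2\pi$ for $z_0 > 0$, both in $2\pi\mathbb{Z}$.

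The main obstacle is the slice computation: the integral is a topological invariant rather than an asymptotic statement, so one must determine its value exactly, not merely up to the error terms present in the construction. The resolution is the observation that the $D$-average of $\partial_z \chi$ is locally constant by Equation \ref{maineqn2} (because the $\Delta_D h$ term has vanishing $D$-integral), so only the $z$-independent part of $\partial_z\chi$ contributes to the slice integral; that part is fixed by the Calabi-model boundary condition at the left end built into Section \ref{hcorrection}. Once this identification is made, both generator pairings are integer multiples of $2\pi$, so $[\Gamma/(2\pi)] \in H^2(D \times [-1,1/2] \setminus \{p\};\mathbb{Z})$.
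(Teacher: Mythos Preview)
Your proposal is correct and follows the same overall skeleton as the paper: reduce via the generators of $H^2$ to a slice integral and a linking-sphere integral, then evaluate each. There is, however, a genuine methodological difference worth noting in the sphere computation.

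For $\int_{S_\epsilon}\Gamma$, the paper takes the limit $\epsilon'\to 0$ and computes explicitly using the local singular expansions of $\delta h$ and $\delta\chi$ derived in Section~\ref{expansionsNearSing} (Equations~\ref{deltah_expansion_singularpt} and~\ref{chi_expansion}); this is a direct pointwise calculation of the leading $r_w^{-3}$ behavior of $\Gamma$. Your approach instead applies Stokes on $D\times[-\delta,\delta]\setminus B_\epsilon(p)$ to rewrite the sphere integral as the jump of the slice integral across $z=0$, and then reads off that jump from Equation~\ref{maineqn2} by integrating over $D$ and using $\int_D\Delta_D h\,\omega_D=0$. This is more elementary: it bypasses the singular expansion entirely and extracts the integer directly from the coefficient of $\delta_p$ in the PDE. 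The paper's route, on the other hand, gives independent confirmation that the singular model from Section~\ref{expansionsNearSing} really does carry the expected topological charge, which is useful later when identifying the local geometry with Taub--NUT.

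For the slice integral, both arguments hinge on the same observation that $\int_D\partial_z\tilde\omega$ is exactly locally constant in $z$ away from $0$ (since $\partial_z^2\tilde\omega=-\Delta_D h\cdot\omega_D$ is $d_D$-exact). The paper then evaluates this constant by extending to $|z|\to\infty$ via Remark~\ref{soln_extension} and invoking the decay from Proposition~\ref{deltah_decay_prop}; you instead appeal to the integration constant ``$a=k_-=0$'' fixed in Section~\ref{hcorrection}. These are two ways of pinning down the same zero-mode data, and either suffices once one recognizes that only the $D$-average of $\partial_z\chi$ matters.
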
 
\begin{proof} 
By a Mayer-Vietoris argument it suffices to show that $\frac{1}{2 \pi} \Gamma$ integrates to an integer over a slice $D \times \lbrace z_0 \rbrace$ for some $z_0 \neq 0$ and over the boundary of a ball \[ S_{\epsilon}(p) = \lbrace q: \ r_w(q) \leq \epsilon \rbrace\] for some sufficiently small $\epsilon > 0 $.

For the first integral, note that since 
\[ \partial_z^2 \tilde{\omega} = -(\Delta_D h) \, \omega_D,\]
away from $z = 0$, we have that
\[ \partial_z [\partial_z \tilde{\omega}] =  [\partial_z^2 \tilde{\omega}] = 0 \in H^2(D;\mathbb{R}),\]
and therefore $[\partial_z \tilde{\omega}]$ is constant in $z$.  However, by Proposition \ref{deltah_decay_prop},
\[ \partial_z \tilde{\omega} = (k_{\pm} + \mathcal{O}(T^{-3})) \, \omega_D\]
for large $z$. Thus letting $\vert z \vert \rightarrow \infty$ (see Remark \ref{soln_extension}) shows that $[\partial_z \tilde{\omega}] = k_{\pm}[\omega_D]$. The component $dz \wedge d^c h$ does not contribute to the integral of $\Gamma$ over $D$. 

For the second integral, observe that by Stokes' theorem, 
\[ \int_{\partial S_{\epsilon}(p)}\frac{1}{2 \pi}\Gamma = \lim \limits_{\epsilon' \rightarrow 0}  \int_{\partial S_{\epsilon'}(p)} \frac{1}{2 \pi}\Gamma.\]
Now by Equation \ref{chi_expansion} we have for $r_w$ small that
\[ \partial_z \tilde{\omega} = \left(-\frac{Tz}{4r_w^3}\right)  i \, dy \wedge d\bar{y} + \mathcal{O}(\log(r_w))\]
and by Equation \ref{deltah_expansion_singularpt} we have that
\[ dz \wedge d^c h = \frac{-iT}{4r_w^3} dz \wedge (y \, d\bar{y} - \bar{y} \, dy) + \mathcal{O}(\log(r_w)).\]
But
\[ d(z \,  dy \wedge d\bar{y}  - y \, dz \wedge d\bar{y} + \bar{y} \, dz \wedge dy) = 3\, dz \wedge dy \wedge d\bar{y},\]
so 
\begin{dmath*} \lim \limits_{\epsilon' \rightarrow 0} \int_{\partial S_{\epsilon'}(p)} \frac{1}{2 \pi}\Gamma 
= \lim \limits_{\epsilon' \rightarrow 0} \int_{S_{\epsilon'}(p)} -\frac{3iT}{8\pi (\epsilon')^3} dz \wedge dy \wedge d\bar{y} + \mathcal{O}((\epsilon')^2\log(\epsilon'))
= -\lim \limits_{\epsilon' \rightarrow 0} \int_{S_{\epsilon'}(p)} \frac{3}{4 \pi (\epsilon')^3} \bigg(\frac{i}{2} \, dy \wedge d\bar{y} \bigg) \wedge dw
= -1,
\end{dmath*}
as desired.
\end{proof}

By Lemma \ref{gammaIntCoho}, there is an $S^1$ bundle $\pi: \mathcal{M}^{\ast} \rightarrow D \times [-1,1/2] \setminus \lbrace p \rbrace$ with connection form $-i\Theta'$ whose curvature is $-i\Gamma$. The connection form $-i\Theta'$ is not unique, but we fix an arbitrary choice. Note that the spaces $\mathcal{M}^{\ast}$ are diffeomorphic for all $T$, since changing $T$ only rescales all data with respect to the $z$ coordinate. 

\subsection{Compactification of $\mathcal{M}^{\ast}$}\label{compactification} Now we can construct a compactification of $\mathcal{M}^{\ast}$ modeled on the Taub-NUT space. As manifolds this only involves adding a point to $\mathcal{M}^{\ast}$ and it is easily seen that the resulting spaces will be diffeomorphic for all $T$. Let $(y,\bar{y},w)$ be the coordinates on $D \times [-1,1/2]$ in the punctured neighborhood $U' = U \times [-1,1/2] \setminus \lbrace p \rbrace$, where $U$ is as defined in Section \ref{expansionsNearSing}. If $V = \pi^{-1}(U')$, then in some coordinates $u_1,u_2$, $V$ gives a subset of $\mathbb{C}^2\setminus \lbrace 0 \rbrace$, and we can define the Hopf fibration $\pi': V \rightarrow U'$ by
\begin{equation}\label{hopf} (u_1,u_2) \rightarrow (u_1 u_2, \frac{1}{2}(\vert u_1 \vert^2 - \vert u_2 \vert^2)).
\end{equation}
Now $\pi$ and $\pi'$ are $S^1$ fibrations of the same degree, so there is an $S^1$-equivariant smooth map $\phi: V \rightarrow \mathcal{M}$, diffeomorphic with its image, such that $\phi^{\ast} \pi = \pi'$. In other words, we can choose complex-valued (though not necessarily holomorphic) coordinates $u_1$ and $u_2$ such that $\pi$ is the Hopf fibration given in coordinates by Equation \ref{hopf}, i.e. $y = u_1 u_2$, $w = \frac{1}{2}(\vert u_1 \vert^2 - \vert u_2 \vert^2)$, and the connection is given by $\phi^{\ast} (-i\Theta')$. 

From now on we will assume we have made such a transformation and take $u_1$, $u_2$, and $\pi$ as in the Hopf fibration. In addition, we define in these coordinates
\begin{equation}\label{sdef} 
s^2 = \vert u_1 \vert^2 + \vert u_2 \vert^2.
\end{equation}
Notice that 
\begin{equation}\label{s_vs_r}
\pi^{\ast}r_w = \frac{1}{2} s^2.
\end{equation}

The connection form determines the metric on $\mathcal{M}^{\ast}$ via Equation \ref{omega_defn}. However, our arbitrary choice of connection may not have the desired behavior near $p$. Now we modify $\mathcal{M}^{\ast}$ by a Gauge transformation so that near $p$ the metric on $\mathcal{M}^{\ast}$ differs from the Taub-NUT space by a form of sufficiently high regularity. This will give a $C^{2,\alpha}$ compactification of $\mathcal{M}^{\ast}$ over $p$, completing our construction of the singular $S^1$ fibration $\mathcal{M}$ and its $C^{2,\alpha}$ K\"ahler structure. 

We adopt the coordinates $u_1$, $u_2$ defined above so that the map $\pi: \mathcal{M}^{\ast} \rightarrow D \times [-1,1/2] \setminus \lbrace p \rbrace$ is identified with the Hopf fibration. Define $\Theta_0$ by Equation \ref{theta0_def}, taking $a = T$, $J$ the natural complex structure on $\mathbb{C}_{u_1,u_2}^2$, and with $\pi_H$ given by $\pi$. Note also that by our choice of coordinates in Section \ref{hopf_identification}, $r = r_w$. By Equations \ref{deltah_expansion_singularpt}, \ref{chi_expansion}, and  \ref{taubNUT_curvature}, we have that near $p \in D \times [-1,1/2]$,
\begin{dmath*}
\Gamma- d \Theta_0 = \partial_z(\tilde{\omega} - T^{-1}\omega_{0}) + dw \wedge d^c_D  \bigg(  \frac{h}{T}-\frac{1}{2r_w} -T \bigg)
= \partial_z (g^{2,\alpha} + w^1) \frac{i}{2} dy \wedge d\bar{y} + dw \wedge d^c_D (g^{2,\alpha}+w^1)
= c_1 \, dy \wedge d\bar{y} + c_2 \, dw \wedge dy + c_3 \, dw \wedge d\bar{y}
\end{dmath*}
for some $c_1 \in C^{1,\alpha}(U) + W^0(U)$ and $c_2, c_3 \in C^{2,\alpha}(U) + W^1(U)$. Pulling back to $\mathcal{M}^{\ast}$, we have that with respect to the flat metric in $u_1, \, u_2$,
\begin{equation}\label{c1_regularity}
\pi^{\ast} (c_1 \, dy \wedge d\bar{y}) =  (g^{1,\alpha} + w^{0}) \, \mathcal{O}(s^2) = g^{1,\alpha},
\end{equation}
while by the same argument the $c_2$ and $c_3$ term have higher regularity. Therefore we denote $\omega^{1,\alpha} = c_1 \, dy \wedge d\bar{y} + c_2 \, dw \wedge dy + c_3 \, dw \wedge d\bar{y}$. 

If we impose the gauge-fixing condition $d^{\ast} \theta = 0$, then for some $\epsilon > 0$, we can solve the elliptic system 
\[ \left\lbrace \begin{array}{l} d\theta = \omega^{1,\alpha} \\
d^{\ast} \theta = 0 \\ 
\theta(\nu) = 0 \text{ on } \delta B_{\epsilon}(p) \end{array} \right. \] 
on $B_{\epsilon}(p)$, where $\nu$ is the unit normal to $B_{\epsilon}(p)$. The resulting one-form $\theta$ is smooth away from $\pi^{-1}p$ since $\omega^{1,\alpha}$ is, and since the system is $S^1$ invariant, $\theta$ can be chosen $S^1$-invariant by averaging. We also have that $\theta \in C^{2,\alpha}(B_{\epsilon}(p))$, since 
\[ \Delta \theta = d^{\ast} \omega^{1,\alpha} \in C^{0,\alpha}(B_\epsilon(p)). \]
Finally, $\theta(\partial_t) = 0$. For by Cartan's formula, $d(\iota_{\partial_t}\theta) = -\iota_{\partial_t} \omega^{1,\alpha} = 0$, so $\theta(\partial_t)$ is constant. But $\partial_t \rightarrow 0$ near $p$, so by the regularity of $\theta$, $\theta(\partial_t) = 0$.

Now define $\Theta =  \Theta_0 + \theta$. Because $d\Theta = \Gamma = d\Theta'$, there is a gauge transformation that takes $\Theta'$ to $\Theta$. Since gauge transformations only rotate the fibers of the $S^1$ bundle, $\pi$ is still modelled on the Hopf fibration as in Equation \ref{hopf}. However, we will see in Section \ref{rescaledGeo} by taking $\Theta$ as our connection, $\omega$ is asymptotic to the Taub-NUT metric near $p$. Therefore $(\mathcal{M},\omega)$ is a $C^{2,\alpha}$ compactification of $\mathcal{M}^{\ast}$.

\section{Limiting Behavior of the Approximate Solution} \label{schauderestimatesection}
\subsection{Convergence of Riemannian manifolds} 
The space $(\mathcal{M},\omega_T)$ we constructed in Sections \ref{solution} and \ref{m_construction} is K\"ahler, but since it solves the linearization of Equation \ref{maineqn1} rather than the full equation, it is only approximately Einstein. We aim to argue that for large enough $T$, $\omega_T$ is sufficiently close to being Einstein that it can be perturbed to an Einstein metric. Several notions of the distance between metrics will be useful in making this argument. Our discussion in this section follows \cite{Petersen} Chapter 11. 

\subsubsection{Gromov-Hausdorff convergence}
The weakeast notion of convergence we use is Gromov-Hausdorff convergence. The Gromov-Hausdorff distance quantifies the dissimilarity between metric spaces. 

\begin{definition}Let $(X,d_X)$ and $(Y,d_Y)$ be metric spaces and let $\mathcal{A}$ be the set of metrics on $X \cup Y$ that extend $d_X$ and $d_Y$. The Gromov-Hausdorff distance is defined as
\[ d_{\text{GH}}((X,d_X),(Y,d_Y)) = \text{inf}_{d \in \mathcal{A}} \, d(X,Y),  \]
where for a metric $d$ on $X \cup Y$, $d(X,Y) = \text{inf} \lbrace \epsilon: \forall x \in X, \exists y \in Y: d(x,y) < \epsilon \rbrace$. 
\end{definition}

In Riemannian geometry, the metric spaces will be Riemannian manifolds $(M,g)$ with the metric $d_g$ induced by distance in the Riemannian metric $g$. Thus we say that $(M_i,g_i)$ converges to $(M,g)$ in the Gromov-Hausdorff topology if \[d_{\text{GH}}((M_i,d_{g_i}),(M,d_g)) \rightarrow 0\] as $i \rightarrow \infty$. In such a situation, we may also informally say that $x_i \in M_i \rightarrow x \in M$ if there exists a sequence of metrics $d_i \in \mathcal{A}$ realizing the Gromov-Hausdorff convergence such that $d(x_i,x) \rightarrow 0$.

The Gromov-Hausdorff distance defines a complete, separable metric space on the set of equivalence classes under isometry of compact metric spaces (see \cite{Petersen} 11.1.18). On noncompact spaces, we instead consider pointed Gromov-Hausdorff convergence. Let $(X,d_X,x)$ and $(Y,d_Y,y)$ be metric spaces with distinguished points $x$ and $y$. We define 
\[ d_{\text{GH}}((X,d_X,x),(Y,d_Y,y)) = \text{inf}_{d \in \mathcal{A}} \,  (d(X,Y) + d(x,y)).  \]
Then $(X_i,d_{X_i},x_i)\rightarrow (X,d_X,x)$ if for all $R > 0$, $(\bar{B}_R(x_i),d_{X_i},x_i) \rightarrow (\bar{B}_R(x),d_X,x)$ with respect to the pointed Gromov-Hausdorff distance. 

Gromov-Hausdorff convergence on its own is a relatively weak notion. It does not imply the convergence of derivatives in any sense, and limits of manifolds may not be manifolds. In addition, a sequence of $n$-dimensional manifolds may converge to a manifold of any lower dimension (by collapsing) or higher dimension (by space-filling). With additional assumptions, however, Gromov-Hausdorff convergence can imply stronger convergence. For instance, Cheeger and Naber \cite{CN} proved that a sequence of manifolds with Ricci curvature uniformly bounded and local volume noncollapsing converges smoothly outside a singular set of real codimension at least $4$. 

The local volume noncollapsing assumption does not hold in our case, and we will see that if we rescale to a constant diameter, our spaces converge to a real interval. The usefulness of Gromov-Hausdorff convergence for our purposes is in allowing us to describe the limiting geometry of different parts of the neck region we are constructing. We will see that after an appropriate rescaling, some portions of the neck region collapse to lower-dimensional spaces. The notion of Gromov-Hausdorff convergence allows us to state the sense in which these lower-dimensional spaces occur as limits. Subsequent analysis will require passing to local universal covers to ``un-collapse'' these spaces and achieve $C^{k,\alpha}$ convergence.

\subsubsection{H\"older regularity scales}\label{holderGeneralSection}
It is often useful to discuss convergence of the derivatives of a sequence of metrics. For this purpose we measure the H\"older norms of these metrics in local coordinates. If we were to naively allow any choice of charts, however, we could ``zoom in'' to normal neighborhoods around each point and every smooth metric would look like the Euclidean metric. Therefore we must control the radius of the chart and the extent of magnification in our definition of the H\"older norm.

\begin{definition}\label{holder_general} Let $(M,g)$ be a Riemannian manifold, $p \in M$ a distinguished point, and $r > 0$. For any $k \in \mathbb{Z}_{\geq 0}$ and $\alpha \in (0,1)$, $\Vert (M,g,p) \Vert_{C^{k,\alpha},r}$ is defined as the supremum over constants $Q$ such that there exists a $C^{k+1,\alpha}$ chart $\phi: (B_r(0),0) \subset \mathbb{R}^n \rightarrow (U,p) \subset M$ satisfying the following conditions:
\begin{enumerate}
\item $\vert D\phi \vert \leq e^{Q}$ on $B_r(0)$ and $\vert D\phi^{-1} \vert \leq e^{Q}$ on $U$.
\item For all multi-indices $I$ such that $\vert I \vert \leq k$, \[r^{\vert I \vert + \alpha} \Vert \partial^I g_{lm} \Vert_{C^{0,\alpha}(B_r(0))} \leq Q.\] 
\end{enumerate}
Then globally 
\[ \Vert (M,g) \Vert_{C^{k,\alpha},r} = \sup_{p \in M} \Vert (M,g,p) \Vert_{C^{k,\alpha},r}.\]
\end{definition} 
%

The H\"older norm computes the largest constant for which the metric can be controlled at a fixed scale $r$. Conversely, we can ask for the largest scale at which a fixed constant $Q$ controls the metric. This gives rise to the notion of a local regularity scale.

\begin{definition}\label{generalRegScale}
Fix $\epsilon > 0$ and $r > 0$. We say that a Riemannian manifold $(M,g)$ is $(r,k+\alpha,\epsilon)$-regular at $p \in M$ if $g$ is $C^{k,\alpha}$-regular on $B_{2r}(p)$ and
\[ \Vert (M,g,p) \Vert_{C^{k,\alpha},r} \leq \epsilon.\] 
We define the $C^{k,\alpha}$-regularity scale of $(M,g)$ at $p$ to be the supremum over the set of $r$ for which $(M,g)$ is $(r,k+\alpha,\epsilon)$-regular at $p$. 
\end{definition} 
Intuitevely, the $C^{k,\alpha}$ $\epsilon$-regularity scale at $p$ is the scale on which the $C^{k,\alpha}$ geometry of $M$ is ``$\epsilon$-interesting.'' Thus it provides a choice of resolution that is in a sense uniform throughout $M$. 


\subsubsection{H\"older convergence and Einstein regularity}\label{holderConvGeneral}

The $C^{k,\alpha},r$ norm defined in Section \ref{holderConvGeneral} is useful for deriving compactness results but cannot be used to measure the distance between two metrics. For this purpose, we make use of the $C^{k,\alpha}$ topology on compact Riemannian manifolds. Instead of defining the norm locally, we use the Riemannian distance function to compute the denominator in the H\"older seminorms. Then we say that $(M_i,g_i,p_i) \rightarrow (M,g,p)$ in the pointed $C^{k,\alpha}$ topology if there exists a compact exhaustion $U_j$ of $M$ and a collection $F_{j,i}: U_j \rightarrow M_i$ of diffeomorphisms mapping $p$ to $p_i$ such that $F_{j,i}^{\ast}g_i \rightarrow g$ in the $C^{k,\alpha}$ topology on $U_j$. 

Our use of the implicit function theorem to correct $(\mathcal{M},\omega_T)$ to a K\"ahler-Einstein metric will only give $C^{0,\alpha}$ convergence. Happily, the following result of Anderson and Colding gives higher regularity without any additional work.

\begin{theorem}\label{einsteinRegularity}  \textup{\cite{Anderson, Colding}} Let $(M_i,g_i)_{i \in \mathbb{Z}}$ and $(M,g)$ be compact $n$-dimensional Riemannian manifolds such that $\text{Ric }g_i = \lambda_i g_i$ for $\vert \lambda_i \vert \leq n-1$. If $(M_i,g_i) \rightarrow (M,g)$ in the Gromov-Hausdorff topology, then $(M_i,g_i) \rightarrow (M,g)$ in $C^{k,\alpha}$ for any $k$ and $\alpha \in (0,1)$. 
\end{theorem}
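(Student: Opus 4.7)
The plan is to upgrade the Gromov-Hausdorff convergence to smooth convergence by combining volume non-collapsing with the regularizing effect of the Einstein equation in harmonic coordinates. The first step would be to show that $(M_i, g_i)$ are uniformly non-collapsed. Since $\text{Ric } g_i = \lambda_i g_i$ with $|\lambda_i| \leq n-1$, the Ricci tensor is uniformly bounded, so Colding's volume convergence theorem yields $\text{Vol}(M_i,g_i) \to \text{Vol}(M,g) > 0$. Combined with Bishop-Gromov volume comparison, this provides a uniform lower bound $\text{Vol}(B_r(p_i)) \geq v\, r^n$ for all $p_i \in M_i$ and all sufficiently small $r>0$, independent of $i$.

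Next I would invoke Anderson's harmonic radius estimate: under a two-sided Ricci bound together with uniform volume non-collapsing, there is a positive lower bound $r_0$ on the $C^{1,\alpha}$ harmonic radius at every point of $M_i$, uniform in $i$. Fix harmonic charts $\phi_{i,p}: B_{r_0}(0) \to M_i$ centered at each $p \in M_i$; in these charts the components $(g_i)_{ab}$ satisfy uniform $C^{1,\alpha}$ bounds and uniform ellipticity. In harmonic coordinates the Einstein condition becomes the quasilinear elliptic system
\[ -\tfrac{1}{2} g_i^{kl} \partial_k \partial_l (g_i)_{ab} + Q_{ab}(g_i, \partial g_i) = \lambda_i (g_i)_{ab}, \]
where $Q_{ab}$ is polynomial in $g_i$ and $g_i^{-1}$ and quadratic in $\partial g_i$, with coefficients having uniform $C^{0,\alpha}$ norms.

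The core step is a Schauder bootstrap. Starting from uniform $C^{1,\alpha}$ bounds on $(g_i)_{ab}$, interior Schauder estimates applied to the displayed equation upgrade to uniform $C^{2,\alpha}$ bounds; differentiating the equation and iterating yields uniform $C^{k,\alpha}$ bounds for every $k \geq 0$, with constants depending only on $r_0$, $k$, $\alpha$, and $n$. By Arzelà–Ascoli, after passing to a subsequence, $(g_i)_{ab} \to g_{\infty,ab}$ in $C^{k,\alpha'}(B_{r_0/2}(0))$ for any $\alpha' < \alpha$, and $\lambda_i \to \lambda_\infty$ with $|\lambda_\infty| \leq n-1$. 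The limit is therefore smooth and Einstein on each chart.

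The remaining and in my view most delicate task is to identify the smooth limit with the Gromov-Hausdorff limit $(M,g)$ and to promote the chart-wise convergence to a global statement. I would proceed by a diagonal argument over a countable exhaustion of $M$ by harmonic charts, using the uniform harmonic radius to cover any compact set with a finite number of charts whose cardinality is independent of $i$. Consistency on overlaps requires controlling transition maps between harmonic charts: since harmonic coordinates solve $\Delta_{g_i} x^a = 0$ and the Laplace operators converge as the metrics converge, elliptic regularity applied to the coordinate functions themselves gives $C^{k+1,\alpha}$ convergence of the transition maps. Glueing then produces diffeomorphisms $F_i: M \to M_i$, for $i$ large, with $F_i^\ast g_i \to g$ in $C^{k,\alpha}$; uniqueness of the Gromov-Hausdorff limit forces the smooth limit to agree with $(M,g)$ up to isometry, yielding the full-sequence convergence claimed.
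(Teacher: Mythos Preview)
The paper does not supply a proof of this theorem; it is stated as a quoted result with attribution to Anderson and Colding and is invoked as a black box in the proofs of Theorems~\ref{mainthm} and~\ref{submainthm}. There is therefore no ``paper's own proof'' to compare against.

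That said, your outline is the standard route and is essentially correct. The logical structure---Colding's volume convergence to secure non-collapsing from the hypothesis that the Gromov--Hausdorff limit is a smooth $n$-manifold, Anderson's uniform lower bound on the $C^{1,\alpha}$ harmonic radius under two-sided Ricci bounds plus non-collapsing, and then Schauder bootstrapping of the Einstein system in harmonic coordinates---is exactly how one assembles the cited results into the stated conclusion. The one place where you should be a bit more careful is the passage from subsequential $C^{k,\alpha'}$ convergence (for $\alpha' < \alpha$) to full-sequence $C^{k,\alpha}$ convergence as asserted in the theorem: the uniform $C^{k,\alpha}$ bounds together with uniqueness of the limit handle the subsequence issue, and a short interpolation argument (or simply relabeling $\alpha$) recovers the stated H\"older exponent. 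Otherwise your sketch matches what Anderson and Colding's papers actually provide.
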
 

\subsubsection{Regularity on local universal covers}\label{universalCoversSection}
We will see that Definition \ref{generalRegScale} and the notion of $C^{k,\alpha}$ convergence in Section \ref{holderConvGeneral} are too demanding for our family of spaces $(\mathcal{M},\omega_T)$. This is because as $T \rightarrow \infty$, the size of the $S^1$ fiber, given by $h^{-1}$, collapses near the singular point, so charts in this region must be correspondingly small. However, the curvature is still uniformly bounded, and if we unroll the $S^1$ fiber by taking the universal cover in a neighborhood of a point that does not contain $p$, we will find that the regularity scale is bounded from below. Using this observation, we generalize the idea of $C^{k,\alpha}$ convergence. These definitions follow \cite{sz19} 4.3. 

\begin{definition}\label{spaces_def}
Let $(M_i,g_i)$ be a sequence of Riemmanian manifolds of dimension $n$. For each $i$, a local universal cover of $(M_i,g_i)$ is the Riemannian universal cover $(\widetilde{B}_r(x_i),\tilde{g}_i)$ of $B_r(x_i) \subset M_i$ for any $x_i \in M_i$ and $r > 0$. If $(M_i,g_i)$ converges in the Gromov-Hausdorff topology to $(M,g)$ a manifold such that $\text{dim }M = n - 1$, we say that $(M_i,g_i)$ \textit{converges to $(M \times \mathbb{R},g \times g_{\mathbb{R}})$ in $C^{k,\alpha}$ on local universal covers} if whenever $x_i \rightarrow x \in M$, there exists $r > 0$ such that $\widetilde{B}_r(x_j) \rightarrow B_r(x) \times \mathbb{R}$ in $C^{k,\alpha}$. 
\end{definition}

We can also update Definition \ref{generalRegScale} (see \cite{sz19} Definition 4.22). 
\begin{definition}\label{lucRegScale}
Fix $\epsilon > 0$ and $r > 0$. We say that a Riemannian manifold $(M,g)$ is $(r,k+\alpha,\epsilon)$-regular in the sense of universal covers at $p \in M$ if $g$ is $C^{k,\alpha}$-regular on $B_{2r}(p)$ and
\[ \Vert (\tilde{B}_{2r}(\tilde{p}),\tilde{g},\tilde{p}) \Vert_{C^{k,\alpha},r} \leq \epsilon,\] 
where $(\tilde{B}_{2r}(\tilde{p}),\tilde{g})$ is the Riemannian universal cover of $B_{2r}(p)$ and $\tilde{p}$ is a preimage of $p$. We define the $C^{k,\alpha}$-regularity scale in the sense of universal covers of $(M,g)$ at $p$ to be the supremum over the set of $r$ for which $(M,g)$ is $(r,k+\alpha,\epsilon)$-regular in the sense of universal covers at $p$. 
\end{definition} 
For the remainder of this paper, when we discuss regularity scales, we mean ``in the sense of universal covers.'' However, when discussing convergence we will be explicit about whether we are referring to local universal covers or not. 
%

\subsection{Spaces of functions on $(\mathcal{M},\omega_T)$}\label{weightedSchauderGeneral}
We return to our goal of perturbing $(\mathcal{M},\omega_T)$ to an exact solution for large enough $T$. To achieve this, we must first prove a certain weighted Schauder estimate that is uniform in $T$. While in the previous section we discussed H\"older norms on families of manifolds, in this section we define H\"older spaces of functions on such families. The estimates we derive will establish the boundedness of the inverse in our use of the inverse function theorem in Section \ref{perturbation}. 

The global metric behavior of $(\mathcal{M},g_T)$ as $T \rightarrow \infty$ is not simple to describe. However, we will only need to understand the geometry near a sequence of points $(x_j)$ where $x_j \in (\mathcal{M},g_{T_j})$ for some sequence $T_j \rightarrow \infty$. We will see that after passing to a subsequence and rescaling, a sequence of neighborhoods of these points converges in the Gromov-Hausdorff sense to one of four model spaces. Further, we can achieve $C^{2,\alpha}$ convergence by passing to the local universal cover. Therefore we will see that the existence of a weighted Schauder estimate on $(\mathcal{M},g_T)$ reduces to a collection of statements about these model spaces. 

To state the desired Schauder estimate, we must first define the weight function $\rho^{(k+\alpha)}_{\delta,\nu,\mu}: \mathcal{M} \rightarrow \mathbb{R}_{> 0}$. In what follows, the functions $W$ and $\rho^{(k+\alpha)}_{\delta,\nu,\mu}$ depend on $T$, but we suppress this dependence for (relative) ease of notation. Pick $C_3 > 0$ such that $r_w$ is defined up to $C_3$ for all $T$ and let $W$ be a smooth and ``reasonable'' function such that
\begin{equation}\label{Wdef}
 W(q,T) = \begin{cases}
T^{-1} & r_w \leq T^{-1}  \\ 
r_w(q) & 2T^{-1} \leq r_w \leq \frac{C_3}{2} \\
1 & r_w \geq C_3 \text{ or $r_w$ is undefined.}
\end{cases}
\end{equation}
The significance of $W$ is that we will need to rescale $g_T$ by $W(q,T)$ to see nontrivial Gromov-Hausdorff convergence in a neighborhood of $q$. 

Now for $k \in \lbrace 0,1,2\rbrace$ define
\[ \rho^{(k+\alpha)}_{\delta,\nu,\mu} (q) = (1+w(q))^{-\delta} W(q,T)^{\nu + k  + \alpha} T^{\mu}.\]
The factor of $W(q,T)^{k  + \alpha}$ will ensure that the weight function is compatible with differentiation in the proof of Proposition \ref{schauderLocal}. The factor of $W(q,T)^{\nu}$ controls the behavior of functions at infinity from the vantage point of the singular point $p$. The factor of $(1+w(q))^{-\delta}$ controls the behavior of functions at infinity from the vantage point of the two boundary components. Finally, if $\mu$ is large enough then the factor of $T^{\mu}$ ensures that $\rho^{(0)}_{\delta,\nu+2,\mu}$ is bounded below, allowing us to take powers within the weighted H\"older space. 

\begin{figure}[h]
\begin{tikzpicture}[scale=0.6]
\draw[thick] (-10,5) to (10,5);
\draw[gray, dashed] (-10,7) to (10,7);
\draw[gray, dashed] (-10,5.5) to (10,5.5);
\draw[gray, dashed] (-10,9) to (10,9);

\draw (-10,5.5) to [out=5, in=200](1,8.7);
\draw (1,8.7) to [out=20, in =180](3,9);
\draw (3,9) to [out=0,in=140](7,7.5);
\draw (7,7.5) to [out=-40,in=180](8,7);
\draw (8,7) to (10,7);

\node[black] at (-11,9) {$T^{\mu}$};
\node[black] at (-11,7) {$T^{\mu-(\nu+2)}$};
\node[black] at (-11,5.5) {$T^{\mu-\delta}$};
\node[black] at (0,10) {Through the singular point};

\node[black] at (0,4) {$z$};
\node[black, style={rotate=90}] at (-13,7) {$\rho_{\delta,\nu,\mu}^{(2)}(p_D,z)$};

\node[black] at (-10,4.5) {$-1$};
\draw (-10,4.85) to (-10,5.15);
\node[black] at (8,4.5) {$-T^{-2}$};
\draw (8,4.85) to (8,5.15);
\node[black] at (3,4.5) {$-T^{-1}$};
\draw (3,4.85) to (3,5.15);
\node[black] at (10,4.5) {$0$};
\draw (10,4.85) to (10,5.15);


\draw[thick] (-10,-3) to (10,-3);
\draw[gray, dashed] (-10,-2.5) to (10,-2.5);
\draw[gray, dashed] (-10,1) to (10,1);

\draw (-10,-2.5) to [out=5, in=200](1,0.7);
\draw (1,0.7) to [out=20, in =180](3,1);
\draw (3,1) to (10,1);

\node[black] at (-11,1) {$T^{\mu}$};
\node[black] at (-11,-2.5) {$T^{\mu-\delta}$};
\node[black] at (0,2) {Away from the singular point};

\node[black] at (0,-4) {$z$};
\node[black, style={rotate=90}] at (-13,-1) {$\rho_{\delta,\nu,\mu}^{(2)}(q,z)$} ;

\node[black] at (-10,-3.5) {$-1$};
\draw (-10,-3.15) to (-10,-2.85);
\node[black] at (8,-3.5) {$-T^{-2}$};
\draw (8,-3.15) to (8,-2.85);
\node[black] at (3,-3.5) {$-T^{-1}$};
\draw (3,-3.15) to (3,-2.85);
\node[black] at (10,-3.5) {$0$};
\draw (10,-3.15) to (10,-2.85);

\end{tikzpicture}
\caption[The weight function $\rho^{(2)}_{\delta,\nu,\mu}$]{The weight function $\rho^{(2)}_{\delta,\nu,\mu}$ as a function of $z$ along two cross sections $(p,t) \in D \times I$ for $t \in (-1,0)$. In the first case, the cross section goes through the singular point $(p_D,0)$, while in the second case, it takes the form $\lbrace (q,t), t \in (-1,0) \rbrace$ for some point $q \in D$ such that $r_w((q,0)) > C_3$. Parameters are chosen in accordance with Theorem \ref{schauderGlobal}, and in addition we are assuming that $\nu + 2 < \delta$, though the opposite may be true.}
 \label{f: weightfxn}
\end{figure}
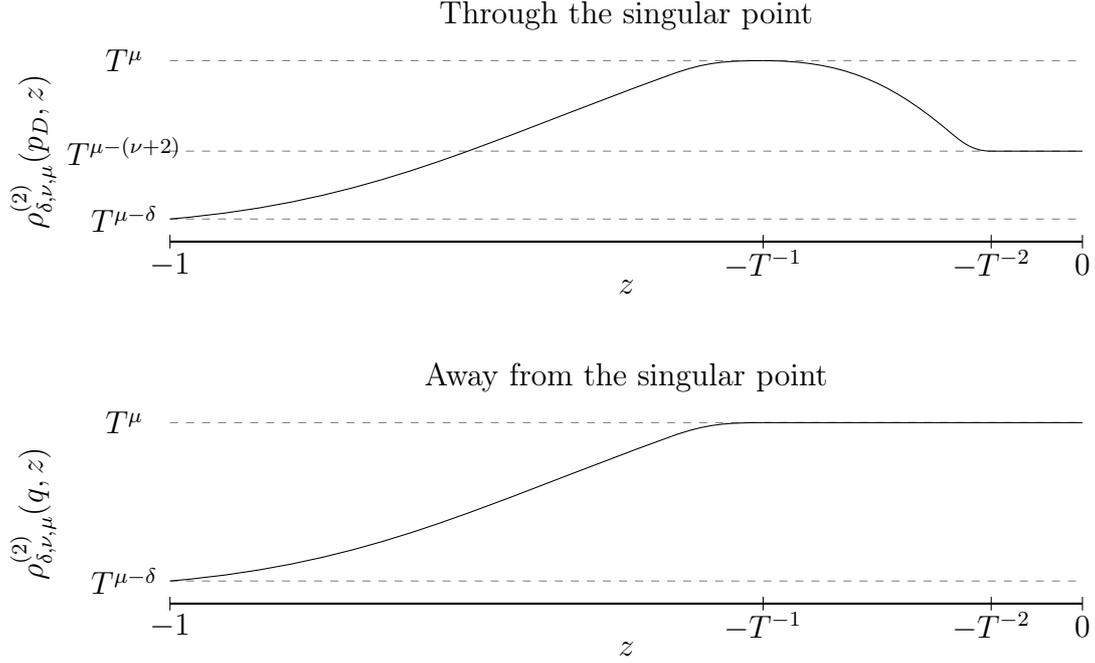 

For carefully chosen parameters, $\rho^{(k+\alpha)}_{\delta,\nu,\mu}$ will give us an appropriate weight function to define weighted H\"older spaces on $(\mathcal{M},g_T)$. As in Section \ref{universalCoversSection}, our definition of these H\"older spaces differs from the standard theory in that distances are measured on local universal covers.

\begin{definition} Fix $T > 0$. Let $K \subset \mathcal{M}$ be a compact subset and $\chi \in T^{r,s}(K)$ an $(r,s)$-tensor field. Let a tilde denote the lift of an object to the Riemannian universal cover of $B_{W(x,T)}(x)$. The weighted $C^{k,\alpha}$ seminorm is defined by 
\[ [ \chi ]_{C^{k,\alpha}_{\delta,\nu,\mu}(x)} = \sup \limits_{\tilde{y} \in B_{W(x,T)}(\tilde{x})} \rho^{(k+\alpha)}_{\delta,\nu,\mu}(x) \frac{\vert \nabla^k\tilde{\chi}(\tilde{x}) - \nabla^k \tilde{\chi}(\tilde{y}) \vert}{\tilde{d}(\tilde{x},\tilde{y})^{\alpha}}.
\]
and 
\[ [ \chi ]_{C^{k,\alpha}_{\delta,\nu,\mu}(K)} = \sup \limits_{x \in K}  \, [ \chi ]_{C^{k,\alpha}_{\delta,\nu,\mu}(\mathcal{M})}(x).\] 
The difference $\vert \nabla^k\tilde{\chi}(\tilde{x}) - \nabla^k \tilde{\chi}(\tilde{y}) \vert$ is computed by parallel transporting $\nabla^k \tilde{\chi}(\tilde{y})$ to $\tilde{x}$. We also write
\[ [ \chi ]_{C^{k}_{\delta,\nu,\mu}(K)} = \sup \limits_{x \in K} \, [ \chi ]_{C^{k}_{\delta,\nu,\mu}(x)} = \sup \limits_{x \in K} \, \vert \rho^{(k)}_{\delta,\nu,\mu} \nabla^k \chi \vert.\] 
The weighted $C^{k,\alpha}$ norm of $\chi$ is then constructed from the seminorm in the usual way:  
\[ \Vert \chi \Vert_{C^{k,\alpha}_{\delta,\nu,\mu}(K)} = \sum_{m=0}^k [ \chi ]_{C^{m}_{\delta,\nu,\mu}(K)} + [ \chi ]_{C^{k,\alpha}_{\delta,\nu,\mu}(K)}.\]
 \end{definition}
 
 We can now state the main theorem of this section.

\begin{theorem}\label{schauderGlobal} For each $T$, define the operator
\[ \mathcal{L}_T = \Delta_{g_T}-1.\]
There exists $\delta_0 > 0$ such that for all $\alpha \in (0,1)$, if $\nu \in (-2,-3/2)$, $\delta \in (0,\delta_0)$, and $\mu \in (\text{max}(\delta,\nu+2), 1)$,  there exists $C > 0$ such that for large enough $T$, 
\[ [ u ]_{C^{2,\alpha}_{\delta,\nu,\mu}(\mathcal{M})} + [  u ]_{C^{2}_{\delta,\nu,\mu}(\mathcal{M})}  + \Vert u \Vert_{C^{0,\alpha}_{\delta,\nu+2,\mu}(\mathcal{M})}  \leq
C \Vert \mathcal{L}_T u \Vert_{C^{0,\alpha}_{\delta,\nu+2,\mu}(\mathcal{M})} \] 
for all $u \in C^{2,\alpha}(\mathcal{M})$ such that $\frac{\partial u}{\partial n} \vert_{\partial \mathcal{M}} = 0$.
\end{theorem}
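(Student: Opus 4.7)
\emph{Strategy.} My plan is to reduce the inequality to a standard interior Schauder estimate on rescaled local universal covers, and then to handle the remaining $C^0$-bound by a contradiction argument that uses Theorem~\ref{submainthm} to blow up to one of the four model spaces, where a Liouville-type theorem finishes the job. The weights are engineered so that $\rho^{(k+\alpha)}_{\delta,\nu,\mu}/\rho^{(0)}_{\delta,\nu+2,\mu} = W^{k+\alpha-2}$, which is precisely the factor produced by pulling back $\Delta_{g_T}-1$ under the dilation $g_T \mapsto W(x,T)^{-2}g_T$ (using $\Delta_{W^{-2}g_T} = W^2 \Delta_{g_T}$ for functions). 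The range $\mu > \max(\delta,\nu+2)$ keeps $\rho^{(0)}_{\delta,\nu+2,\mu}$ uniformly bounded below, the bounds $\mu<1$ and $\nu+2<1/2$ will be needed for Liouville on the non-Calabi models, and $\delta<\delta_0$ will be dictated by the indicial roots arising in the Calabi-model Liouville.

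\emph{Local estimate.} At any $x\in\mathcal{M}$, Theorem~\ref{submainthm} identifies the rescaled metric $\tilde g_T := W(x,T)^{-2}g_T$ on the unit $\tilde g_T$-ball in the local universal cover with one of the four model geometries, with $C^{2,\alpha}$-coefficient bounds uniform in $x$ and $T$. Consequently the rescaled operator $\widetilde{\mathcal{L}}_T := W^2\mathcal{L}_T = \Delta_{\tilde g_T}-W^2$ is uniformly elliptic with uniformly $C^{0,\alpha}$-bounded coefficients, and classical interior Schauder (with a reflection across $\partial\mathcal{M}$ when $x$ is near the boundary, justified by the Neumann condition) gives
\begin{equation*}
W^{2+\alpha}[\nabla^2 u]_{C^\alpha}(x) + W^2|\nabla^2 u|(x) \;\leq\; C\bigl(\Vert \mathcal{L}_T u\Vert_{C^{0,\alpha}(\tilde B_2)} + \Vert u\Vert_{C^0(\tilde B_2)}\bigr).
\end{equation*}
Since $(1+w)^{-\delta}$ and $W$ vary by a uniformly bounded factor across $\tilde B_2$, multiplying by $(1+w(x))^{-\delta}W(x,T)^\nu T^\mu$ and taking the sup over $x$ yields
\begin{equation*}
[u]_{C^{2,\alpha}_{\delta,\nu,\mu}} + [u]_{C^{2}_{\delta,\nu,\mu}} \;\leq\; C\bigl(\Vert \mathcal{L}_T u\Vert_{C^{0,\alpha}_{\delta,\nu+2,\mu}} + \Vert u\Vert_{C^{0,\alpha}_{\delta,\nu+2,\mu}}\bigr),
\end{equation*}
and standard interpolation on the same ball absorbs a small multiple of the $C^{2,\alpha}$-seminorm back into the left-hand side. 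The theorem therefore reduces to the uniform inequality $\Vert u\Vert_{C^{0,\alpha}_{\delta,\nu+2,\mu}} \leq C\Vert\mathcal{L}_T u\Vert_{C^{0,\alpha}_{\delta,\nu+2,\mu}}$, which by one more application of local Schauder reduces to its $C^0$-version.

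\emph{Contradiction and blow-up.} Assuming this fails, I extract $T_j\to\infty$ and $u_j$ with $\Vert u_j\Vert_{C^0_{\delta,\nu+2,\mu}}=1$ and $\Vert\mathcal{L}_{T_j}u_j\Vert_{C^{0,\alpha}_{\delta,\nu+2,\mu}}\to 0$, and pick $x_j$ with $\rho^{(0)}_{\delta,\nu+2,\mu}(x_j)|u_j(x_j)|\geq 1/2$. Set $\tilde u_j := \rho^{(0)}_{\delta,\nu+2,\mu}(x_j)\,u_j$ and rescale by $W(x_j,T_j)$. Theorem~\ref{submainthm} gives, up to subsequence, smooth pointed convergence on local universal covers of $(\mathcal{M},W(x_j,T_j)^{-2}g_{T_j},x_j)$ to one of the four model spaces, and the local estimate of the previous step bootstraps $\tilde u_j$ to a nonzero $C^{2,\alpha'}_{\mathrm{loc}}$ limit $u_\infty$ satisfying $(\Delta_{g_\infty}-W_\infty^2)u_\infty = 0$ with $W_\infty := \lim W(x_j,T_j)\in[0,1]$. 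The ratio $\rho^{(0)}_{\delta,\nu+2,\mu}(x_j)/\rho^{(0)}_{\delta,\nu+2,\mu}(\cdot)$ yields an explicit polynomial/exponential growth bound on $u_\infty$ at infinity. For Taub-NUT, $\mathbb{C}\times\mathbb{R}$, and $D\times\mathbb{R}$ one has $W_\infty=0$ so $u_\infty$ is harmonic; in the two collapsed cases I work on the local universal cover, where Liouville for entire harmonic functions of growth order less than one (guaranteed by $\nu+2<1/2$ and small $\delta$) forces $u_\infty$ to be constant, and then the $(1+w)^{-\delta}$ decay at infinity in the $w$-direction forces $u_\infty\equiv 0$, contradicting $|u_\infty|\geq 1/2$ at the base point.

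\emph{Main obstacle.} The hardest step will be the Liouville theorem on the Calabi model $\mathcal{C}_\pm$, where $W_\infty=1$ and the limit equation is the genuine linearization $(\Delta-1)u_\infty=0$ rather than Laplace's equation. There is no simple polynomial-growth dichotomy, and I must instead expand $u_\infty$ in $\Delta_D$-eigenfunctions and analyze each radial mode via the hypergeometric ODE of Section~\ref{solution}. The threshold $\delta_0$ is forced to be strictly less than the smallest positive indicial exponent of the $\lambda=0$ mode at the cylindrical Calabi end, so that the $(1+w)^{-\delta}$ weight simultaneously rules out the growing and the slowly-decaying homogeneous solution in each mode, while the Neumann condition inherited from $\partial\mathcal{M}$ under the blow-up (possibly after a reflection-doubling across the boundary) kills the remaining freedom. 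Tracking these constraints consistently, together with the requirement $\mu<1$ that prevents the right-hand-side error from being overwhelmed by the normalization factor $\rho^{(0)}_{\delta,\nu+2,\mu}(x_j)$, is the delicate bookkeeping of the proof.
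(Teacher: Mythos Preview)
Your overall architecture---local Schauder on rescaled covers, reduction to a weighted $C^0$ bound, then contradiction and blow-up to the four model spaces---matches the paper's, and your bookkeeping with the weights is correct. There are, however, two concrete problems.

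First, invoking Theorem~\ref{submainthm} is circular: that theorem concerns the K\"ahler--Einstein metrics $\omega_{\mathrm{KE},T}$, which are only produced in Section~\ref{perturbation} \emph{after} Theorem~\ref{schauderGlobal} has been established. What you need here is Proposition~\ref{rescaledGeo}, the pointed-limit classification for the \emph{approximate} metrics $(\mathcal{M},g_T)$. This is probably just a mis-citation on your part, but it matters.

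Second, and more substantively, your sorting of the four cases by $W_\infty$ is wrong. In the $D\times\mathbb{R}$ case (the paper's Case~3: $r_w(x_j)\not\to 0$, $w(x_j)$ bounded) one has $W(x_j,T_j)=1$ for large $j$, so $W_\infty=1$ and the limit equation is \emph{not} Laplace's equation but
\[
\Delta_D u_\infty + (1+w^2)\partial_w^2 u_\infty + 2w\,\partial_w u_\infty - u_\infty = 0
\]
on $D\times\mathbb{R}$. A harmonic Liouville theorem is useless here; you must separate variables and analyze the resulting hypergeometric ODE (parameters $\alpha+\beta=1$, $\gamma=1$), whose fundamental solutions both grow like $|w|^{(\sqrt{5}-1)/2}$. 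This is in fact where the threshold $\delta_0$ originates. Conversely, in the Calabi case (Case~4) the radial ODE after rescaling is an Euler equation with power-function solutions, and when the boundary is captured in the blow-up the argument is finished by the Hopf maximum principle together with the Neumann condition---no hypergeometric analysis is needed there. So you have placed the delicate Liouville step in the wrong model, and the case you dismissed as ``harmonic plus $(1+w)^{-\delta}$ decay'' is the one that actually drives the choice of $\delta_0$.
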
 
The operator $\mathcal{L}_T$ will arise in Section \ref{perturbation} as the linearization of a functional whose zeros are negative K\"ahler-Einstein metrics.

\subsection{Local rescaled geometries}
The key ingredient in the proof of Theorem \ref{schauderGlobal} is the following characterization of the rescaled geometry of the model spaces $(\mathcal{M}, g_T)$.
\begin{proposition}\label{rescaledGeo}
Let $(T_j)_{j=1}^{\infty}$ be a sequence of positive real numbers tending to infinity. If $(x_j)_{j=1}^{\infty}$ is a sequence of points with $x_j \in \mathcal{M}$, then there exists a subsequence (which we also call $j$) such that as $j \rightarrow \infty$, $(\mathcal{M},W(x_j,T_j)^{-2}g_{T_j},x_j)$ converges in the pointed Gromov-Hausdorff topology to one of the following spaces:
\begin{enumerate}
\item the Taub-NUT space $\mathbb{C}^2_{TN}$,
\item the Riemannian product $\mathbb{C} \times \mathbb{R}$,
\item the Riemannian cylinder $D \times \mathbb{R}$,
\item the Calabi model space $(\mathcal{C}_{\pm},g_{\mathcal{C}_{\pm}})$. 
\end{enumerate}
In case $1$, convergence is in the pointed $C^{2,\alpha}$ topology. In cases $2$ and $3$, convergence is in $C^{k,\alpha}$ on local universal covers away from $p$ for $k \geq 0$. In case $4$, convergence is in the pointed $C^{k,\alpha}$ topology for $k \geq 0$. 
\end{proposition}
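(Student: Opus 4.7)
My plan is to pass to a subsequence along which the relevant scale parameters at $x_j$ have limits, and then in each resulting regime read off the rescaled metric directly from the expansions of $h$, $\chi$, and $\Theta$ constructed in Sections \ref{solution} and \ref{m_construction}. Concretely, I extract a subsequence (still indexed by $j$) so that $z(x_j)\to z_\infty\in[-1,1/2]$ and, when $r_w(x_j)$ is defined, both $r_w(x_j)\to r_\infty\in[0,\infty]$ and $T_j r_w(x_j)\to R\in[0,\infty]$. Four asymptotic regimes arise, corresponding to the four cases of the proposition.

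Case 1, where $R<\infty$, is the Taub-NUT regime with $W(x_j,T_j)=T_j^{-1}$, and I rescale by $T_j^2$. By Section \ref{compactification}, $\omega_{T_j}$ agrees with the Taub-NUT data $(\Theta_0, g_{TN,T_j})$ up to a form that is $C^{2,\alpha}$-bounded with respect to the flat metric in the Hopf coordinates $(u_1,u_2)$, uniformly in $T_j$. The rescaling identity \ref{rescale_TN} shows that $T_j g_{TN,T_j}$ is isometric in $(T_j y,\, T_j w)$-coordinates to the standard Taub-NUT metric $g_{TN,1}$, while the $C^{2,\alpha}$ correction contributes an error that tends to zero as $T_j\to\infty$ on every ball of fixed rescaled radius. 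This yields pointed $C^{2,\alpha}$ convergence to $(\mathbb{C}^2_{TN},g_{TN})$.

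In Case 2 ($R=\infty$ with $W=r_w(x_j)$), the expansions \ref{deltah_expansion_singularpt}--\ref{chi_expansion} give $h\sim T_j^2$ and $\chi\to 1$ at $x_j$; the change of variables $\underline y=y/W$, $\underline w=w/W$ flattens the base to all of $\mathbb{R}^3\cong\mathbb{C}\times\mathbb{R}$, while the rescaled fiber length $W^{-1}h^{-1/2}\sim (T_j r_w(x_j))^{-1}$ tends to zero. In Case 3 ($W=1$, $z(x_j)\to 0$, with $r_w$ undefined or large), Proposition \ref{deltah_decay_prop} gives $h\sim 1/z^2$ and $\chi\to 1$; substituting $\tau=\log|z|$ converts $h\,dz^2$ into $d\tau^2$, supplying the $\mathbb{R}$-factor in $D\times\mathbb{R}$, and $h^{-1/2}\sim|z(x_j)|$ collapses the $S^1$ fiber. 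In both cases the pointed Gromov-Hausdorff limit is the stated three-dimensional space; for the $C^{k,\alpha}$-on-local-universal-covers statement I pass to the $\mathbb{Z}$-cover that unwraps the $S^1$ on $B_r(x_j)$, on which the lifted metric has bounded geometry, and elliptic bootstrap applied to Equations \ref{maineqn1}--\ref{maineqn3} together with the curvature relation $d\Theta=\Gamma$ from \ref{kahlerCondition} promotes $C^0$ convergence to smooth convergence. In Case 4 ($z_\infty\in[-1,0)\cup(0,1/2]$) no rescaling is required: the interpolation of Section \ref{hcorrection} was designed so that on compact subsets of each half-interval $(\chi,h)$ converges smoothly to the Calabi profile of Section \ref{calabimodel} and the connection $\Theta$ converges accordingly, giving pointed $C^{k,\alpha}$ convergence to $(\mathcal{C}_\pm,g_{\mathcal{C}_\pm})$ for every $k\geq 0$.

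The main obstacle is Case 1: the $T_j^2$ rescaling is violent, and I must verify both that the gauge correction $\theta$ introduced in Section \ref{compactification} has negligible effect after this dilation, and that the Hopf fibration identification of $\mathcal{M}$ near $p$ lines up globally with the Taub-NUT total space rather than merely with flat $\mathbb{R}^4$. The careful gauge-fixing carried out there --- specifically the identity $d\theta=\omega^{1,\alpha}$ with $\omega^{1,\alpha}$ in the optimal H\"older class of Equation \ref{c1_regularity}, combined with the explicit form of $\Theta_0$ from \ref{theta0_def} --- is precisely what makes this Taub-NUT identification survive the rescaling.
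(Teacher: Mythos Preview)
Your overall approach matches the paper's: you split into the same four regimes governed by the limits of $T_j r_w(x_j)$, $r_w(x_j)$, and $z(x_j)$, and in each regime you read off the rescaled metric from the expansions of $(h,\chi,\Theta)$ established in Sections \ref{solution} and \ref{m_construction}. Cases 1, 2, and 4 are handled essentially as in the paper.

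There is one genuine gap in your Case 3. You claim $h\sim 1/z^2$ and substitute $\tau=\log|z|$, but this is only accurate on the subregion where $|w(x_j)|=|T_jz(x_j)|\to\infty$. Your Case 3 also contains the subregime where $w(x_j)$ stays bounded (the paper isolates this as its own case): there $h_0=T_j^2/(1+w^2)$ is \emph{not} $\sim 1/z^2$, the correct coordinate on the $\mathbb{R}$ factor is $w=T_jz$ (equivalently $\tau=\operatorname{ArcSinh}(T_jz)$, which differs from $\log|z|$ by an unbounded additive constant), and the fiber size is $h^{-1/2}\sim T_j^{-1}$ rather than $|z(x_j)|$. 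The limit space is still $D\times\mathbb{R}$, so your conclusion is correct, but the argument you wrote does not cover this subcase; you should either split off the bounded-$w$ subregime or use $\operatorname{ArcSinh}(T_jz)$ uniformly (compare Equation \ref{arcsinh}).
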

 
\begin{proof}
Let  $W_j  = W(x_j,T_j)$ and define $r_w$ as in Section \ref{expansionsNearSing}. We will also write $r_w$ for the pullback of $r_w$ by $\pi: \mathcal{M} \rightarrow D \times [-1,1/2]$. In addition, let $\hat{g}_j = W_j^{-2}g_{T_j}$ denote the rescaled metric and $\hat{B}_R(x) = B_{W_jR}(x)$ denote the $R$-ball around a point $x \in \mathcal{M}$ with respect to the rescaled metric. 

We consider four possible behaviors of $(x_j)_{j=1}^\infty$ such that there must exist a subsequence falling into at least one of these categories. \\

\noindent \textbf{Case 1.} $Tr_w(x_j) \rightarrow C < \infty$. Using the coordinates defined in Section \ref{expansionsNearSing} and writing $\omega_{\mathbb{C}} = \frac{i}{2} dy \wedge d\bar{y}$, we have by Equation \ref{chi_expansion} and the discussion in Section \ref{compactification} that 
\begin{dmath}  \omega_T = \pi^{\ast} \chi \omega_D  +  dz \wedge \Theta 
= \pi^{\ast}\omega_D +  \bigg(\frac{1}{2Tr_w} + \frac{1}{T}(g^{2,\alpha} + w^1) \bigg)\pi^{\ast}\omega_{\mathbb{C}}  + \frac{1}{T} dw \wedge \Theta
= \pi^{\ast}\omega_D +  \bigg(\frac{1}{2Tr_w} + \frac{1}{T} (g^{2,\alpha} + w^1 )\bigg)\pi^{\ast}\omega_{\mathbb{C}}  + \frac{1}{T} dw \wedge (\Theta_0 + \theta)
\end{dmath} 
Meanwhile, since the projection $\pi: \mathcal{M} \rightarrow D \times [-1,1/2]$ is modelled on the Hopf fibration, we have by Equation \ref{taubNutCoords} that 
\begin{dmath} 
\frac{1}{T} \omega_{TN,T} = \bigg(\frac{1}{2Tr_w}+1\bigg)\pi^{\ast}\omega_{\mathbb{C}} + \frac{1}{T}  dw \wedge \Theta_0,
\end{dmath}
where $\omega_{TN,T}$ is the Taub-NUT metric in the complex-valued coordinates $u_1$, $u_2$ defined in Section \ref{compactification} and with parameter $T$. Thus
\[ \omega_T - \frac{1}{T}\omega_{TN,T} = \pi^{\ast}(\omega_D - \omega_{\mathbb{C}}) + \frac{1}{T} (g^{2,\alpha} + w^1) \pi^{\ast} \omega_{\mathbb{C}} + \frac{1}{T} dw \wedge \theta.\]

Now change coordinates such that $\underline{u}_i = \frac{1}{\sqrt{T}W} u_i$, $\underline{y} = \frac{1}{TW^2} y$, and $\underline{w} = \frac{1}{TW^2} w$. Then we let $\underline{s}^2 = \frac{1}{TW^2} s^2 = \vert \underline{u}_1 \vert^2+ \vert \underline{u}_2  \vert^2$.  By Equation \ref{rescale_TN}, we have
\[  \omega_{TN,T} = TW^2 \underline{\omega}_{TN,T^2 W^2}.\] 
In addition, noting that $\pi^{\ast} d\underline{y} = \underline{u}_1 d\underline{u}_2 + \underline{u}_2 d\underline{u}_1 = \mathcal{O}(\underline{s})$ with respect to the flat metric on $\mathbb{C}^2_{\underline{u}_1,\underline{u}_2}$, we have that 
\begin{dmath*} 
\pi^{\ast}(\omega_D - \omega_{\mathbb{C}}) = \mathcal{O}(r_w^2) \pi^{\ast}(dy \wedge d\bar{y}) 
= (TW^2)^{4} \mathcal{O}(\underline{s}^6). 
\end{dmath*}
Similarly 
\[  (g^{2,\alpha} + w^1) \pi^{\ast} \omega_{\mathbb{C}} = (TW^2)^2 \underline{g}^{2,\alpha}.\]
Therefore \[ \Vert W^{-2} \omega_T - \underline{\omega}_{TN,T^2W^2}  \Vert_{C^{2,\alpha}} =  \mathcal{O}(T^{-1})\]  in $\mathbb{C}^2_{\underline{u}_1,\underline{u}_2}$ on any region where $\underline{s}$ is bounded. Since $\underline{s}$ is bounded on a ball of $(W_j^{-2}g_{T_j})$-radius $R$ around $x_j$ for any $R > 0$, this gives pointed $C^{2,\alpha}$ convergence. \\

\noindent \textbf{Case 2.} $Tr_w(x_j) \rightarrow \infty$, $r_w(x_j) \rightarrow 0$.
Note that
\begin{equation}\label{g_formula}
 g_T =  \pi^{\ast} \tilde{g} + h_T \, dz^2 + h^{-1}_T \Theta^2,
 \end{equation}
is the Riemannian metric corresponding to $\omega_T$, where $\tilde{g}$ is the Riemannian metric corresponding to $\tilde{\omega}$. By Equation \ref{Wdef}, $W_j = r_w(x_j)$ for sufficiently large $j$, so we study the regularity of $\hat{g}_j = r_w(x_j)^{-2} g_{T_j}$.  

Integrating $h$ along $w$ shows that for any $q \in \mathcal{M}$,
\begin{equation}\label{arcsinh}
\vert\text{ArcSinh}(w(x_j))-\text{ArcSinh}(w(q)) \vert \leq  d_{g_{T_j}}(x_j,q) + \mathcal{O}(T^{-1/2}).
\end{equation}
Now take $q \in \hat{B}_{R}(x_j) = B_{r_w(x_j)R}(x_j)$. Since $w(x_j) \rightarrow 0$ and $d_{g_{T_j}}(x_j,q) \rightarrow 0$,  $w(q)$ can be assumed arbitrarily small. This lower-bounds the derivative of ArcSinh, allowing us to conclude that \[\vert w(q) - w(x_j) \vert \leq 2 r_w(x_j) R\] for large enough $j$. Thus if for each $j$ we make the change of coordinates $\underline{w} = r_w(x_j)^{-1} w$, $\underline{y} = r_w(x_j)^{-1} y$, and $\underline{u}_i = r_w(x_j)^{-1/2}u_i$, we have that $\underline{w}(x_j) \leq 1$ and $\vert \underline{w}(q) - \underline{w}(x_j) \vert \leq 2R$.

Now assume $R > 1$ and $q \in \hat{B}_R(x_j) \setminus \hat{B}_{\epsilon'}(p)$ for some small $\epsilon' > 0$. Thus $r_w(q) \geq C\epsilon' r_w(x_j)$.
By Equation \ref{deltah_expansion_singularpt},
\begin{align*}
h_T &= \frac{T^2}{r_w(x_j)^2 \underline{w}^2+1} + \frac{T}{2 r_w} + T(g^{2,\alpha}+w^1)
\end{align*} 
Thus
\[ r_w(x_j)^{-2} h_T \, dz^2 = \bigg(\frac{1}{r_w(x_j)^2 \underline{w}^2 + 1} + \mathcal{O}((Tr_w(x_j))^{-1})\bigg) d\underline{w}^2,\]
and so
\[ \vert \pi^{\ast}(r_w(x_j)^{-2} h_T \, dz^2 - d\underline{w}^2) \vert \rightarrow 0\]  
pointwise uniformly on the annulus with respect to $\mathbb{C}^2_{\underline{u}_1,\underline{u}_2}$. 

By analogous arguments, the growth of $h_T$ implies that \[ \vert r_w(x_j)^{-2} h_T^{-1} \Theta^2 \vert \rightarrow 0,\]
with convergence as described in the previous paragraph. To analyze the $\tilde{g}$ term, we observe that by Equation \ref{chi_expansion},
\[ r_w(x_j)^{-2} \pi^{\ast}\tilde{g} = \bigg(1+ \frac{1}{2Tr_w} + T^{-1} \mathcal{O}(1)\bigg) \frac{i}{2} d\underline{y} \wedge d\overline{\underline{y}} = \frac{i}{2} d\underline{y} \wedge d\overline{\underline{y}} +  \mathcal{O}((Tr_w)^{-1}). \] 
In summary,
\[ \hat{g}_T \rightarrow  \frac{i}{2} d\underline{y} \wedge d\underline{\overline{y}} + d\underline{w}^2 = \pi^{\ast}g_{\mathbb{C}_{\underline{y}}\times\mathbb{R}}\]
on $\hat{B}_R(x_j) \setminus \hat{B}_{\epsilon'}(x_j)$. A diagonal argument in $j$ and $\epsilon'$ therefore gives Gromov-Hausdorff convergence $\hat{B}_R(x_j) \rightarrow B_R(x_{\infty}) \subset \mathbb{R}^3$.

Now pass to a local universal cover $\tilde{\hat{B}}_\epsilon(q)$ for $q \in \hat{B}_R(x_j) \setminus \hat{B}_{\epsilon'}(p)$ and assume that $\hat{B_{\epsilon}}(q) \subset (\hat{B}_{\epsilon'}(p))^c$.  If we choose a coordinate such that $\Theta = dt$, then locally
\[\tilde{\hat{B}}_\epsilon(q) \simeq  U \times \mathbb{R}_t\] for some $U \subseteq \mathbb{D} \times [-1,1/2]$. Since $T^{-2}h_T \rightarrow 1$, we can rescale the $t$  coordinate to $\underline{t}$ so that on the local universal cover
\[ \hat{g}_T \rightarrow \pi^{\ast} g_{\mathbb{R}}^3 + d\underline{t}^2\]
in $C^{\infty}$, since all terms are smooth away from $p$. \\

\noindent \textbf{Case 3.} $r_w(x_j) \not \rightarrow 0$, $w(x_j) \not \rightarrow \infty$. Assume that the limit of $r_w$ is such that $W_j = 1$ and $\hat{g}_j = g_{T_j}$ for large enough $j$. Passing to a subsequence, we can assume that $w(x_j) \rightarrow w_{\infty} < \infty$ and $d(x_j,p) \rightarrow d_{\infty} < \infty$, since the diameter of $\tilde{\omega}(z)$ and the $S^1$ fiber are bounded. 

Now fix $R > 1$ and consider the domain $\hat{B}_R(x_j) \setminus \hat{B}_{\epsilon'}(p)$ for some small $\epsilon'$. As above, for any $q \in \hat{B}_R(x_j) \setminus \hat{B}_{\epsilon'}(p)$ we can pass to a local universal cover $\tilde{\hat{B_{\epsilon}}}(q) \simeq U \times \mathbb{R}_t$ with coordinates $y, \, \bar{y}, \, w$ on $U$ and $dt = \Theta$.  We analyze the metric on the universal cover as in the previous bullet, except there is no need to rescale by $W_j$.

Since we may assume $\hat{B_{\epsilon}}(q) \subset (\hat{B}_{\epsilon'}(p))^c$, $r_w$ is bounded below on $\tilde{\hat{B_{\epsilon}}}(q)$. Thus we have by Equation \ref{chi_expansion} and Proposition \ref{deltah_decay_prop} that 
\[  \pi^{\ast}\tilde{g} \rightarrow \pi^{\ast} g_D.\]
in $C^{2,\alpha}$ in these coordinates. Similarly, by the discussion in Section \ref{hcorrection},
\[  h_T  \rightarrow T^2\bigg(\frac{1}{w^2+1}+ \mathcal{O}(T^{-1})\bigg). \]
Thus by a rescaling of $t$, 
\[ \hat{g}_T \rightarrow \pi^{\ast} g_D + \frac{1}{w^2+1} dw^2 + (w^2+1) \, dt^2\] 
in $C^{\infty}$ on the local universal cover. Meanwhile, since $h_T^{-1} \rightarrow 0$, this gives Gromov-Hausdroff convergence to $D \times \mathbb{R}$. \\

\noindent \textbf{Case 4.} $w(x_j) \rightarrow \infty$. In this case we must have that $z(x_j) \rightarrow z_{\infty} \in [-1,1/2]$, and again we have $\hat{g}_j = g_{T_j}$. Fix $R > 0$ and take $q \in \hat{B}_R(x_j)$. Then
\begin{equation}\label{w_far}
d(q,x_j) \sim \vert \text{ArcSinh}(w(q)) -  \text{ArcSinh}(w(x_j)) \vert \sim \vert \log(w(q))-\log(w(x_j)) \vert,
\end{equation}
so 
\[ \inf \limits_{q \in \hat{B}_R(x_j)} r_w(q) \rightarrow \infty\]
as $j \rightarrow \infty$. 

First assume that $z_\infty = 0$ and define $\underline{w} = \frac{w(q)}{w(x_j)}$. Then by the discussion in Section \ref{hcorrection},
\begin{equation}\label{h_expression_6}
h(q)\, dz^2 = \bigg(\frac{k_{\pm}z(x_j)\underline{w}(q) + 1}{\frac{2}{3}k_{\pm}(z(x_j)\underline{w}(q))^3 + (z(x_j)\underline{w}(q))^2 + T^{-2}}\bigg)   \, z(x_j)^2 \,  d\underline{w}^2 \rightarrow \frac{1}{\underline{w}^2} \, d\underline{w}^2.
\end{equation}
By arguments similar to the previos section, the metric collapses to $D \times \mathbb{R}$, while
\[ \hat{g}_T \rightarrow \pi^{\ast} g_D + \frac{1}{\underline{w}^2} d\underline{w}^2 + \underline{w}^2 \, dt^2\]
in $C^{\infty}$ on the local universal cover. 

Now assume $z_\infty > 0$. Take $q \in \hat{B}_R(x_j)$ with coordinates $y, \, \bar{y}, \, z$ and $t$ as above.  As previously the discussion in Section \ref{hcorrection} gives that for $z < 0$,
\begin{equation}\label{h_expression_6}
h = \frac{k_- z+1}{\frac{2}{3}k_-  z^3 + z^2 + T^{-2}} + \mathcal{O}(T^{-5}z^{-4}).
\end{equation}
Meanwhile, Remark \ref{deltachi_D} gives that \[\delta \chi= \mathcal{O}(T^{-1}), \ \  \vert w \vert = C_2.\] Using Equations \ref{maineqn2} and \ref{deltah_decay_laplac} to integrate $\delta \chi$ outside of this region, we find that
\begin{equation}\label{chi_expression_6} \chi = 1 + k_{\pm} z + \mathcal{O}(T^{-1}).
\end{equation} 
Thus
\[ \hat{g}_T \rightarrow (1+k_- z)\pi^{\ast} g_D + \frac{k_{-}z + 1}{\frac{2}{3}k_-z^3+z^2} \, dz^2 + \frac{\frac{2}{3}k_-z^3+z^2}{k_{-}z + 1} \Theta^2,\]
in $C^{\infty}$, and this metric is recognized to be the Calabi model space metric described in Section \ref{calabimodel} for $n = k_-$. The discussion is similar for $z > 0$.

Note that the distance from $x_j$ to $\partial M$ is finite if $\vert z_{\infty} \vert > 0$, since
\begin{equation}\label{disttobound} \lim \limits_{j \rightarrow \infty} (\text{ArcSinh}(T_j)-\text{ArcSinh}(w(x_j))) = - \lim \limits_{j \rightarrow \infty} \log(T_j^{-1} w(x_j)).
\end{equation}
Thus in this case a sufficiently large definite ball around $x_{\infty}$ will contain a portion of the boundary. 

%

\end{proof} 
%

\subsection{Schauder estimates on $(\mathcal{M},\omega_T)$} The usefulness of Proposition \ref{rescaledGeo} will become apparent in the proofs of Propositions \ref{schauderLocal} and \ref{schauderGlobal}. In the first proof, we will see that the proposition gives a lower bound on the regularity scale of $(\mathcal{M},\omega_T)$, directly implying a local Schauder estimate. In the second proof, we will assume that a global Schauder estimate fails to hold and take a sequence of functions violating the estimate. After passing to a rescaled limit, the sequence will converge to a function whose behavior contradicts the properties of the spaces described in Proposition \ref{rescaledGeo}.

Proposition \ref{schauderLocal} is modeled on \cite{sz19} Proposition 4.37. 

\begin{proposition}\label{schauderLocal} (Local weighted Schauder estimate). For $T \gg 0$ and $(\mathcal{M}, g_T)$ as constructed in Section \ref{m_construction}, the following estimates hold:
\begin{enumerate}
\item (Interior estimate) For all $\alpha \in (0,1)$, there exists $C_{\alpha} > 0$ such that for all $x \in \mathcal{M}^{o}$, $r \in (0,\frac{1}{8}]$, and $u \in C^{2,\alpha}(B_{2rW(x)}(x))$,
\[ r^{2+\alpha} \Vert u \Vert_{C^{2,\alpha}_{\delta,\nu,\mu}(B_{rW(x)}(x))} \leq C_{\alpha} ( \Vert \mathcal{L}_T u \Vert_{C^{0,\alpha}_{\delta,\nu+2,\mu}(B_{2rW(x)}(x))} +  \Vert u \Vert_{C^{0}_{\delta,\nu,\mu}(B_{2rW(x)}(x))}).\] 
\item (Boundary estimate) For all $\alpha \in (0,1)$, there exists $C_{\alpha} > 0$ such that for all $x \in \partial \mathcal{M}$, $r \in (0,\frac{1}{8}]$, and $u \in C^{2,\alpha}(B_{2rW(x)}(x))$,
\begin{align*}
r^{2+\alpha} \Vert u \Vert_{C^{2,\alpha}_{\delta,\nu,\mu}(B_{rW(x)}(x))} \leq C_{\alpha} \bigg( &\Vert \mathcal{L}_T u  \Vert_{C^{0,\alpha}_{\delta,\nu+2,\mu}(B_{2rW(x)}(x))} \\& +  \Vert u \Vert_{C^{0}_{\delta,\nu,\mu}(B_{2rW(x)}(x))}  + \bigg\Vert \frac{\partial u}{\partial n} \bigg\Vert_{C^{1,\alpha}_{\delta,\nu+1,\mu}(B_{2rW(x)}(x))} \bigg).
\end{align*} 
\end{enumerate} 
\end{proposition}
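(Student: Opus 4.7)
The key input is Proposition \ref{rescaledGeo}: after rescaling $g_T$ by $W(x,T)^{-2}$ and passing to a local universal cover in Cases 2 and 3 if necessary (harmless since $u$ is a scalar and the Laplacian descends), the rescaled metric $\hat g$ is uniformly bounded in $C^{2,\alpha}$ on balls of fixed $\hat g$-radius, with constants independent of $T$ and of $x \in \mathcal M$; this uniformity is obtained from Proposition \ref{rescaledGeo} via the standard compactness/contradiction argument. Fix $x \in \mathcal M^{o}$ and $r \in (0,1/8]$. In the rescaled metric, $B_{rW(x)}(x) = \hat B_r(x)$, and $\mathcal L_T = W(x)^{-2}\hat L$ where $\hat L := \hat\Delta - W(x)^2$ is uniformly elliptic with uniformly $C^{0,\alpha}$-bounded coefficients (using that $W(x)$ is bounded).

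Apply the standard interior Schauder estimate on $\hat B_r(x) \subset \hat B_{2r}(x)$:
\[ \sum_{k=0}^{2} r^{k} \Vert \hat\nabla^k u \Vert_{\hat C^0(\hat B_r)} + r^{2+\alpha} [\hat\nabla^2 u]_{\hat C^{0,\alpha}(\hat B_r)} \leq C\bigl(r^{2+\alpha}\Vert \hat L u\Vert_{\hat C^{0,\alpha}(\hat B_{2r})} + \Vert u\Vert_{\hat C^0(\hat B_{2r})}\bigr). \]
The translation to weighted norms on $\mathcal M$ rests on two observations. First, $W(y) \asymp W(x)$ and $1+w(y) \asymp 1+w(x)$ for $y$ in a bounded $\hat g$-neighborhood of $x$, so the weight $\rho^{(k+\alpha)}_{\delta,\nu,\mu}(y)$ is uniformly comparable to $\rho^{(k+\alpha)}_{\delta,\nu,\mu}(x)$ on the relevant balls. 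Second, the conformal scaling under $\hat g = W(x)^{-2}g_T$ yields, for $0 \le k \le 2$,
\[ \rho^{(k)}_{\delta,\nu,\mu}(x)\,|\nabla^k u|_{g_T} = (1+w(x))^{-\delta} W(x)^{\nu} T^{\mu}\,|\hat\nabla^k u|_{\hat g}, \]
and analogously for the Hölder seminorm; crucially, the shift $\nu \mapsto \nu+2$ in $\rho^{(\alpha)}_{\delta,\nu+2,\mu}$ exactly absorbs the $W(x)^2$ that relates $\hat L$ to $\mathcal L_T$, so that $\rho^{(\alpha)}_{\delta,\nu+2,\mu}(x)[\mathcal L_T u]_{C^{0,\alpha}} = (1+w(x))^{-\delta}W(x)^{\nu} T^{\mu} [\hat L u]_{\hat C^{0,\alpha}}$. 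Multiplying the displayed rescaled estimate by the common factor $(1+w(x))^{-\delta}W(x)^{\nu}T^{\mu}$ and using $r \le 1$ to recombine the $C^0$, $C^1$, and $C^2$ terms on the left into the weighted $C^{2,\alpha}$ norm produces the interior estimate.

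The boundary estimate proceeds identically, with the Neumann boundary Schauder estimate on a half-ball replacing the interior estimate; this introduces an extra term $\Vert \partial u/\partial n\Vert_{\hat C^{1,\alpha}}$ on the right, and the weight shift $\nu \mapsto \nu + 1$ in $\rho^{(1+\alpha)}_{\delta,\nu+1,\mu}$ matches the single derivative absorbed by the normal direction under rescaling. Every boundary point lies in Case 4 of Proposition \ref{rescaledGeo}, where $\hat g$ converges in $C^{2,\alpha}$ to a Calabi model space with boundary intact, so the Neumann Schauder constant is uniform in $T$ and $x$. The main technical obstacle is upgrading the subsequential pointed $C^{2,\alpha}$ convergence of Proposition \ref{rescaledGeo} to a uniform $C^{2,\alpha}$ bound on $\hat g$, so that the Schauder constants can be taken independent of $T$ and $x$; this requires a compactness argument over the non-compact family of rescalings. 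Once that uniformity is in hand, the remainder of the proof is standard elliptic theory plus bookkeeping of weight exponents.
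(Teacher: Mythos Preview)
Your proposal is correct and follows essentially the same approach as the paper: use Proposition \ref{rescaledGeo} to obtain a uniform lower bound on the $C^{2,\alpha}$-regularity scale (in the sense of local universal covers) proportional to $W(x,T)$, apply the standard Schauder estimate in the rescaled metric $\hat g$ for the operator $\Delta_{\hat g}-W(x)^2$, and then translate back to weighted norms using that $\rho^{(k+\alpha)}_{\delta,\nu,\mu}$ is uniformly comparable to a constant on balls of fixed $\hat g$-radius. Your explicit bookkeeping of how the weight shift $\nu\mapsto\nu+2$ absorbs the factor $W(x)^2$ relating $\mathcal L_T$ to $\hat L$ is in fact more detailed than what the paper writes out, and your identification of the compactness step needed to pass from subsequential convergence to uniform Schauder constants is exactly the point the paper uses implicitly via Equation \eqref{regscale_lb}.
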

\begin{proof}
Fix some small $\epsilon' > 0$. We observe that Proposition \ref{rescaledGeo} demonstrates that the $C^{2,\alpha},\epsilon'$-regularity scale $r_{2,\alpha}(x)$ of $(\mathcal{M},g_T)$ with respect to $\epsilon'$ at $x$ is bounded below by a multiple of $W(x,T_j)$:
\begin{equation}\label{regscale_lb}
r_{2,\alpha}(x) \geq CW(x,T_j).
\end{equation} 
For arguing by contradiction, this is equivalent to saying that for any sequence $(x_j,T_j)_{j=1}^\infty$, the $C^{2,\alpha}$-regularity scale of $(\mathcal{M},W(x_j,T_j)^{-2}g_T)$ is bounded below by a constant, not necessarily uniform across sequences, at $x_j$. This follows because $(\mathcal{M},W(x_j,T_j)^{-2}g_T,x_j)$ converges to one of the four spaces in the proposition in $C^{2,\alpha}$ on the local universal cover.

Equation \ref{regscale_lb} allows us to apply the local Schauder estimates on Euclidean balls with respect to the metric $\hat{g}_T$. On any such ball $B=\hat{B}_r(x)$ with $2B = \hat{B}_{2r}(x)$,
\[ \Vert u \Vert_{C^{2,\alpha}(B)} \leq C(\Vert (\Delta_{\hat{g}}-W^2) u \Vert_{C^{0,\alpha}(2B)} + \Vert  u \Vert_{C^{0}(2B)})\]
for $C$ uniform in $T$ because $W(x)$ is uniformly bounded above by $1$. Now we will demonstrate in the proof of Theorem \ref{schauderGlobal} that for any fixed $r$, $\rho^{(0)}_{\delta,\nu,\mu}$ is equivalent to a constant on $B_{rW(x)}(x) = \hat{B}_r(x)$ uniformly in $x \in \mathcal{M}$ and $T \gg 0$. Therefore, multiplying through by $\rho^{(0)}_{\delta,\nu,\mu}(x)$ and rescaling $\hat{g}_T \rightarrow g_T$ yields that 
\begin{equation}\label{almostschauder} \Vert u \Vert_{C^{2,\alpha}_{\delta,\nu,\mu}(B)} \leq C( \Vert \mathcal{L}_T u \Vert_{C^{0,\alpha}_{\delta,\nu+2,\mu}(2B)} + \Vert  u \Vert_{C^{0}_{\delta,\nu,\mu}(2B)}).
\end{equation}
The proof of the boundary estimate is similar.

\end{proof}
A global version of Proposition \ref{schauderLocal} follows from the local version and a covering argument.  
\begin{proposition}\label{schauderGlobalC0} Let $\nu$, $\alpha$, and $\delta$ be parameters satisfying the conditions described in Theorem \ref{schauderGlobal}. Then there exists $C > 0$ such that for all sufficiently large $T$ and $u \in C^{2,\alpha}_{\delta,\nu,\mu}(\mathcal{M})$, 
\[ \Vert u \Vert_{C^{2,\alpha}_{\delta,\nu,\mu}(\mathcal{M})} \leq C \bigg( \Vert \mathcal{L}_T u \Vert_{C^{0,\alpha}_{\delta,\nu+2,\mu}(\mathcal{M})} + \bigg\Vert \frac{\partial u}{\partial n} \bigg\Vert_{C^{1,\alpha}_{\delta,\nu+1,\mu}(\partial \mathcal{M})} + \Vert u \Vert_{C^{0}_{\delta,\nu,\mu}(\mathcal{M})}\bigg).\] 
\end{proposition}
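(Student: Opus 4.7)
The plan is to derive the global estimate by applying the local Schauder estimate of Proposition \ref{schauderLocal} at every point $x \in \mathcal{M}$ at a fixed scale (say $r = 1/8$) and passing to suprema. Since $x$ lies in the small ball $B = B_{rW(x)}(x)$, each of the pointwise quantities contributing to $\Vert u \Vert_{C^{2,\alpha}_{\delta,\nu,\mu}(\mathcal{M})}$ at $x$ is controlled by $\Vert u \Vert_{C^{2,\alpha}_{\delta,\nu,\mu}(B)}$, which by Proposition \ref{schauderLocal} is bounded by $\Vert \mathcal{L}_T u\Vert_{C^{0,\alpha}_{\delta,\nu+2,\mu}(2B)} + \Vert u \Vert_{C^0_{\delta,\nu,\mu}(2B)}$ (plus a normal-derivative term in the boundary case). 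Each of these quantities is majorized by its global counterpart on $\mathcal{M}$, and since the global norms are defined as suprema rather than sums, no covering of bounded multiplicity is required.

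The case split between the interior and boundary versions of the local estimate is routine. If $d_{g_T}(x,\partial \mathcal{M}) > W(x)/4$, then $2B \subset \mathcal{M}^o$ and I would apply the interior estimate. Otherwise, I would fix a boundary point $x' \in \partial \mathcal{M}$ with $d(x,x') \leq W(x)/4$ and apply the boundary estimate at $x'$ on a ball of radius comparable to $W(x')$ that still contains $x$; this is where the $\Vert \partial u/\partial n\Vert_{C^{1,\alpha}_{\delta,\nu+1,\mu}(\partial \mathcal{M})}$ term enters. Taking the supremum over $x$ then yields the desired inequality, provided all the multiplicative constants picked up along the way are uniform in $T$ and $x$.

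The principal technical step, and the main obstacle, is to check that the weight functions $\rho^{(j+\alpha)}_{\delta,\nu,\mu}$ are uniformly comparable on any ball $B_{2rW(x)}(x)$, so that evaluating the local norm on $B$ (or at a nearby boundary point $x'$) versus at the single point $x$ costs only a fixed multiplicative factor independent of $T$ and $x$. This reduces to showing that for $y \in B_{2rW(x)}(x)$ the ratios $W(y)/W(x)$ and $(1+w(y))/(1+w(x))$ lie in a fixed compact subinterval of $(0,\infty)$. The first ratio is controlled by the descriptions of $W$ in each of the model geometries of Proposition \ref{rescaledGeo}, since on a ball of $g_T$-radius $W(x)$ the rescaled metric $W(x)^{-2}g_T$ has unit-scale geometry to leading order. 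The second ratio is controlled by the ArcSinh-type bound of Equation \ref{arcsinh}, which limits the variation of $w$ on such balls; for $\delta \in (0,\delta_0)$ small enough the resulting variation of $(1+w)^{-\delta}$ is bounded by a constant independent of $T$, and an analogous argument gives the comparability of $\rho$ at $x$ and at the nearest boundary point $x'$. The restriction on $\mu$ plays no role in this comparability argument — it only ensures that the weighted $C^{0}$ norm of $u$ appearing in the intermediate inequality is finite — so absorbing all these constants into a single $C$ yields the stated bound.
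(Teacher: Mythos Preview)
Your proposal is correct and matches the paper's approach: the paper gives no detailed proof, stating only that Proposition~\ref{schauderGlobalC0} ``follows from the local version and a covering argument,'' and your plan fills in precisely those details. Your observation that no bounded-multiplicity covering is actually needed (since the global norms are suprema) is a correct sharpening of the paper's terse description, and the weight-comparability step you identify as the main technical point is exactly what the paper defers to the proof of Theorem~\ref{schauderGlobal}.
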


We can now give the proof of Theorem \ref{schauderGlobal}.

\begin{proof} (Theorem \ref{schauderGlobal}) If there is no such $C$, then there must be a sequence of $T_j$ and $C^{2,\alpha}$ functions $u_j$ with $\frac{\partial u_j}{\partial n}\vert_{\partial \mathcal{M}} = 0$ such that 
\[ [  u_j ]_{C^{2,\alpha}_{\delta,\nu,\mu}(\mathcal{M})} + [  u_j ]_{C^{2}_{\delta,\nu,\mu}(\mathcal{M})} +  \Vert u_j \Vert_{C^{0,\alpha}_{\delta,\nu+2,\mu}(\mathcal{M})} = 1 \]
while 
\[\Vert \mathcal{L}_{T_j} u_j \Vert_{C^{0,\alpha}_{\delta,\nu+2,\mu}(\mathcal{M})} \rightarrow 0.\] 
Then there must exist a sequence of points $(x_j)$ such that
\begin{equation}\label{rhs_big} [  u_j ]_{C^{2,\alpha}_{\delta,\nu,\mu}(x_j)} + [  u_j ]_{C^{2}_{\delta,\nu,\mu}(x_j)} + [  u_j ]_{C^{0,\alpha}_{\delta,\nu+2,\mu}(x_j)} +  [  u_j ]_{C^{0}_{\delta,\nu+2,\mu}(x_j)} > \frac{1}{8}.
\end{equation}

We will prove a contradiction by passing to the rescaled pointed Gromov-Hausdorff limit for the sequence $(x_j)_{j=1}^\infty$ and the rescaling factor $W(x_j,T)$. To preserve norms after rescaling the metric, we rescale both the functions $(u_j)$ and the weight functions $\rho^{(k+\alpha)}_{\delta,\nu,\mu}$. Recall that if $\hat{g} = \lambda^2 g$ for any metric $g$ and $\lambda > 0$, then formally,
\[ \Vert \lambda^{k+\alpha} u \Vert_{C^{k,\alpha}_{\hat{g}}} =  \Vert u \Vert_{C^{k,\alpha}_g} \]
on functions, and a similar equality holds for higher degree forms. In our case, since we are considering weighted H\"older norms, we have a choice of how to divide the scaling factor between the weight function $\rho^{(k+\alpha)}_{\delta,\nu,\mu}$ and the target function $u_j$, but we must ensure that the rescaled weight functions converge on the appropriate rescaled spaces.  Thus let
\begin{equation}\label{rho_rescaled}
\hat{\rho}^{k+\alpha}_{\delta,\nu,\mu}(q) = W(x_j)^{-(\nu + k+\alpha)} T_j^{-\mu} \rho^{k+\alpha}_{\delta,\nu,\mu}(q) = (1+w(q))^{-\delta}\bigg(\frac{W(q)}{W(x_j)}\bigg)^{\nu+k+\alpha}
\end{equation}
and
\begin{equation}\label{u_rescaled}
\hat{u}_j(q) =   W(x_j)^{\nu} T_j^{\mu} u_j(q).
\end{equation}
In what follows, whenever we take a H\"older norm of $\hat{u}_j$ or a related function, we take the weight to be $\hat{\rho}^{k+\alpha}_{\delta,\nu,\mu}$ and the metric to be $\hat{g}_{j} = W(x_j,T_j)^{-2} g_{T_j}$. Thus 
\begin{equation}\label{rhsu} [ \mathcal{L}_{T_j} \hat{u}_j ]_{C^{0,\alpha}_{\delta,\nu+2,\mu}(\mathcal{M})} \rightarrow 0
\end{equation}
and 
\begin{equation}\label{lhsu}
 [ \hat{u}_j ]_{C^{2,\alpha}_{\delta,\nu,\mu}(\mathcal{M})} + [  \hat{u}_j ]_{C^{2}_{\delta,\nu,\mu}(\mathcal{M})}  + \Vert \hat{u}_j \Vert_{C^{0,\alpha}_{\delta,\nu+2,\mu}(\mathcal{M})}  = 1 
\end{equation}
while
\begin{equation}\label{1_lhs_rescaled}
 \left[ \hat{u}_j \right]_{C^{2,\alpha}_{\delta,\nu,\mu}(x_j)} +  \left[  \hat{u}_j \right]_{C^{2}_{\delta,\nu,\mu}(x_j)} + \left[ \hat{u}_j \right]_{C^{0,\alpha}_{\delta,\nu+2,\mu}(x_j)} +  \left[  \hat{u}_j \right]_{C^{0}_{\delta,\nu+2,\mu}(x_j)}  > \frac{1}{8}.
\end{equation}
As in the proof of Proposition \ref{rescaledGeo}, there must exist a subsequence of $(x_j)_{j=1}^\infty$ satisfying one of four behaviors. \\

\noindent \textbf{Case 1.}  $Tr_w(x_j) \rightarrow C < \infty$. By Proposition \ref{rescaledGeo}, we can view $(\hat{u}_j)_{j=1}^{\infty}$ as a sequence of functions on a neighborhood of a point $x_{\infty}$ in $(\mathbb{C}^2,g_{TN,a})$ for some $a > 1$.
Now by Equation \ref{rho_rescaled},
 \[\hat{\rho}^{(k+\alpha)}_{\delta,\nu,\mu}(q) =  (1 + \mathcal{O}(T^{-1}))\bigg(\frac{W(q)}{W(x_j)}\bigg)^{\nu+k+\alpha}.\]
But in the Taub-NUT region, $\frac{r_w(q)}{W(x_j)}$ converges to $d_{g_{TN,a}}^2(q,0)$ (see Equation \ref{s_vs_r}), so $\hat{\rho}^{(k+\alpha)}_{\delta,\nu,\mu}(q)$ is uniformly equivalent to a constant on $\hat{B}_R(x_j)$ for any fixed $R$ and large enough $j$. Thus by Equation \ref{lhsu}, the unweighted local Schauder estimates discussed in the proof of Proposition \ref{schauderLocal},  and the $C^{2,\alpha}$ convergence of the metrics,
\begin{equation}\label{case1_c2a}
\Vert \hat{u}_j\Vert_{C^{2,\alpha}(B_{R}(x_\infty))}  \leq C(R),
\end{equation}
that is, $\hat{u}_j$ is bounded in the unweighted $C^{2,\alpha}$ toplogy on $B_{R}(x_\infty) \subset (\mathbb{C}^2,g_{TN,a})$ (defined e.g. using Definition \ref{spaces_def} with weight function $\rho^{(k+\alpha)}_{\delta,\nu,\mu} \equiv 1$) with bound depending on $R$, while
\begin{equation}\label{vjdecay}
\hat{u}_j(q) \leq C d_{\hat{g}_{j}}(q,p)^{-2(\nu+2)}. 
\end{equation}
for $C$ independent of $R$. Also, viewing $x_j$ as a point in $B_{R}(x_\infty)$ we have by Equation \ref{1_lhs_rescaled} that 
\begin{equation}\label{vjatxj} 
 \left[  \hat{u}_j \right]_{C^{2,\alpha}(x_j)} +  \left[  \hat{u}_j \right]_{C^{2}(x_j)} +   \left[  \hat{u}_j \right]_{C^{0,\alpha}(x_j)} +  \left[  \hat{u}_j \right]_{C^{0}(x_j)} > \frac{1}{C}
\end{equation} 
for $C$ again independent of $R$. Equation \ref{case1_c2a} gives that for any $\beta < \alpha$ a subsequence converges in $C^{2,\beta}(B_R(x_{\infty}))$ to a function $\hat{u}_{\infty}$, and by Equation \ref{vjdecay}, 
\begin{equation}\label{case1_u_decay}
\hat{u}_{\infty}(q) \leq Cd_{g_{TN,a}}(q,0)^{-2(\nu+2)}
\end{equation}
for $C$ independent of $R$. 
Because we are rescaling the metric, $\mathcal{L}_T$ does not converge to $\Delta_{g_{TN}}-1$. Rather,
\begin{align*}
\Vert \mathcal{L}_T \hat{u}_j \Vert_{C^{0,\alpha}_{\delta,\nu+2,\mu}(\hat{B}_{R}(x_j))} &\geq \Vert W(x_j)^{-2} \Delta_{\hat{g}} \hat{u}_j \Vert_{C^{0,\alpha}_{\delta,\nu+2,\mu}(\hat{B}_{R}(x_j))} - \Vert \hat{u}_j \Vert_{C^{0,\alpha}_{\delta,\nu+2,\mu}(\hat{B}_{R}(x_j))} \\
&\geq CT^2 \Vert  \Delta_{\hat{g}} \hat{u}_j \Vert_{C^{0,\alpha}_{\delta,\nu+2,\mu}(\hat{B}_{R}(x_j))} + \mathcal{O}(1).
\end{align*}
Thus
\[ \Vert \Delta_{\hat{g}} \hat{u}_j \Vert_{C^{0,\alpha}(\hat{B}_{R}(x_\infty))} = \mathcal{O}(T^{-2}).\]
By $C^{2,\beta}$ convergence, this implies that $\hat{u}_{\infty}$ is a $C^2$ harmonic function on $B_R(x_{\infty})$.   Also, standard Schauder estimates on $\hat{B}_{R}(x_\infty)$ now imply that $\hat{u}_j$ converges in $C^{2,\alpha}$. 

Since $R$ was arbitrary, choosing a sequence of $R \rightarrow \infty$ gives a harmonic function on $(\mathbb{C}^2,g_{TN})$. Since $\nu + 2 > 0$, Equation \ref{case1_u_decay} implies that $\hat{u}_{\infty}$ decays at infinity, so by the Cheng-Yau gradient estimate, $\hat{u}_\infty$ must vanish (\cite{Li} Proposition 6.6). But this contradicts Equation \ref{vjatxj} by $C^{2,\alpha}$ convergence. \\


\noindent \textbf{Case 2.}  $Tr_w(x_j) \rightarrow \infty$, $r_w(x_j) \rightarrow 0$.  By similar considerations as in the previous case, we have that $\hat{\rho}^{(k+\alpha)}_{\delta,\nu,\mu}(q)$ is uniformly constant on $\hat{B}_R(x_j) \setminus \hat{B}_{R^{-1}}(p)$. Now by Proposition \ref{rescaledGeo}, $(\mathcal{M},\hat{g}_{j},x_j)$ converges to $(\mathbb{R}^3,g_{\mathbb{R}^3},x_{\infty})$ in the pointed Gromov-Hausdorff topology. It is not immediate that $\hat{u}_j$ induces a function on $\mathbb{R}^3$. Instead, we observe that for each $j$,
\begin{equation}\label{case2_c2a}
\Vert \hat{u}_j \Vert_{C^{2,\alpha}(\hat{B}_{R}(x_j) \setminus \hat{B}_{R^{-1}}(p))} \leq C(R),
\end{equation}
\begin{equation}\label{case2_pointwise}
 \left[  \hat{u}_j \right]_{C^{2,\alpha}(x_j)} +  \left[  \hat{u}_j \right]_{C^{2}(x_j)} +   \left[  \hat{u}_j \right]_{C^{0,\alpha}(x_j)} +  \left[  \hat{u}_j \right]_{C^{0}(x_j)} > \frac{1}{C},
\end{equation}
\begin{equation}\label{case2_decay}
\vert \hat{u}_j(q) \vert \leq C\bigg(\frac{r_w(x_j)}{r_w(q)}\bigg)^{\nu+2},
\end{equation}
and 
\begin{equation}\label{case2_laplace}
\Vert \Delta_{\hat{g}} \hat{u}_j \Vert_{C^{0,\alpha}(\hat{B}_{R}(x_j))} \rightarrow 0.
\end{equation}

Take $q \in B_R(x_\infty) \setminus \lbrace 0 \rbrace$ and assume that $q_j \in (\mathcal{M},g_{T_j}) \rightarrow q$. Proposition \ref{rescaledGeo} implies that there is $\epsilon > 0$ such that $(\tilde{\hat{B_\epsilon}}(q_j),\hat{g}_{j}) \rightarrow (B_R(q) \times \mathbb{R},g_{\mathbb{R}^4})$ in $C^{2,\alpha}$. Pulling back to these covers, Equation \ref{case2_c2a} gives that $\hat{u}_j$ defines a sequence of functions converging to some $\hat{u}_{\infty}$ in $C^{2,\beta}(B_R(q) \times \mathbb{R})$ for any $\beta < \alpha$.  In addition, Equation \ref{case2_laplace} implies that $\hat{u}_{\infty}$ is $\Delta_{\mathbb{R}^4}$-harmonic.

Now since the $S^1$ fiber diameter vanishes as $j \rightarrow \infty$, Equation \ref{case2_c2a} implies that $\hat{u}_{\infty}$ is constant in the $t$ direction. Therefore $\hat{u}_{\infty}$ descends to a harmonic function on $\hat{B}_\epsilon(q)$. Repeating this near each point in $B_R(x_\infty)$ except the origin and then letting $R \rightarrow \infty$, we can take $\hat{u}_{\infty}$ to be defined on $\mathbb{R}^3 \setminus \lbrace 0 \rbrace$.  Finally, Equation \ref{case2_decay} gives that \[ \vert \hat{u}_{\infty}(q) \vert \leq Cd_{\mathbb{R}^3}(q,0)^{-2(\nu+2)}.\]  Now $\nu + 2 \in (0,1/2)$, so a harmonic function on $\mathbb{R}^3 \setminus \lbrace 0 \rbrace$ which is bounded by $d_{\mathbb{R}^3}(q,0)^{-2(\nu+2)}$ must vanish (\cite{sz19} Lemma 5.7). As in the previous case, this implies that $\hat{u}_{\infty} = 0$, contradicting Equation \ref{case2_pointwise}.\\

\noindent \textbf{Case 3.} $r_w(x_j) \not \rightarrow 0$, $w(x_j) \not \rightarrow \infty$.  We can assume that $W(x_j) = 1$ for large $j$. Since in this case $d_{g_{T_j}}(x_j,p) \rightarrow d_{\infty} < \infty$ and $\hat{g}_{j} = g_{T_j}$, we have that  $\hat{\rho}^{(\alpha)}_{\delta,\nu+2,\mu}(q)$ is uniformly constant for $q \in \hat{B}_{R}(x_j) \setminus \hat{B}_{R^{-1}}(p)$. 
By similar arguments as in the previous case, we have that
\begin{equation}\label{case3_c2a}
\Vert \hat{u}_j \Vert_{C^{2,\alpha}(\hat{B}_{R}(x_j) \setminus \hat{B}_{R^{-1}}(p))} \leq C(R),
\end{equation}
 while
\begin{equation}\label{case3_pointwise}
[  \hat{u}_j ]_{C^{2,\alpha}(x_j)} + [  \hat{u}_j ]_{C^{2}(x_j)} + [ \hat{u}_j ]_{C^{0,\alpha}(x_j)} +  [  \hat{u}_j]_{C^{0}(x_j)} > \frac{1}{C}
\end{equation}
and
\begin{equation}\label{case3_decay}
\vert \hat{u}_j(q) \vert \leq C(1+w(q))^{\delta}r_w(q)^{-(\nu+2)},
\end{equation}
for $C$ independent of $R$, and 
\begin{equation}\label{case3_laplace}
\Vert \mathcal{L}_T \hat{u}_j \Vert_{C^{0,\alpha}(\hat{B}_{R}(x_j) \setminus \hat{B}_{R^{-1}}(p))} \rightarrow 0.
\end{equation}
As in the previous bullet, $\hat{u}_j$ converges to an $S^1$ invariant function on each local universal cover, and these functions stitch to a function $\hat{u}_{\infty}$ on the punctured cylinder $D \times \mathbb{R} \setminus \lbrace p \rbrace$ such that $\vert \hat{u}_{\infty}(q) \vert \leq C(1+w(q))^{\delta}$ for $q$ away from $p$. Further, Equation \ref{case3_decay} and Equation \ref{case3_laplace} give that $\hat{u}_{\infty}$ is a weak solution to $\mathcal{L}_T u = 0$ on $D \times \mathbb{R}$.  It is computed that 
\[ \Delta_{\hat{g}_{j}} = \pi^{\ast} \Delta_{g_D} +(1+w^2)\,  \partial_w^2  + 2w \, \partial_w + \frac{1}{1+w^2} \partial_{t}^2   + \mathcal{O}(T^{-1})\]
in coordinates on each local universal cover (see the proof of Proposition \ref{rescaledGeo}), so $u_{\infty}$  
satisfies
\[ \pi^{\ast} \Delta_{D} \hat{u}_\infty + (1+w^2) \, \partial_w^2 \hat{u}_{\infty}  + 2w \, \partial_w \hat{u}_{\infty} - \hat{u}_{\infty} = 0 \]
on $D \times \mathbb{R}$.

As above, we characterize this equation by separation of variables. Let $x = \frac{1}{2}(1+iw)$. The resulting ODE is written
\[ x(x-1) f_{xx} + (2x-1) f_x - (1+\lambda^2)f = 0.\] 
This is a hypergeometric equation with 
\[ \alpha(\lambda) = \frac{1+\sqrt{5+4\lambda^2}}{2} \ \ \ \ \ \ \ \beta(\lambda) = \frac{1-\sqrt{5+4\lambda^2}}{2} \ \ \ \ \ \ \ \gamma = 1\]
in the notation of Section \ref{solution}. As previously, the parameters satisfy the relation
\[ \frac{\alpha(\lambda) + \beta(\lambda) + 1}{2} = \gamma.\] 
Therefore the arguments from Section \ref{solution} give that the two fundamental solutions $F(\alpha,\beta,\gamma;x(z))$ and $F(\alpha,\beta,\gamma;1-x(z))$, each of which grows like $\vert w \vert^{-\beta}$ for large $\vert w \vert$. (Recall that we needed the inhomogeneous delta function term to achieve globally decaying solutions in Section \ref{solution}.) Since $-\beta > \frac{\sqrt{5}-1}{2}$, choosing $\delta  < \delta_0 < \frac{\sqrt{5}-1}{2}$ gives that $u_{\infty} = 0$, a contradiction. \-\ \\

\noindent \textbf{Case 4.}  $w(x_j) \rightarrow \infty$. Recall that $d_{g_{T_j}}(x_j,\delta M) \rightarrow \infty$ if and only if $z_{\infty} = 0$. First assume that this is the case. Now $d_{g_{T_j}}(x_j,p) \rightarrow \infty$, so $\hat{B}_R(x_j)$ converges to a ball in the Calabi model space in the rescaled coordinate $\underline{w}(q) = \frac{w(q)}{w(x_j)}$.  We have by Equation \ref{w_far} that \[ Ce^{-R} \leq \vert \underline{w}(q) \vert \leq Ce^R.\]

Now \[\hat{\rho}^{(k+\alpha)}_{\delta,\nu,\mu}(q) = (1+w(q))^{-\delta}\] on $\hat{B}_R(x_j)$ for any $R > 0$ and large enough $j$. Thus if we define $\hat{v}_j = (1+w(x_j))^{-\delta} \hat{u}_j$, then we have that
\begin{equation}\label{case3_c2a}
\Vert \hat{v}_j \Vert_{C^{2,\alpha}(\hat{B}_{R}(x_j))} \leq C(R),
\end{equation}
\begin{equation}\label{case4_xj}
 [\hat{v}_j ]_{C^{2,\alpha}(x_j)} + [  \hat{v}_j ]_{C^{2}(x_j)} + [ \hat{v}_j ]_{C^{0,\alpha}(x_j)} +  [  \hat{v}_j]_{C^{0}(x_j)} > \frac{1}{C}
\end{equation}
\begin{equation}\label{case4_growth}
\vert \hat{v}_j(q) \vert \leq C\frac{(1+w(q))^{\delta}}{(1+w(x_j))^{\delta}} \leq C\underline{w}^{\delta}(q),
\end{equation}
and
\begin{equation}\label{case4_lt}
\vert \mathcal{L}_{T_j}\hat{v}_j(y) \vert \rightarrow 0.
\end{equation}
As above, we get an $S^1$-invariant limit function $\hat{v}_{\infty}$.  Equation \ref{case4_growth} implies that $\hat{v}_{\infty} = \mathcal{O}(\underline{w}^{\delta})$. Using the expression we have derived for the metric in these coordinates and arguing as in the previous case, we have that 
\begin{equation}\label{case4_pde} \pi^{\ast}\Delta_D\hat{v}_{\infty} + \underline{w}^2 \partial_{\underline{w}}^2\hat{v}_{\infty} + 2\underline{w}\, \partial_{\underline{w}}\hat{v}_{\infty} -\hat{v}_{\infty}  = 0.
\end{equation}
Using separation of variables, the resulting ODE is 
\[ x^2 f_{xx} + 2x f_x -(1+\lambda^2) f = 0,\]
whose solutions are given by power functions in $\underline{w}$ with exponents contained in a discrete set. The decaying solutions blow up near $\underline{w}(q) = 0$, so a global solution to the ODE must be growing like some definite power of $\underline{w}$. This contradicts Equation \ref{case4_growth} for small enough $\delta$. 

Now allow $ z_{\infty} > 0$, so for large enough $R$, $\hat{B}_R(x_j)$ contains a portion of the boundary. We have $\hat{v}_{\infty}$ as previously, but we cannot consider the behavior as $\underline{w}(q) \rightarrow \infty$ because $\underline{w}(q)$ is bounded. Instead, we argue that by Equation \ref{case4_pde}, the Hopf maximum principle, and the Neumann boundary condition on $u_j$, $\hat{v}_{\infty}$ cannot achieve its maximum on $\delta \mathcal{M}$. On the other hand, by Equation \ref{case4_growth}, \[ \hat{v}_{\infty}(q) \leq  C\left(\frac{z(q)}{z_\infty}\right)^{\delta},\] so since $\delta > 0$, $\hat{v}_{\infty}(q)$ decays to zero away from the boundary of $\mathcal{C}$. Thus $\hat{v}_{\infty} \equiv 0$, contradicting $C^{2,\alpha}$ convergence and Equation \ref{case4_xj}.

\end{proof}

\section{Perturbation to an Exact Solution}\label{perturbation}

\subsection{Implicit function theorem} Our last step is to apply the implicit function theorem in the following form.

\begin{theorem}\label{implicitfxnthm} Let $\mathcal{F}: S_1 \rightarrow S_2$ be a map between Banach spaces such that for all $v \in S_1$, 
\[ \mathcal{F}(v) - \mathcal{F}(0) = \mathcal{L}(v) + \mathcal{N}(v) \]
for operators $\mathcal{L}$ and $\mathcal{N}$ with the following properties:
\begin{enumerate}
\item (Bounded inverse) $\mathcal{L}$ is a linear isomorphism and there exists $C_L > 0$ such that
\[ \Vert \mathcal{L}^{-1} \Vert_{\text{op}} \leq C_L.\]
\item (Controlled nonlinear error) We have that $\mathcal{N}(0) = 0$, and there exists $C_N > 0$ and $r_0 \in (0, \frac{1}{2 C_N C_L})$ such that for all $v_1, v_2 \in \overline{B_{r_0}(0)} \subset S_1$, 
\[ \Vert \mathcal{N}(v_1) - \mathcal{N}(v_2) \Vert_{S_2} \leq C_N r_0 \Vert v_1 - v_2 \Vert_{S_1}.\]
\item (Controlled initial error) The radius $r_0$ can be chosen such that \[\Vert \mathcal{F}(0) \Vert_{S_2} \leq \frac{r_0}{4C_L}.\]  
\end{enumerate} 
Then there exists a unique $x \in B_{r_0}(0)$ such that \[ \mathcal{F}(x) = 0.\] Further,
\begin{equation}\label{solutionbound}
\Vert x \Vert_{S_1} \leq 2C_L \Vert \mathcal{F}(0) \Vert_{S_2}.
\end{equation} 
\end{theorem}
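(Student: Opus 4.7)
The plan is to recast $\mathcal{F}(x) = 0$ as a fixed-point equation and then apply the Banach contraction principle on the closed ball $\overline{B_{r_0}(0)} \subset S_1$. Since $\mathcal{L}$ is invertible with bounded inverse, the equation $\mathcal{F}(x) = 0$ is equivalent to $\mathcal{L}(x) = -\mathcal{F}(0) - \mathcal{N}(x)$, i.e.\ to $x = T(x)$ where
\begin{equation*}
T(v) := -\mathcal{L}^{-1}\bigl(\mathcal{F}(0) + \mathcal{N}(v)\bigr).
\end{equation*}
The problem then reduces to showing that $T$ is a contraction of $\overline{B_{r_0}(0)}$ into itself.

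First I would verify the self-mapping property. For any $v \in \overline{B_{r_0}(0)}$, using $\mathcal{N}(0) = 0$ together with hypotheses (1)--(3),
\begin{equation*}
\Vert T(v) \Vert_{S_1} \leq C_L \Vert \mathcal{F}(0) \Vert_{S_2} + C_L \Vert \mathcal{N}(v) - \mathcal{N}(0) \Vert_{S_2} \leq \tfrac{r_0}{4} + C_L C_N r_0 \Vert v \Vert_{S_1} \leq \tfrac{r_0}{4} + \tfrac{r_0}{2} < r_0,
\end{equation*}
where I used the constraint $r_0 < 1/(2 C_N C_L)$ to bound $C_L C_N r_0 < 1/2$. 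Next, for the contraction estimate, for any $v_1, v_2 \in \overline{B_{r_0}(0)}$ hypothesis (2) gives
\begin{equation*}
\Vert T(v_1) - T(v_2) \Vert_{S_1} \leq C_L \Vert \mathcal{N}(v_1) - \mathcal{N}(v_2) \Vert_{S_2} \leq C_L C_N r_0 \Vert v_1 - v_2 \Vert_{S_1} \leq \tfrac{1}{2} \Vert v_1 - v_2 \Vert_{S_1}.
\end{equation*}
Since $\overline{B_{r_0}(0)}$ is a complete metric space (as a closed subset of the Banach space $S_1$), the Banach fixed-point theorem produces a unique $x \in \overline{B_{r_0}(0)}$ with $T(x) = x$. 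Unwinding the definition of $T$, this is exactly $\mathcal{F}(x) = \mathcal{F}(0) + \mathcal{L}(x) + \mathcal{N}(x) = 0$. Uniqueness in the open ball $B_{r_0}(0)$ follows from uniqueness of the fixed point of a contraction.

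Finally, to obtain the quantitative estimate \eqref{solutionbound}, I would apply $T$ to the solution itself:
\begin{equation*}
\Vert x \Vert_{S_1} = \Vert T(x) \Vert_{S_1} \leq C_L \Vert \mathcal{F}(0) \Vert_{S_2} + C_L C_N r_0 \Vert x \Vert_{S_1} \leq C_L \Vert \mathcal{F}(0) \Vert_{S_2} + \tfrac{1}{2} \Vert x \Vert_{S_1},
\end{equation*}
and absorb the last term on the left to conclude $\Vert x \Vert_{S_1} \leq 2 C_L \Vert \mathcal{F}(0) \Vert_{S_2}$. The entire argument is a standard application of the contraction mapping theorem, so no substantive obstacle is expected; the only point requiring care is the bookkeeping of the three constants so that the choice of $r_0$ from hypotheses (2) and (3) simultaneously yields the self-map property and the contraction ratio $1/2$.
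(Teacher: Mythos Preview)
Your proof is correct and follows the standard contraction-mapping argument; the paper itself does not prove this theorem, simply stating it as a standard form of the implicit function theorem and proceeding directly to its application. Your bookkeeping of the constants is accurate, and you correctly observe that the self-map estimate in fact gives $\Vert T(v)\Vert_{S_1}<r_0$, so the fixed point lies in the open ball as asserted.
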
 
In our case
\[ S_1 = \left\lbrace \phi \in C^{2,\alpha}(\mathcal{M}), \ \phi \text{ is $S^1$ invariant and } \frac{\partial \phi}{\partial n} \bigg\vert_{\partial M} = 0 \right\rbrace\]
with the norm 
\begin{equation}\label{s1norm}
 \Vert \phi \Vert_{S_1} =   [ \phi  ]_{C^{2,\alpha}_{\delta,\nu,\mu}(\mathcal{M})} + [   \phi ]_{C^{2}_{\delta,\nu,\mu}(\mathcal{M})} + \Vert  \phi  \Vert_{C^{0,\alpha}_{\delta,\nu+2,\mu}(\mathcal{M})}
 \end{equation}
with $\delta$, $\nu$, and $\mu$ as in Theorem \ref{schauderGlobal} and
\[ S_2 = \lbrace f \in C^{0,\alpha}_{\delta,\nu+2,\mu}(\mathcal{M}): \, f \text{ is $S^1$-invariant} \rbrace\]
with the norm
\begin{equation}\label{s2norm} 
\Vert f \Vert_{S_2} = \Vert f \Vert_{C^{0,\alpha}_{\delta,\nu+2,\mu}(\mathcal{M})}.
\end{equation}
Note that these spaces depend on $T$.

For each $T$ we seek a function $\mathcal{F}_T: S_1 \rightarrow S_2$ with the property that if $\mathcal{F}_T(v) = 0$ then $\omega_v = \omega_T + i \partial \bar{\partial} v$ is K\"ahler-Einstein. Recall that in our notation
\[ \text{Ric} \, \omega_T = -i \partial \bar{\partial} \log\bigg( \frac{\chi}{h} \bigg) - \pi^{\ast} \omega_D\]
where $\omega_T$ is the solution constructed in Section \ref{solution} for a fixed $T$ and $\chi$ and $h$ implicitly depend on $T$.  Now say there exists a K\"ahler potential $\phi$ such that 
\[ \omega_T = \pi^{\ast} \omega_D + i \partial \bar{\partial} \phi.\]  
Then
\[ \text{Ric} \, \omega_T + \omega_T = -i \partial \bar{\partial} \bigg(\log\bigg( \frac{\chi}{h}\bigg) - \phi\bigg).\] 
For each $T$ we define the error function to be 
\[ \text{Err}_{KE} = \frac{\chi}{h} e^{-\phi}-1,\]
so 
\[ \text{Ric} \, \omega_T + \omega_T = - i \partial \bar{\partial} \log  (1+\text{Err}_{KE}). \]
But
\begin{align*} 
\text{Ric} \, \omega_v + \omega_v &= \text{Ric} \, \omega_v - \text{Ric} \, \omega_T + \text{Ric} \, \omega_T + \omega_T - \omega_T + \omega_v \\ 
&= - i \partial \bar{\partial} \log \bigg(\frac{\omega_v^2}{\omega_T^2}\bigg) - i \partial \bar{\partial} \log (1+\text{Err}_{KE}) + i \partial \partial v.
\end{align*}
We define
\[ \mathcal{F}_T(v) = -v + \log\left( \frac{\omega_v^2}{\omega_T^2}\right) + \log (1+\text{Err}_{KE}) \]
so that $ \text{Ric} \, \omega_v + \omega_v = - i\partial \bar{\partial} \mathcal{F}(v)$. 

To linearize at $0$, we take $\eta \in T\mathcal{S}^1 \simeq \mathcal{S}^1$ and evaluate
\begin{align*} \mathcal{F}_T(t\eta)-\mathcal{F}(0) &= -t\eta + \log(1+\text{Tr}_{\omega_T} i\partial \bar{\partial} t\eta + \mathcal{O}(t^2)) \\
&= t(-\eta + \Delta_{\omega_T} \eta + \mathcal{O}(t)).
\end{align*}
Thus the derivative of $\mathcal{F}_T$ at $0$ is $\Delta_{\omega_T}-1$. We write
\begin{equation}\label{linearerror} \mathcal{L}_T(v) = (\Delta_{\omega_T}-1)v
\end{equation} and
\begin{equation}\label{nonlinearerror}
\mathcal{N}_T(v) = \log\bigg(\frac{\omega_v^2}{\omega_T^2} \bigg) - \Delta_{\omega_T} v.
\end{equation}

\subsection{Error estimates}

\subsubsection{Nonlinear error estimate} 
First we examine the nonlinear error.
\begin{proposition}\label{nonlinearerrorprop} 
There exists $C_N > 0$ such that for all $T \gg 0$ and $\rho \in (0,\frac{1}{C_N})$, 
\[ \Vert \mathcal{N}_T(v_1) - \mathcal{N}_T(v_2) \Vert_{S_2} \leq C_N \rho \Vert v_1 - v_2 \Vert_{S_1} \]
for all $v_1, v_2 \in \overline{B_\rho(0)} \subset S_1$. 
\end{proposition}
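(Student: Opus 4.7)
The plan is to exploit the fact that $\mathcal{N}_T$ is the purely nonlinear remainder of $\mathcal{F}_T - \mathcal{F}_T(0)$, so that $\mathcal{N}_T(0) = 0$ and, crucially, $(d\mathcal{N}_T)_0 = 0$. A direct computation in complex dimension two gives
\[
(d\mathcal{N}_T)_v(w) = \operatorname{tr}_{\omega_v}(i\partial\bar\partial w) - \operatorname{tr}_{\omega_T}(i\partial\bar\partial w) = \Delta_{\omega_v} w - \Delta_{\omega_T}w,
\]
which vanishes at $v=0$ and, using $g_{v}^{-1} - g_T^{-1} = -g_{v}^{-1}(i\partial\bar\partial v)g_T^{-1}$, grows linearly in $v$ near the origin. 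I would then apply the fundamental theorem of calculus along the segment $v_t = v_2 + t(v_1-v_2)$ to obtain
\[
\mathcal{N}_T(v_1)-\mathcal{N}_T(v_2) = \int_0^1 \bigl(\Delta_{\omega_{v_t}} - \Delta_{\omega_T}\bigr)(v_1-v_2)\,dt,
\]
together with the pointwise bilinear estimate $|(d\mathcal{N}_T)_{v_t}(v_1-v_2)|(q) \leq C|\nabla^2 v_t|_{g_T}(q)\,|\nabla^2(v_1-v_2)|_{g_T}(q)$, valid once $\rho$ is small enough that $\omega_{v_t}$ is uniformly comparable to $\omega_T$.

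Next I would translate this bilinear bound into the weighted norms. From the $C^2_{\delta,\nu,\mu}$ component of the $S_1$-norm one has
\[
|\nabla^2 v_t|_{g_T}(q) \leq \|v_t\|_{S_1}(1+w(q))^{\delta}W(q,T)^{-(\nu+2)}T^{-\mu},
\]
and analogously for $v_1-v_2$. Multiplying the pointwise product by the target weight $\rho^{(\alpha)}_{\delta,\nu+2,\mu}(q) = (1+w)^{-\delta}W^{\nu+2}T^\mu$ absorbs one copy of $(1+w)^{\delta}W^{-(\nu+2)}T^{-\mu}$, leaving a residual factor of $(1+w)^{\delta}W^{-(\nu+2)}T^{-\mu}$ to be bounded on $\mathcal{M}$. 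The heart of the argument is the observation that this residual is uniformly bounded for $T\gg 0$: in the near-singularity region $r_w\lesssim T^{-1}$ it is $\lesssim T^{\nu+2-\mu}$, while at the ends of the $z$-interval the dominant term is $\lesssim T^{\delta-\mu}$, and both are controlled by the hypothesis $\mu>\max(\delta,\nu+2)$ from Theorem~\ref{schauderGlobal}.

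The H\"older seminorm component is handled by writing $(d\mathcal{N}_T)_{v_t}(v_1-v_2)$ in local frames as a product of bounded tensor expressions in $g_{v_t}^{-1}$, $g_T^{-1}$, and the Hessians of $v_t$ and $v_1-v_2$, and then applying the standard product inequality $[fg]_{C^{0,\alpha}} \leq [f]_{C^{0,\alpha}}\|g\|_{C^0}+\|f\|_{C^0}[g]_{C^{0,\alpha}}$ on each ball of rescaled radius $W(q,T)$. Because $v_t$ and $v_1-v_2$ are $S^1$-invariant they lift canonically to the local universal covers used in the definition of the H\"older seminorm, so difference quotients fit into exactly the same framework as the pointwise estimate. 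Integrating in $t\in[0,1]$ and using $\|v_t\|_{S_1}\leq\rho$ all along the segment yields the claimed Lipschitz estimate with a constant $C_N$ independent of $T$.

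The main obstacle I anticipate is not any single inequality but the careful bookkeeping of the weights. Each second derivative of $v$ contributes a factor of $(1+w)^{\delta}W^{-(\nu+2)}T^{-\mu}$, so bilinearity in the Hessians produces two such factors while the target norm supplies only one; the Lipschitz constant is controlled only if the leftover factor is uniformly bounded. The conditions $\nu\in(-2,-3/2)$, $\delta\in(0,\delta_0)$, and $\mu>\max(\delta,\nu+2)$ in Theorem~\ref{schauderGlobal} are dictated precisely to make this bookkeeping close, and the subtlest part of the argument is verifying that the metric-comparison constants hidden in $|\nabla^2 v_t|_{g_T}$ and in the bilinear coefficient $g_{v_t}^{-1}\otimes g_T^{-1}$ remain uniform in $T$, which in turn relies on the local-universal-cover regularity of $g_T$ established in Proposition~\ref{rescaledGeo}.
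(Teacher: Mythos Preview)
Your proposal is correct and follows essentially the same strategy as the paper, with only a cosmetic difference in how the nonlinearity is unpacked. The paper expands $\log(1+x)$ as a power series and bounds each term $\bigl(\Delta_{\omega_T}v+\tfrac{(i\partial\bar\partial v)^2}{\omega_T^2}\bigr)^\ell$ using $|a^\ell-b^\ell|\le|a-b|(|a|+|b|)^{\ell-1}$, whereas you differentiate $\mathcal{N}_T$ directly and integrate $(d\mathcal{N}_T)_{v_t}=\Delta_{\omega_{v_t}}-\Delta_{\omega_T}$ along the segment; both routes reduce to the same bilinear Hessian estimate. The decisive step in either case is the one you identify: the residual weight $(1+w)^{\delta}W^{-(\nu+2)}T^{-\mu}=(\rho^{(2)}_{\delta,\nu,\mu})^{-1}$ is bounded above uniformly in $T$ because $\mu>\max(\delta,\nu+2)$, and the paper makes exactly this observation at the start of its proof. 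Your integral formulation is arguably tidier since it avoids summing a geometric series, while the paper's expansion makes the smallness condition on $\rho$ more explicit; substantively they are the same argument.
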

\begin{proof}
Note that for all choices of parameters satisfying the assumptions of Theorem \ref{schauderGlobal}, $\rho^{(0)}_{\delta,\nu+2,\mu} = \rho^{(2)}_{\delta,\nu,\mu}$ is bounded below on $\mathcal{M}$ independently of $T$. For if $\vert w \vert < 1$, then since $W(q) \geq T^{-1}$,
\[ \rho^{(0)}_{\delta,\nu+2,\mu} \geq 2^{-\delta}T^{\mu-(\nu+2)}\]
which is bounded below since $\mu > \nu + 2$. On the other hand if $\vert w \vert \geq 1$, then 
\[ \rho^{(0)}_{\delta,\nu+2,\mu} \geq 2T^{\mu-\delta},\]
which is bounded below since $\mu > \delta$. This bounds the H\"older norms of higher powers of functions in terms of the H\"older norms of lower powers. 
 
Expanding Equation \ref{nonlinearerror}, we have that 
\[ \mathcal{N}_T(i \partial \bar{\partial} v)  = \log\bigg( 1+\Delta_{\omega_T} v + \frac{(i \partial \bar{\partial} v)^2}{\omega_T^2} \bigg) - \Delta_{\omega_T} v.\]
Now  if $v_i \in B_{\rho}(0) \subset \mathcal{S}_1$ for small $\rho > 0$, we have that
\begin{align*}
\bigg\Vert \frac{(i \partial \bar{\partial} v_1)^2}{\omega_T^2}-  \frac{(i \partial \bar{\partial} v_2)^2}{\omega_T^2} \bigg\Vert_{C^{0,\alpha}_{\delta,\nu+2,\mu}(\mathcal{M})} &\leq C \left( [ \nabla^2 v_1 ]_{C^0} + [ \nabla^2 v_2 ]_{C^0} \right) \, [ v_1-v_2 ]_{C^{2,\alpha}_{\delta,\nu,\mu}(\mathcal{M})} \\
& \leq C(\rho^{(2)}_{\delta,\nu,\mu})^{-1} \rho [ v_1-v_2 ]_{C^{2,\alpha}_{\delta,\nu,\mu}(\mathcal{M})} \\ 
& \leq C\rho [ v_1-v_2 ]_{C^{2,\alpha}_{\delta,\nu,\mu}(\mathcal{M})}
\end{align*}
and 
\begin{align*}
 \bigg\Vert \bigg(\Delta_{\omega_T} v_1 + \frac{(i \partial \bar{\partial} v_1)^2}{\omega_T^2}\bigg)^\ell &- \bigg(\Delta_{\omega_T} v_2 + \frac{(i \partial \bar{\partial} v_2)^2}{\omega_T^2}\bigg)^\ell  \bigg\Vert_{C^{0,\alpha}_{\delta,\nu+2,\mu}(\mathcal{M})}  \\
 &\leq C^\ell \left( [ \nabla^2 v_1 ]_{C^0} + [ \nabla^2 v_2 ]_{C^0} \right)^{\ell-1}  \, [ v_1-v_2 ]_{C^{2,\alpha}_{\delta,\nu,\mu}(\mathcal{M})} \\  
&\leq  C^\ell (\rho^{(2)}_{\delta,\nu,\mu})^{1-\ell} (2\rho)^{\ell-1}\, [ v_1 - v_2 ]_{C^{2,\alpha}_{\delta,\nu,\mu}(\mathcal{M})}\\ 
 &\leq C^\ell (2\rho)^{\ell-1} \, [ v_1 - v_2 ]_{C^{2,\alpha}_{\delta,\nu,\mu}(\mathcal{M})}.
 \end{align*}
The constant $C$ in this inequality does not depend on $\rho$, $\ell$, or $T$. Here we have used the inequality \[\vert a^\ell-b^\ell \vert \leq \vert a-b\vert (\vert a \vert + \vert b \vert)^{\ell-1}.\]  Now since
\begin{align*}
\mathcal{N}_T(i \partial \bar{\partial} v_1) - \mathcal{N}_T(i \partial \bar{\partial} v_2) =  & \frac{(i \partial \bar{\partial} v_1)^2}{\omega_T^2}-\frac{(i \partial \bar{\partial} v_2)^2}{\omega_T^2} \\ 
&+ \sum_{\ell=2}^{\infty} \frac{(-1)^{\ell+1}}{\ell} \bigg(\bigg(\Delta_{\omega_T} v_1 + \frac{(i \partial \bar{\partial} v_1)^2}{\omega_T^2}\bigg)^\ell - \bigg(\Delta_{\omega_T} v_2 + \frac{(i \partial \bar{\partial} v_2)^2}{\omega_T^2}\bigg)^\ell\bigg),
\end{align*}
this estimate gives that
\begin{align*} \Vert \mathcal{N}_T(i \partial \bar{\partial} v_1) &- \mathcal{N}_T(i \partial \bar{\partial} v_2)\Vert_{C^{0,\alpha}_{\delta,\nu+2,\mu}(\mathcal{M})} \\ &\leq C\rho  [ v_1 - v_2 ]_{C^{2,\alpha}_{\delta,\nu,\mu}(\mathcal{M})}+ C\rho\bigg(\sum_{\ell=0}^{\infty} (C\rho)^{\ell} \bigg)\Vert v_1 - v_2 \Vert_{S^1} \\ 
&\leq C\rho \Vert v_1 - v_2 \Vert_{S^1}
\end{align*}
for small enough $\rho$. By the discussion above, the constant in this inequality and the bound on $\rho$ do not depend on $T$.
\end{proof}

\subsubsection{Deriving the K\"ahler potential}
To understand the initial error, we must compute a K\"ahler potential for $\omega_T$. To do this, we follow \cite{sz19} Section 4.2.2. 

Such a potential should be an $S^1$-invariant function on the total space $\mathcal{M}$ such that 
\begin{equation}\label{kahler_potential_defn}
\pi^{\ast} \omega_D + \frac{1}{2}dd^c \phi = \omega_T.
\end{equation}
Then $d^c\phi = d_D^c \phi + \phi_z h^{-1} \Theta$ by $S^1$ invariance and the definition of $\Theta$. Using Equation \ref{omega_defn} and separating components as in Section \ref{derivation_of_system}, Equation \ref{kahler_potential_defn} is equivalent to the system
\begin{equation}\label{potential_eqns}
\left \{ \begin{array}{l} 2 \, \omega_D + d_D d_D^c \phi + \phi_z h^{-1} \partial_z \tilde{\omega} = 2 \, \tilde{\omega} \\
d_D^c \phi_z - \phi_z h^{-1} d^c_D h = 0 \\
d(\phi_z h^{-1}) = 2\, dz \end{array} \right.
\end{equation} 
Integrating the last equation twice yields that \[ \phi_z h^{-1} = 2z + C' \] for a constant $C'$, and so integrating again we find \[ \phi = \int_{z_0}^z h(2u + C') \, du + \phi_0,\]
where $\phi_0$ is a function on $D$. Then the second equation is satisfied as well. Taking the $z$ derivative of the first equation gives the relation
\[ (\Delta_D \phi_z )\,  \omega_D  + (\phi_z h^{-1})_z \partial_z \tilde{\omega} + (\phi_z h^{-1}) \partial_z^2 \omega = 2\, \partial_z \tilde{\omega}.\] 
Using Equation \ref{maineqn2} it is checked that the above choice of $\phi$ solves this equation. Therefore $\phi$ will be a solution if the first equation is satisfied at any $z$ away from $p$. Taking $z = 1$, the equation yields 
\[ 2\, \omega_D  + (\Delta_D \phi_0)\, \omega_D + (2+C') \partial_z \tilde{\omega} =  2\, \tilde{\omega}.\] 
Since $\partial_z^2 \tilde{\omega} = -\Delta_D h$, $\partial_z^2 \int_D \tilde{\omega} = 0$, and so \[ \int_D \tilde{\omega}(1) = \int_D \tilde{\omega}(0) + \int_D \partial_z \tilde{\omega} = \int_D \omega_D + \int_D \partial_z \tilde{\omega}.\]
This implies that $C' = 0$, i.e.
\begin{equation}\label{phiz_linear}
\phi_z h^{-1} = 2z.
\end{equation}

Make the choice $z_0 = 0$. Then to determine $\phi_0$, we use Equation \ref{chi_expansion} to solve the first line of Equation \ref{potential_eqns} on the slice $z = 0$. By Equation \ref{kahler_potential_defn}, we have that
\[ \Delta_D \phi_0 = \frac{1}{2T\vert y \vert} + T^{-1}g^{0,\alpha}. \] 
By elliptic regularity, this implies that
\begin{equation}\label{phi0_def}
\phi_0 = \frac{\vert y \vert}{2 T} + T^{-1}g^{2,\alpha}.
\end{equation} 
In summary,
\begin{equation}\label{phi_summary}
\phi = \int_0^z 2\,hu \, du + \frac{\vert y \vert}{2 T} + T^{-1}g^{2,\alpha}.
\end{equation}
\subsubsection{Linear error estimate}

Now we are ready to estimate $\mathcal{F}_T(0) = \log(1+\text{Err}_{KE})$. We show that $\text{Err}_{KE}$ can be made arbitrarily small in $C^{0,\alpha}_{\delta,\nu+2,\mu}$ by taking $T$ large enough. The analysis looks different near and away from the singularity. 

\begin{proposition}\label{linearerrorprop}
For any $C_L > 0$ and $\rho \in \mathbb{R}_{> 0}$, 
\[ \Vert \mathcal{F}_T(0) \Vert_{C^{0,\alpha}_{\delta,\nu+2,\mu}(\mathcal{M})} \leq \frac{\rho}{4C_L}\]
for large enough $T$. 
\end{proposition}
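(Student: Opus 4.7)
The strategy exploits the fact that $\omega_T$ was constructed to solve the \emph{linearization} \ref{deltah_eqn} of the reduced Kähler-Einstein equation \ref{maineqn1} rather than the equation itself, so the initial error $\mathcal{F}_T(0) = \log(\chi/h) - \phi$ is a pure second-and-higher-order remainder in the perturbations $a := \delta h/h_0$ and $b := \delta\chi$. Using $\partial_z \log h_0 = -2h_0 z$, the identity $\phi_z = 2hz$ from Equation \ref{phiz_linear}, and Equation \ref{linearized_before_deriv} in the form $a_z - b_z = -2z\,\delta h$, a direct computation yields the clean identity
\[
\partial_z \mathcal{F}_T(0) = \frac{a\,a_z}{1+a} - \frac{b\,b_z}{1+b},
\]
which is manifestly second order in $(a,b)$. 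An analogous identity for $d_D\mathcal{F}_T(0)$ follows from combining the first line of \ref{potential_eqns} with Equations \ref{maineqn3} and \ref{phi0_def}. This reduces estimating $\mathcal{F}_T(0)$ to estimating bilinear expressions in $a, b$ and their first derivatives in the weighted Hölder norm.

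To exploit this I would split $\mathcal{M}$ into the three regions of Equation \ref{Wdef} and bound $\mathcal{F}_T(0)$ on each using Proposition \ref{rescaledGeo} together with the explicit data of Sections \ref{solution} and \ref{hcorrection}. In the \emph{far} region ($r_w \geq C_3$ or undefined, so $W \equiv 1$), Proposition \ref{deltah_decay_prop} and the patching of Section \ref{hcorrection} force $(\chi,h)$ to agree with the exact Calabi-model data up to $O(T^{-5}z^{-4})$, and since the Calabi model satisfies \ref{maineqn1} exactly, $|\mathcal{F}_T(0)| = O(T^{-1})$; with weight $T^\mu$ this contributes $O(T^{\mu-1})$. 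In the \emph{intermediate} region ($T^{-1} \lesssim r_w \leq C_3$, $W \sim r_w$), the singular expansions \ref{deltah_expansion_singularpt} and \ref{chi_expansion} give $a - b = w^2/(2Tr_w) + $ lower order, so the cancellation $a^2 - b^2 = (a-b)(a+b) = O(T^{-2})$ (using $|w| \leq r_w$) shows $|\mathcal{F}_T(0)| = O(T^{-2})$; with weight $r_w^{\nu+2}T^\mu$ and $\nu + 2 > 0$ this contributes $O(T^{\mu-2})$. In the \emph{Taub-NUT} region ($r_w \leq T^{-1}$, $W \equiv T^{-1}$), the rescaled metric $\hat\omega_T = W^{-2}\omega_T$ is $C^{2,\alpha}$-close to the Ricci-flat Taub-NUT metric, and the relation $i\partial\bar\partial\mathcal{F}_T(0) = -(\mathrm{Ric}\,\omega_T + \omega_T)$ together with $\omega_T = T^{-2}\hat\omega_T$ gives $|\mathcal{F}_T(0)| = O(T^{-2})$ on unit rescaled balls; weight $T^{\mu-\nu-2}$ then contributes $O(T^{\mu-\nu-4})$, a negative exponent since $\mu < 1$ and $\nu > -2$.

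To upgrade these $C^0$ estimates to the weighted $C^{0,\alpha}$ seminorm, which is computed on local universal covers, I would use that Proposition \ref{rescaledGeo} provides $C^{k,\alpha}$ convergence to the explicit model on each rescaled ball. The Hölder seminorm of $\mathcal{F}_T(0)$ on such a ball is then comparable, up to a uniform constant, to that of its limit expression, so the same regional bounds apply to the seminorm. Summing the three contributions and choosing $T$ sufficiently large then gives $\Vert\mathcal{F}_T(0)\Vert_{S_2} \leq \rho/(4C_L)$.

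The main obstacle is the intermediate region: $a$ and $b$ are individually large (of order $1/(Tr_w) \gg 1$ near $r_w \sim T^{-1}$), and only the $O(w^2/(Tr_w))$ cancellation in $a-b$ forced by the Calabi-model structure produces the needed decay. Carefully tracking how the $g^{2,\alpha}$ and $w^1$ residuals in Equations \ref{deltah_expansion_singularpt}--\ref{chi_expansion} interact with the quadratic identity above, and verifying that the resulting logarithmic factors from the $\mathcal{W}^\ell$ classes are absorbed by the weight, is the technical heart of the argument.
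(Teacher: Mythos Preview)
Your derivative identity $\partial_z\mathcal{F}_T(0) = \frac{a\,a_z}{1+a} - \frac{b\,b_z}{1+b}$ is correct and elegant, but the rates you extract from it are too optimistic, and the Taub-NUT step has a gap. Integrating your identity from $z=0$ (using \ref{linearized_before_deriv} to rewrite $\int 2u\,\delta h\,du$) gives
\[
\mathcal{F}_T(0) \;=\; \bigl[\log(1+b)-\log(1+a)+(a-b)\bigr] \;-\; \bigl[(a-b)\big\vert_{z=0} + \phi_0\bigr],
\]
and the first bracket is indeed second order and $O(T^{-2})$ as you argue. But the integration constant $(a-b)\vert_{z=0}+\phi_0$ is only $O(T^{-1})$: at $z=0$ the leading $\frac{1}{2T|y|}$ singularities of $a$ and $b$ cancel, leaving the $T^{-1}(g^{2,\alpha}+w^1)$ remainders from \ref{deltah_expansion_singularpt} and \ref{chi_expansion}, while $\phi_0 = \frac{|y|}{2T}+T^{-1}g^{2,\alpha}$ from \ref{phi0_def} has no reason to cancel against them. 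So $\mathcal{F}_T(0)=O(T^{-1})$, not $O(T^{-2})$; this still suffices since the weight is at most $T^\mu$ with $\mu<1$, but your stated rates and the ensuing weighted contributions are off. Separately, your Taub-NUT argument via $i\partial\bar\partial\mathcal{F}_T(0) = -(\mathrm{Ric}\,\omega_T+\omega_T)$ bounds only the \emph{Hessian} of $\mathcal{F}_T(0)$ on rescaled balls; to recover the function you must propagate a boundary value from the intermediate region, at which point you inherit the $O(T^{-1})$ rate anyway.

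The paper's proof is more direct and avoids this bookkeeping: it substitutes the explicit expansions \ref{deltah_expansion_singularpt}, \ref{chi_expansion}, and \ref{phi_summary} (for $|w|\le C_2$) or the Calabi-model data of Section~\ref{hcorrection} and \ref{chi_expression_6} (for $|w|\ge C_2$) straight into $\frac{\chi}{h}e^{-\phi}$ and watches the leading singular and polynomial terms cancel algebraically, leaving $1+T^{-1}g^{0,\alpha}$ in each region. Only two regions are needed, no derivative identities, and the $C^{0,\alpha}$ control is built into the $g^{0,\alpha}$ regularity class rather than inferred from Proposition~\ref{rescaledGeo}. Your structural approach explains \emph{why} the error is small by isolating the quadratic remainder, but the direct computation is shorter and sidesteps the integration-constant term where your argument currently slips.
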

\begin{proof}
By the discussion in Section \ref{hcorrection}, there exists $C_2$ independent of $T$ such that Equations \ref{deltah_expansion_singularpt} and \ref{chi_expansion}  hold for all $\vert w \vert < C_2$ up to a smooth correction of order $\mathcal{O}(T)$ for $\delta h$ and $\mathcal{O}(T^{-1})$ for $\delta \chi$. Thus we have that if $r_w \leq C_2$,
\begin{equation}\label{h_error_nearsing} 
h = T^2\bigg( \frac{1}{1+(Tz)^2} + \frac{1}{2Tr_w} +  T^{-1}g^{0,\alpha} \bigg).
\end{equation}
Therefore by Equation \ref{phi_summary},
\[ \phi(z) = \frac{\vert y \vert}{2T} +  \log(1 + (Tz)^2) + T^{-1}g^{0,\alpha} + C'\]
for some $C' > 0$. If we take $C' = -\log(T^2)$, we find
\begin{align*}
\frac{\chi}{h}e^{-\phi} &= \frac{1+ \frac{1}{2Tr_w} + T^{-1} g^{0,\alpha}}{T^2\bigg(\frac{1}{1+(Tz)^2}+ \frac{1}{2Tr_w} + T^{-1} g^{0,\alpha}\bigg)} \frac{  \text{Exp}( -\frac{\vert y \vert}{2T} -C' + T^{-1}g^{0,\alpha})}{1+(Tz)^2} \\ 
&= \frac{1+\frac{1}{2Tr_w} + T^{-1}g^{0,\alpha}}{\frac{1}{1+(Tz)^2}+\frac{1}{2Tr_w} + T^{-1}g^{0,\alpha}} \cdot \frac{1+T^{-1}g^{0,\alpha}}{1+(Tz)^2} \\
&= \frac{1+\frac{1}{2Tr_w} + T^{-1}g^{0,\alpha}}{1+\frac{1}{2Tr_w} + T^{-1}g^{0,\alpha}}(1+T^{-1}g^{0,\alpha}) \\
&= 1+ T^{-1}g^{0,\alpha}.
\end{align*}
The calculations when $r_w \geq C_2$ but still $\vert w \vert \leq C_2$ are simpler and give a similar estimate. Since  $\rho^{(\alpha)}_{\delta,\nu+2,\mu}$ is uniformly bounded above by $T^{\mu}$, we therefore have that
\begin{equation}
\Vert \text{Err}_{\text{KE}} - 1 \Vert_{C^{0,\alpha}_{\delta,\nu+2,\mu}(\lbrace \vert w \vert \leq C_2 \rbrace)} = \mathcal{O}(T^{\mu-1})
\end{equation} 
But by our requirement that $\mu < 1$, this last term decays in $T$. 

Away from the singular point $\chi$, $h$, and $\phi$ are smooth, so it is sufficient to prove a $C^{0}$ bound. 
Integrating Equation \ref{h_expression_6}, we find that for $z < 0$
\begin{align*} \phi &= \phi\bigg(-\frac{C_2}{T}\bigg) + \int_{-\frac{C_2}{T}}^z 2zh \, dz \\
&= \log(1+C_2^2) - \log(T^2) + \int_{-\frac{C_2}{T}}^z \frac{2T^2 z(k_-z+1)}{\frac{2T^2 k_-z^3}{3} + (Tz)^2 + 1} \, dz + \mathcal{O}(T^{-1})\\ 
&=\log(1+C_2^2)- \log(T^2)  + \log\bigg(\frac{2T^2 k_-z^3}{3} + (Tz)^2 + 1\bigg) - \log(1+ C_2^2+ \mathcal{O}(T^{-1})) + \mathcal{O}(T^{-1})  \\
&=  - \log(T^2)   + \log\bigg(\frac{2T^2 k_-z^3}{3} + (Tz)^2 + 1\bigg) + \mathcal{O}(T^{-1}).
\end{align*} 
Therefore by Equation \ref{chi_expression_6},
\begin{align*}
\frac{\chi}{h}e^{-\phi} &=  \frac{1 +  k_- z + \mathcal{O}(T^{-1})}{\frac{1 + k_- z}{\frac{2T^2 k_-z^3}{3} + (Tz)^2 + 1} + \mathcal{O}(T^{-5}z^{-4})} \frac{1 + \mathcal{O}(T^{-1})}{\frac{2T^2 k_-z^3}{3} + (Tz)^2 + 1} \\
&= 1 + \mathcal{O}(T^{-1})
\end{align*}
as desired. The calculations are similar for $z > 0$ since $1+k_+ z$ is bounded below for $z < 1/2$. Note that the $\mathcal{O}(T^{-5}z^{-4})$ decay of $\delta h$ provided by Proposition \ref{deltah_decay_prop} is necessary for the last equality since this term must absorb an $\mathcal{O}(T^2z^2)$ term. 
\end{proof}
\subsection{Proof of Theorems \ref{mainthm} and \ref{submainthm}}
Propositions \ref{nonlinearerrorprop} and \ref{linearerrorprop} allow us to perturb $(\mathcal{M},\omega_T)$ to a K\"ahler-Einstein surface for large enough $T$ via Theorem \ref{implicitfxnthm}.

\begin{proof} (Theorem \ref{mainthm}) Fix $\epsilon > 0$, $R > 0$, and $\alpha < \mu - \text{max}(\delta,\nu+2)$. The operator $\mathcal{L}_T: S_1 \rightarrow S_2$ is invertible for all $T$ and Proposition \ref{schauderGlobal} implies that the inverse is bounded by some $C_L$ independent of $T$. Proposition \ref{nonlinearerrorprop} gives the nonlinear error control for all sufficiently small $r_0$ for some bound $C_N$ and large enough $T$. Fixing some $r_0 < \text{min}(\epsilon,(2C_NC_L)^{-1})$, Proposition \ref{linearerrorprop} gives that 
\[ \Vert \mathcal{F}(0) \Vert_{C^{0,\alpha}_{\delta,\nu+2,\mu}(\mathcal{M})} <\frac{\epsilon}{4C_L} \]
for large enough $T$. Finally, for fixed $x \in \partial \mathcal{M}$, Proposition \ref{rescaledGeo} gives that for large enough $T$, $B_{R}(x) \subset (\mathcal{M},\omega_T)$ is $\epsilon/2$-close in $C^{0,\alpha}$ to a ball in $(\mathcal{C}_{\pm},g_{\mathcal{C}_{\pm}})$. Because the scaling of the cross-section $D$ is bounded in $T$, $T$ can be chosen uniformly over $\partial \mathcal{M}$. 

Choosing $T$ large enough to satisfy all these requirements, Theorem \ref{implicitfxnthm} allows us to find $u$ such that 
\[ \omega = \omega_T + i \partial \bar{\partial} u \]
is K\"ahler-Einstein and 
\[ [ u ]_{C^{2,\alpha}_{\delta,\nu,\mu}(\mathcal{M})} < \frac{\epsilon}{2}.\]
Since 
\begin{equation}\label{alpha_mu_bd}
\rho^{(2+\alpha)}_{\delta,\nu,\mu} \geq CT^{\mu-(\text{max}(\delta,\nu+2)+\alpha)},
\end{equation}
our requirement that $\mu > (\text{max}(\delta,\nu+2)+\alpha)$ implies that for any $x \in \partial \mathcal{M}$ and large enough $T$, $B_{R}(x)$ is $\epsilon$-close in $C^{0,\alpha}$ to a ball in $(\mathcal{C}_{\pm},g_{\mathcal{C}_{\pm}})$.

Since the $\omega_{\text{KE},T}$ are K\"ahler-Einstein with $\lambda = -1$, higher regularity follows from Theorem \ref{einsteinRegularity}. 
\end{proof} 

\begin{proof} (Theorem \ref{submainthm})  Let $(\mathcal{M},\omega_{T_j})$ be the approximate solution and $u_j$ the $S^1$-invariant correction to a K\"ahler-Einstein metric, so 
\[ \omega_{\text{KE},T_j} = \omega_{T_j} + i \partial \bar{\partial} u_j. \] 
The proof of Theorem \ref{mainthm} gives that
\[ \Vert i \partial \bar{\partial} u_j \Vert_{C^{0,\alpha}(\mathcal{M})} = o(1).\]
Thus
\begin{equation}\label{perturbation_scale}  \vert W(x_j)^{-2} i \partial \bar{\partial} u_j(x) \vert_{W(x_j)^{-2}g_{T_j}}  = o(1).
\end{equation}

Now by Proposition \ref{rescaledGeo}, the sequence $(\mathcal{M},W(x_j)^{-2}g_{T_j},x_j)$ subconverges in the pointed Gromov-Hausdorff topology to one of the four desired spaces $(X_{\infty},g_\infty,x_{\infty})$. This means that for any $\epsilon > 0$ and $R > 0$, \[ g_{\text{GH}}(B_R(x_j),B_R(x_{\infty})) \leq \frac{\epsilon}{2} \] for large enough $j$. But by Equation \ref{perturbation_scale}, the correction by $i \partial \bar{\partial} u_j$ changes the distance between any two points by an arbitrarily small amount. For large enough $j$, this implies that \[ g_{\text{GH}}(B_R(x_j),B_R(x_{\infty})) \leq \epsilon.\] 

In the noncollapsing cases (Taub-NUT and Calabi model space), smooth convergence follows from Theorem \ref{einsteinRegularity}. In the collapsing cases ($\mathbb{R}^3$ and $D \times \mathbb{R}$), we can make similar arguments to prove $C^{k,\alpha}$ convergence for any $k$ on local universal covers. Curvature is then bounded on the local universal cover, so it is also bounded under the Riemannian covering map, which is a local isometry.  

\end{proof}

\end{document}